\newtheorem{theorem}{Theorem}[section]
\newtheorem{lemma}{Lemma}[section]
\newtheorem{definition}{Definition}[section]
\newtheorem{corollary}{Corollary}[section]
\newtheorem{proposition}{Proposition}[section]
\theoremstyle{definition}
\newtheorem{remark}{Remark}[section]
\newtheorem{example}{Example}[section]
\newcommand{\intav}[1]{\mathchoice {\mathop{\vrule width 6pt height 3 pt depth  -2.5pt
\kern -8pt \intop}\nolimits_{\kern -6pt#1}} {\mathop{\vrule width
5pt height 3  pt depth -2.6pt \kern -6pt \intop}\nolimits_{#1}}
{\mathop{\vrule width 5pt height 3 pt depth -2.6pt \kern -6pt
\intop}\nolimits_{#1}} {\mathop{\vrule width 5pt height 3 pt depth
-2.6pt \kern -6pt \intop}\nolimits_{#1}}}
\newcommand{\intavl}[1]{\mathchoice {\mathop{\vrule width 6pt height 3 pt depth  -2.5pt
\kern -8pt \intop}\limits_{\kern -6pt#1}} {\mathop{\vrule width 5pt
height 3  pt depth -2.6pt \kern -6pt \intop}\nolimits_{#1}}
{\mathop{\vrule width 5pt height 3 pt depth -2.6pt \kern -6pt
\intop}\nolimits_{#1}} {\mathop{\vrule width 5pt height 3 pt depth
-2.6pt \kern -6pt \intop}\nolimits_{#1}}}
\begin{document}

\title[Inhomogeneous Hopf-Ole\u{\i}nik Lemma and Krylov's boundary gradient estimates]{Inhomogeneous Hopf-Ole\u{\i}nik Lemma and Applications. Part IV: Sharp Krylov Boundary Gradient Type Estimates for Solutions to Fully Nonlinear Differential Inequalities with unbounded coefficients and $C^{1,Dini}$ \\ boundary data}

\author{J. Ederson M. Braga}
   \address{Departamento  de Matem\'atica, Universidade Federal do Cear\'a \newline
Campus do Pici - Bloco 914, CEP 60455-760, Fortaleza, Cear\'a, Brazil.}
   \email{eder\_mate@hotmail.com}

\author{Diego Moreira}
   \address{Departamento  de Matem\'atica, Universidade Federal do Cear\'a \newline
Campus do Pici - Bloco 914, CEP 60455-760, Fortaleza, Cear\'a, Brazil.}
   \email{dmoreira@mat.ufc.br}
  
 \author{Lihe Wang}
 \address{Department of Mathematics, University of Iowa City,  \newline 
   Iowa City, IA 52242, USA}
\email{lwang@math.uiowa.edu}
   
\begin{abstract}
In this paper we provide another application of the Inhomogeneous Hopf-Ole\u{\i}nik Lemma (IHOL) proved in \cite{BM-IHOL-PartI} or \cite{Boyan-2}. As a matter of fact, we also provide a new and simpler proof of a slightly weaker version IHOL  for the uniformly elliptic fully nonlinear case which is sufficient for most purposes.  The paper has essentially two parts. In the first part, we use IHOL for unbounded RHS to develop a Caffarelli's ``Lipschitz implies $C^{1,\alpha}$" approach to prove Ladyzhenskaya-Uraltseva boundary gradient type estimates for functions in $S^{*}(\gamma, f)$ that vanishes on the boundary. Here, unbounded RHS  means that $f\in L^{q}$  with $q>n$. This extends the celebrated Krylov's boundary gradient estimate proved in \cite{Krylov}. A Phragm\'en-Lindel\"of classification result for solutions in half spaces is recovered from these estimates. Moreover, a H\"older estimate up to the boundary (in the half-ball) for $u(x)/x_{n}$ is obtained.  In the second part, we extend the previous results for functions in $S^{*}(\gamma, \sigma, f)$ where $\gamma,f\in L^{q}$ with $q>n$ that have a $C^{1,Dini}$ boundary data on a $W^{2,q}$ domain.  Here, we use an ``improvement of flatness" strategy suited to the unbounded coefficients scenario.  As a consequence of that, a quantitative version of IHOL under pointwise $C^{1,Dini}$ boundary regularity is obtained.  \end{abstract}

\thanks{Corresponding author: Diego Moreira - dmoreira@mat.ufc.br}
\keywords{Hopf Lemma, Krylov gradient estimates, Boundary gradient estimates}
\subjclass[2010]{35B65, 35D40, 35J25, 35J60}
\maketitle


\section{Introduction}
We start by recalling a celebrated result due to N. Krylov.
\begin{theorem}[Krylov, \cite{Krylov}]\label{krylov-trundiger-statements-intro} Let $u\in W^{2,n}(B_{R_{0}}^{+})\cap C^{0}(\overline{B}_{R_{0}}^{+})$ be a strong solution to $Lu=Trace(A(x)D^2u)=f$ in $B_{R_{0}}^{+}$ and $u=0$ in $B'_{R_{0}}:=\partial B_{R_{0}}^{+}\cap\big\{x_{n}=0\}$ where $A$ is a $(\lambda, \Lambda)\footnote{This means that $A$ is a symmetric matrix of order $n$ such that $ \lambda|\xi|^{2}\leq \langle A(x)\xi, \xi \rangle \leq \Lambda|\xi|^{2} \quad \forall x\in B_{1}^{+}, \ \ \forall \xi\in\mathbb{R}^{n}.$}-$uniformly elliptic matrix  of order $n$ and $f\in L^{\infty}(B_{R_{0}}^{+})$. Then, for any $r\leq R_{0},$ we have  
\begin{equation}\label{original-Krylov-oscillation-estimate}
\underset{B_{r}^{+}}{~osc ~}\bigg(\frac{u}{x_{n}}\bigg)\leq C\bigg(\frac{r}{R_{0}}\bigg)^{\alpha}\Bigg(\underset{B_{R_{0}}^{+}}{~osc ~}\bigg(\frac{u}{x_{n}}\bigg) + ||f||_{L^{\infty}(B_{R_{0}}^{+})}\Bigg)
\end{equation}
where $\alpha\in(0,1)$ and $C>0$  are universal constants depending only on $n, \lambda, \Lambda.$
\end{theorem}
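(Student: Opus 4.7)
The plan is to establish a dyadic oscillation decay for $v := u/x_n$ by combining a boundary dichotomy argument with the Inhomogeneous Hopf-Ole\u{\i}nik Lemma (IHOL). After a standard scaling, one may assume $R_0 = 1$. Writing $M(r) = \sup_{B_r^+} v$, $m(r) = \inf_{B_r^+} v$, and $\omega(r) = M(r) - m(r)$, the target is the iteration inequality
\begin{equation*}
\omega(r/2) \leq \theta\, \omega(r) + C\, r\, \|f\|_{L^{\infty}(B_1^+)}
\end{equation*}
for some universal $\theta \in (0,1)$ and all $r \in (0,1]$; the H\"older decay \eqref{original-Krylov-oscillation-estimate} then follows by a standard discrete iteration with exponent $\alpha = \log_2(1/\theta)$.

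To prove the iteration inequality, fix $r \leq 1$ and introduce the non-negative auxiliary functions
\begin{equation*}
w_1(x) := M(r)\, x_n - u(x), \qquad w_2(x) := u(x) - m(r)\, x_n,
\end{equation*}
both of which vanish on the flat portion $B'_r = B_r \cap \{x_n = 0\}$. Since $D^2 x_n \equiv 0$ and $L$ is linear, $L w_1 = -f$ and $L w_2 = f$, so $|L w_i| \leq \|f\|_{L^\infty}$. At the point $x_\star := (r/2)\, e_n$ one has $w_1(x_\star) + w_2(x_\star) = \omega(r) \cdot r/2$, so (without loss of generality) $w_1(x_\star) \geq \omega(r)\cdot r/4$.

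Next I would invoke IHOL on $w_1$: a non-negative strong solution of an elliptic equation with bounded inhomogeneity, vanishing on $B'_r$ and bounded below by $\omega(r)\, r/4$ at an interior point of distance $r/2$ from $B'_r$, satisfies a quantitative linear-growth estimate of the form
\begin{equation*}
w_1(x) \geq c\, \omega(r)\, x_n - C\, r\, \|f\|_{L^\infty}\, x_n, \qquad x \in B_{r/2}^+,
\end{equation*}
for universal constants $c, C > 0$. Dividing by $x_n$ gives $v(x) \leq M(r) - c\, \omega(r) + C\, r\, \|f\|_{L^\infty}$ on $B_{r/2}^+$; combined with $m(r/2) \geq m(r)$, this yields the desired decay with $\theta = 1 - c$. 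The symmetric case (when $w_2(x_\star)$ is the larger term) controls $m(r/2)$ from below by the same argument applied to $w_2$.

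The main obstacle is extracting the quantitative linear-growth lower bound in the Hopf-Ole\u{\i}nik step with the inhomogeneous remainder tracked cleanly as $O(r\|f\|_{L^\infty})$ rather than an unspecified constant; this is precisely the content of IHOL developed in the earlier parts of this series, and it is what enables the dichotomy-iteration to close with a universal $\theta \in (0,1)$. The remaining ingredients---the scaling reduction, the additive decomposition $w_1 + w_2 = \omega(r)\, x_n$, and the dyadic geometric-series iteration producing $\alpha$---are then standard.
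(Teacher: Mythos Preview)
Your proposal is correct and follows essentially the same approach the paper uses for its generalization (Theorem~\ref{boundary krylov thm Lq version}, proved via Propositions~\ref{key-lemma} and~\ref{iteration-scheme}): a dichotomy on the value at the interior point $(r/2)e_n$, application of the inhomogeneous boundary Hopf--Ole\u{\i}nik lemma (Proposition~\ref{Boundary Behaviour ffb}) to whichever of $M(r)x_n-u$ or $u-m(r)x_n$ is large there, and dyadic iteration of the resulting wedge-closing inequality. The paper first normalizes to $0\le u\le x_n$ and then iterates on the aperture $B_k-A_k$, whereas you work directly with $M(r),m(r)$, but this is only a cosmetic difference.
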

One can prove that the estimate above actually implies the existence of the classical gradient of the solution on the flat boundary $B'_{R_{0}}$ and that the gradient is H\"older continuous there. As a matter of fact, the following estimate holds
\begin{equation}\label{modulus-continuity-gradient-krylov}
\underset{B_{r}^{+}}{~osc ~} |\nabla u(x',0)| \leq C\bigg(\frac{r}{R_{0}}\bigg)^{\alpha}\Bigg(\underset{B_{R_{0}}^{+}}{~osc ~}\bigg(\frac{u}{x_{n}}\bigg) + ||f||_{L^{\infty}(B_{R_{0}}^{+})}\Bigg).
\end{equation}
\indent Krylov's result is indeed impressive. It is known from the Krylov-Safonov theory in \cite{KS1,KS2} and \cite{Sa} that solutions of uniformly elliptic equations with bounded measurable coefficients are at most H\"older continuous inside the domain and thus the classical gradient may not even exist in the interior. Krylov's result lines up with the observation that solutions to nondivergence type equations tend to behave better on the boundary. 

Now, we start describing the goals of this paper. The first one (which is the central one) is to extend Krylov's result above for up to the boundary continuous $L^{n}-$viscosity solutions to the following Dirichlet problem 
\begin{eqnarray}
\mathcal{M}_{\lambda, \Lambda}^{+}(D^2u) +\gamma(x)|\nabla u| +\sigma(x)u &\geq &-|f| \quad \textnormal { in } \ \Omega \\ \label{DI+}
\mathcal{M}_{\lambda, \Lambda}^{-}(D^2u) -\gamma(x)|\nabla u| +\sigma(x)u &\leq &|f| \quad \textnormal { in } \ \Omega \\ \label{DI-}
u&=&\varphi \ \textnormal { on } \ \partial\Omega. \label{bdry-data-intro}
\end{eqnarray}
\noindent Here,  $0\leq \gamma \in L^{q}(\Omega)$ and $ \sigma, f\in L^{q}(\Omega)$ with $q>n.$ Moreover, $\varphi$ is a  $C^{1,Dini}$ boundary data along $\partial\Omega$ where $\Omega\subset\mathbb{R}^{n}$ is a bounded $W^{2,q}$ domain. This paper can be divided into two macro parts that we now start to describe. In the first one, we deal with Krylov's result in  the zero boundary data case (Theorem \ref{boundary krylov thm Lq version}). Two ingredients here come into play, namely, Lipschitz estimates up to the boundary and the Inhomogeneous Hopf-Ole\u{\i}nik Lemma that we cal IHOL for short. The interplay of these estimates (properly normalized and applied to every scale) allows us to implement a L. Caffarelli ``Lipschitz implies $C^{1,\alpha}$"  type approach that renders Krylov's result. In a certain way, the ``Lipschitz implies $C^{1,\alpha}$" approach developed here can be thought as the analogue (for this context) to the free boundary regularity theory developed by L. Caffarelli in \cite{C1}. 

IHOL was proven in \cite{BM-IHOL-PartI} for fully nonlinear and quasilinear type as well as uniformly elliptic fully nonlinear equations.  We point out that an earlier and slightly stronger version of IHOL for the uniformly elliptic fully nonlinear case is due to B. Sirakov. To the best of our knowledge, this stronger version was first stated in \cite{Boyan-1} and a full proof appeared quite recently in \cite{Boyan-2} (see Theorems 2 and 11 in \cite{Boyan-2}).  As a matter  of fact, the papers \cite{Boyan-1, Boyan-2} contain quite nice new estimates. 

In this paper, Krylov's boundary gradient type estimates concern only uniformly elliptic fully nonlinear equations. For this context only, we present a simpler and alternative proof of a slightly weaker version of IHOL found in \cite{Boyan-1, Boyan-2}  ~(see Theorem \ref{IHOL} and Remark \ref{IHOL-supersolutions} below) which is the same version found in \cite{BM-IHOL-PartI}. In reality, this version of IHOL is sufficient for our purposes here. This is the second goal of this paper. We highlight that this alternative proof is elementary in nature and of geometric flavour. It relies on the construction of barriers for the Pucci extremal operators with unbounded RHS. These barriers enjoy some geometric properties that easily yield simple proofs of IHOL as well as the Lipschitz estimates up to the boundary\footnote{We observe that Theorem 3 in \cite{Boyan-2} also gives the Lipschitz estimates up to the boundary for bounded solutions.}. The ideas of the proofs are transportable to other situations. In fact, these Pucci barriers  are of multiple use and thus interesting on their own right. 

We remark that Lipschitz type estimates up to the boundary obtained here are sharp in the sense that they do not hold for $q=n$ (see section 9). Moreover, Krylov's $C^{1,\alpha}$ type estimates allow us to recover a classification result of Phragm\'en-Lindel\"of type for solutions to homogeneous equations in half-spaces (see Remark \ref{PLT}). We also point out that Krylov's $C^{1,\alpha}$ type estimates along the boundary obtained here also implies a H\"older control up to the boundary for $u(x)/d(x)$ where $d(x)$ represents the distance to the boundary for the unbounded RHS case.

In the second part, we discuss Krylov's result under the $C^{1,Dini}$ (nonzero) boundary data case (Theorem \ref{general-pointwise-boundary-krylov-general-boundary-data}). Here, the previous strategy of ``Lipschitz implies $C^{1,\alpha}$" becomes more delicate to implement since one has to account for the ``wiggling" oscillation of the boundary data.  Here, we follow an ``improvement of flatness" type method (see for instance the one implemented in \cite{SS} for the bounded coefficients case). In our case however, differently from \cite{SS}, the differential inequalities we deal with do not have an ``envelope class", i.e, if $\gamma, f\in L^{\infty}$ then $S^{*}(\gamma; f) \subset S^{*}(||\gamma||_{L^{\infty}}; ||f||_{L^{\infty}}).$ Furthermore, in our case the iteration becomes more delicate since now the Dini character of the boundary data is indeed what drives the convergence of the tangent plane approximations at every scale. Here, we need some additional assumptions on the modulus of continuity that are discussed in the next section. We mention however that they seem to be  weaker and more natural than the ones presented by J. Kovats in \cite{Kovats-1, Kovats-2} once they are more aligned with the conditions that appeared earlier in the classical works of G. Lieberman, K. Widman and M. Borsuk (see Remark \ref{comments-beta-compatibility}). Our estimates hold in $W^{2,q}$ domains by ``flattening out" the boundary type arguments. We leave the details of these computation to the readers. As a consequence of that, we prove a version of IHOL under $C^{1,Dini}$ boundary data  regularity (see Corollary \ref{hopf-C-1-alpha-tangentially}) which complements Theorem \ref{IHOL}.  We also observe that the $C^{1,\alpha}$ estimates for the zero boundary data case presented here (first part) are in a sharper form when compared to the general boundary data case obtained in the second part.\\

Now, we mention some historical accounts and recent developments related to Krylov's result with no intention of being complete or exhaustive. Shortly after the Krylov's paper \cite{Krylov}, M. Safonov in \cite{Sa-simp} and L. Caffarelli (unpublished work) simplified Krylov's original proof.  To the best of our knowledge, L. Caffarelli's simplification appeared for the first time in J. Kazdan's book \cite{Ka}. 

The statement of Theorem \ref{krylov-trundiger-statements-intro} above is taken from Theorem 9.31 in D. Gilbarg and N. Trudinger's book \cite {GT} (which like \cite{Ka}) incorporates L. Caffarelli's argument on Krylov's proof. 
 As described at the end of Chapter VII in \cite{Lieberman-Book},  Krylov studied the quotient $u(x)/x_{n}$ by introducing new variables to the problem. L. Caffarelli's idea was to look directly to $u$ $($or even, perturbations of $u$ by linear functions, i.e, $u-Ax_{n}).$ This allows the elimination of the Krylov's added independent variables. The quotient $v(x):=u/x_{n}$ is proven to satisfy a Harnack type inequality.  This proof has a geometric flavour and it depends on the construction of a clever comparison barrier (a quadratic polynomial with precise curvature control in orthogonal directions)\footnote{Another instance where beautiful geometrical considerations implied Harnack type inequalities appeared in an earlier paper (\cite{Serrin}) due to J. Serrin. This was brought to our attention by L. Caffarelli.}. This construction seems, in principle,  delicate to reproduce in the unbounded RHS case.  Caffarelli-Krylov's approach fostered a number of interesting variants like, for instance, in \cite{Lieberman-CPDE, Lieberman-PISA} due to G. Lieberman (see section 5 in \cite{Lieberman-CPDE} and section 4 in \cite{Lieberman-PISA}). We indicate here the clear presentation of Caffarelli-Krylov's approach in Theorem 1.2.16 in the first chapter of the nice and recent book \cite{QH} written by Q.~Han.

 Recently,  the remarkable paper \cite{LU-Steklov}\footnote{It seems that results from this paper were obtained earlier by the same authors in a preprint. There is a nice note \cite{U-ICM} that contains precisely the statement of the results in \cite{LU-Steklov}.} due to O. Ladyzhenskaya and N. Uraltseva came to our knowledge. There, the authors extend Krylov's result by considering strong solutions to differential inequalities involving second order quasilinear equations in nondivergence form with lower order terms with coefficients in $L^{q}$ for $q>n.$
 
  In \cite{LU-Steklov}, the authors used barriers for homogeneous equations to explore boundary estimates for solutions to the inhomogeneous problem. They used ABP estimate that allows one to compare the homogeneous barriers with the actual solutions to the inhomogeneous problems. Lipschitz type estimates on the boundary are obtained by developing a quite delicate iteration scheme. An ``oscillation decay" type estimate is also needed. This is done by the use of barriers that are ``pieces of the fundamental solution type", i.e,  of the form $C_{1}+C_{2}|x|^{-\alpha}$ and a  Landis boundary growth type lemma for the quotient $v(x)=u(x)/x_{n}$. We remark that  their results also apply to $W^{2,q}-$domains. 
 
  We highlight the nice paper of B. Barcelo, L. Escauriaza and E. Fabes \cite{BEF}, where Krylov's boundary gradient type estimates were obtained for solutions to linear 2nd order uniformly elliptic equations in nondivergence form (like the one in Theorem \ref{krylov-trundiger-statements-intro}) via estimates on the Green's function. Some of our results here are extensions of the estimates in \cite{BEF} for the fully nonlinear case involving unbounded coefficients. We use however a completely different approach here.
  
   Another development on Krylov's result was done in the third author thesis. He extended Krylov's result for the fully nonlinear parabolic equations $(u\in S^{*}(g))$ in the case the RHS $g$ belongs to $L^{q}$ with $q>n+1$ (parabolic case) and the boundary and the boundary data are pointwise $C^{1,\alpha}$. There, it was developed an iteration scheme that finds a linear approximation for the solution at every dyadic scale. In order to estimate the decay of the Lipschitz constant, Gaussian type barriers were used. In fact, the strategy in \cite{Lihe-CPAM-II} is quite delicate and involves a combination of these type of barriers. Some of them are ``tilted" in order to capture at the same time the oscillation of the boundary and the boundary data at small scales. The iteration process then goes on by using the ABP estimate to measure the ``deviation" or the ``error" in the linear approximation with respect to the solution when one goes from one scale to the next. Roughly speaking, these ``errors" are controlled by the RHS, oscillations of the boundary data and boundary. Since $q>n+1$ and boundary data and boundary oscillates in a $C^{1,\alpha}$ fashion, these accumulated errors ``pile up". Krylov's $C^{1,\alpha}$ (pointwise) boundary estimates were proven on the lateral boundary. 
   
   It seems to us that Krylov's boundary gradient type estimates for viscosity solutions to fully nonlinear (parabolic) equations with unbounded RHS in (pointwise) $C^{1,\alpha}$ domains with (pointwise) $C^{1,\alpha}$ boundary data first appeared in the third author thesis (Theorem 2.1 in \cite{Lihe-CPAM-II}). By using the same type of ideas  present in \cite {Lihe-CPAM-II}, the third author and F. Ma studied the elliptic case with unbounded RHS and (pointwise) $C^{1,Dini}$ boundary and boundary data in \cite{Lihe-Ma} .
   
    In the papers \cite{Lihe-CPAM-II, Lihe-Ma} lower order terms were not considered. Furthermore, although the regularity results were proven in some detail, the gradient estimates were not written in the most precise way. A careful inspection in the proofs in \cite{Lihe-CPAM-II, Lihe-Ma} reveals that a simpler iteration scheme towards Krylov's result can be obtained for the case where the RHS in $L^{\infty}$ (at least in the zero boundary data case).  The simpler iteration scheme appears since the use of the  ABP estimate to control the error between scales is no longer necessary in the proof. In fact, ABP can be directly replaced by a more precise estimate, namely, the Hopf-Ole\u{\i}nik Lemma (HOL).  As a matter of fact, this observation was indeed the main motivation for the development of the method in \cite{Lihe-CPAM-II, Lihe-Ma}.  Now, under the possession of IHOL, we can simplify some of the delicate arguments in \cite{Lihe-CPAM-II} and obtain precise gradient estimates in a clear and direct way.

  The ideas surrounding Krylov's result still permeates the field of nonlinear elliptic and parabolic PDEs. To mention some recent and important examples, O. Savin and N.Q. Le proved  nice results on affine analogues of Krylov's and Ladyzhenskaya-Uraltseva's $C^{1,\alpha}$ results for the linearized Monge-Amp\`ere equation in \cite{Savin-Le-1, Savin-Le-2}. Even more recently, X. Ros-Oton and J. Serra followed some ideas of the original Caffarelli-Krylov's approach in order to prove Krylov's result in the context of nonlinear Integro-Differential operators with bounded RHS\footnote{As a matter of fact, we obtained (for the unbounded RHS case) a similar estimate for the H\"older norm up to the boundary for $u(x)/x_{n}$ as done in \cite{R-O-Se-1, R-O-Se-2} (see estimate (\ref{krylov-uraltseva-oscillation-estimate})). In fact,  for that matter, we used some nice ideas from \cite{R-O-Se-1, R-O-Se-2}.}. This is the content of the excellent papers \cite{R-O-Se-1, R-O-Se-2}. We suspect that some ideas in the present  paper may eventually be useful to explore estimates in the Integro-Differential operators with unbounded RHS setting.

Our paper is organized as follows: In section 2, we introduce some notation and section 3 is devoted to preliminares and definitions of the special classes of modulus of continuity considered here. Section 4 is destined to present the main results of this paper. In section 5, we introduced the structural conditions required for the PDEs. In section 6, we provide examples and properties of the modulus of continuity considered here. The purpose of section 7 is to present the alternative proof of IHOL and the new construction of the inhomogeneous Pucci barriers for the fully nonlinear case. In section 8, we give the proofs of Lipschitz estimates and IHOL on flat boundaries, namely, Propositions \ref{bdry-lip-type-estimate-scaled-version} and \ref{Boundary Behaviour ffb} respectively. In section 9, we discuss an  example that shows that the estimates we proved are sharp with respect to the RHS. ~Section 10 deals with the ``Lipschitz implies $C^{1,\alpha}$" approach to prove Ladyzhenskaya-Uraltseva estimates for the class $S^{*}(\gamma;f)$ in the zero boundary data case. In Section 11, we state and prove a new version of the ``Improvement of flatness Lemma" for the unbounded coefficients case. In sections 12, 13 and 14,   we prove the Krylov's result under the $C^{1,Dini}$ boundary data regularity assumptions, i.e, the proof of Theorem \ref{general-pointwise-boundary-krylov-general-boundary-data} and its Corollaries. In the Appendix, for completeness, we present some lemmas that related pointwise Taylor's expansion and $C^{1,\omega}$ regularity. These estimates are known (specially in the $C^{1,\alpha}$ case) but  it is not so easy to find a reference for their proofs.

\section{Notation}
\begin{itemize}
	\item $n\geq 2$ indicates the dimension of the Euclidean space.\vspace{.2cm}
	\item If $\Omega \subseteq {\mathbb{R}}^n$ we set $\Omega^+ := \Omega \cap \Big\{ (x_1, \cdots, x_n) \in {\mathbb{R}}^n : x_n \geq 0 \Big\}$. \vspace{2mm}
	\item$|A|$ is the $n-$dimensional Lebesgue measure of the set $A$.\vspace{.2cm}
	\item We denote sometimes $x\in\mathbb{R}^{n}$ as $x=(x',x_{n}),$ where $x'=(x_{1},\cdots, x_{n-1})\in\mathbb{R}^{n-1};$\vspace{.2cm}
	\item $H_{n-1}:=\partial\mathbb{R}_{+}^{n}:=\big\{(x',0); x'\in\mathbb{R}^{n-1} \big\};$ \vspace{.2cm}
	\item  $B_{r}(x_{0})=\big\{x\in \mathbb{R}^{n}; |x-x_{0}|<r\big\};$\vspace{.2cm}
	\item For $x_{0}\in H_{n-1}$, \ $B'_{r}(x_{0}):=\big\{x=(x',0); |x'-x_{0}|<r\big\}; $ \vspace{.2cm}
	\item $B'_{r}=B'_{r}(0), \quad B_{r}=B_{r}(0);$
	\end{itemize}
\section{Definitions and Preliminares}
\begin{definition}\label{definition-MC} A modulus of continuity is a nondecreasing continuous function $\omega:[0,\delta_{\omega}]\to[0,\infty)$ such that  $\omega(0)=0$ and $\omega(t)>0$ for $t>0$. Here $\delta_{\omega}\in (0,1].$ Additionally, we say 
\begin{itemize}
\item[$a)$] $\omega$ satisfies the $Q-$decreasing quotient property for $Q\geq 1$ if for $0\leq h\leq r\leq \delta_{\omega},$
$$q(r)\leq Q \cdot q(h),  \quad \textnormal {where }\quad q(t):=\frac{\omega(t)}{t}\quad \textnormal { is defined for } \ t\in(0,\delta_{\omega}].$$ 
\item[$b)$] $\omega$ is Dini continuous if 
$$ \int_{0}^{\delta_{\omega}}\frac{\omega(t)}{t}dt <\infty; $$
\item[$c)$] $\omega$ has the $\beta-$compatibility property between scales for some $\beta\in(0,1)$ if 
$$ \exists \ \delta_{\omega}^{*}\in (0,1] \quad \textnormal { so that } $$
$$ \forall \mu\in\big(0,\delta_{\omega}^{*}), \quad \mu^{\beta}\cdot \omega(\mu^{k}\cdot\delta) \leq \omega(\mu^{k+1}\cdot\delta) \quad\forall \delta\in(0,\delta_{\omega}] \ \textnormal { and } \ \forall k\geq k_{0} \ \textnormal { where } k_{0}\in\mathbb{N}. 
$$
 \end{itemize}
We indicate that a modulus of continuity $\omega$ satisfies all the properties above by writing $\omega\in\mathcal{DMC}(Q,\beta).$
\end{definition}
\noindent See also Remark \ref{comments-beta-compatibility} for the comments on $\beta-$compatibility condition. Also, we refer to section 7, where examples of modulus of continuity in the class $\mathcal{DMC}(Q,\beta)$ are given.
\begin{definition}\label{defining-pointwise-c1-dini} Let $u:B_{r}\to\mathbb{R}$ be a bounded function,  $x_{0}\in B_{r}$ and $\omega$ a modulus of continuity. We say that $u\in C^{1,\omega}(x_{0})$ if there exist an affine function $L_{x_{0}}$ such that 
 \begin{equation}\label{C^{1,alpha}-pointwise}
[u]_{C^{1,\omega}}(x_{0}):= \sup\limits_{x\in B_{r}\atop {0<|x-x_{0}|\leq \delta_{\omega}}} \frac{|u(x) - L_{x_{0}}(x)|}{|x-x_{0}|\omega(|x-x_{0}|)} <\infty.
 \end{equation}
\noindent It is easy to verify that the affine function $L_{x_{0}}$ is unique and 
$$ |u(x) - L_{x_{0}}(x)|\leq [u]_{C^{1,\omega}}(x_{0})|x-x_{0}|\omega(|x-x_{0}|) \quad \forall x\in B_{r} \ \textnormal { such that } \ |x-x_{0}|\leq \delta_{\omega}.
 $$
  \noindent We define the (first order) Taylor's polynomial of $u$ at $x_{0}$ to be the affine function $L_{x_{0}}$. We then write, 
 \begin{equation}\label{taylor-polynomial}
 L_{x_{0}}(x):= \nabla u(x_{0})\cdot (x-x_{0})+ u(x_{0}), \quad x\in\mathbb{R}^{n}.
 \end{equation}
and set 
 \begin{equation}\label{pointwise-C1-dini-norm}
||u||_{C^{1,\omega}(x_{0})}:= |u(x_{0})| + |\nabla u(x_{0})| + [u]_{C^{1,\omega}(x_{0})}.
\end{equation}
\end{definition}
\begin{remark}\label{taylor-on-the-fiber} Assume that $\varphi:B'_{r}\to\mathbb{R}.$ We define $\varphi\in C^{1,\omega}(x_{0})$  where $x_{0}\in B'_{r}$  exactly as in the previous definition imposing only that the affine function $L$ over $\mathbb{R}^{n}$  satisfies additionally that ${\partial L_{x_{0}}}/{\partial x_{n}}\equiv 0.$ Observe that in this case, (\ref{taylor-polynomial}) and (\ref{pointwise-C1-dini-norm}) are defined likewise and that now $\nabla\varphi(x_{0})\in \mathbb{R}^{n-1}\times\{0\}.$
\end{remark}
\begin{definition}  Let $u:B_{r}\to\mathbb{R}$ be a bounded function, $x_{0}\in B_{r}$ and $\alpha \in(0,1].$ We say that  $u\in C^{1,\alpha}(x_{0})$ if 
   \begin{equation}\label{C^{1,alpha}-pointwise}
[u]_{C^{1,\alpha}}(x_{0}):= \sup\limits_{x\in B_{r}} \frac{|u(x) - L_{x_{0}}(x)|}{|x-x_{0}|^{1+\alpha}} <\infty.
 \end{equation}
 \begin{equation}\label{C^{1,alpha}-semi-norm-pointwise-weighted}
[u]_{C^{1,\alpha}}^{*}(x_{0}):= r^{1+\alpha} \cdot [u]_{C^{1,\alpha}}(x_{0})
 \end{equation}
 \begin{equation}\label{weighted-pointwise-c1-alpha-norm}
||u||_{C^{1,\alpha}(x_{0})}^{*}:= |u(x_{0})| + r\cdot |\nabla u(x_{0})| + r^{1+\alpha}\cdot [u]_{C^{1,\alpha}(x_{0})}
\end{equation}
\end{definition}
\begin{remark} We observe that the concepts introduced above coincide. Indeed, 
$$  \sup\limits_{x\in B_{r}\atop {|x-x_{0}|\leq \delta_{\omega}}} \frac{|u(x) - L_{x_{0}}(x)|}{|x-x_{0}|^{1+\alpha}}\leq [u]_{C^{1,\alpha}}(x_{0}) \leq \sup\limits_{x\in B_{r}\atop {|x-x_{0}|\leq \delta_{\omega}}} \frac{|u(x) - L_{x_{0}}(x)|}{|x-x_{0}|^{1+\alpha}} + \Bigg(\frac{1}{\delta_{\omega}}\Bigg)^{1+\alpha} \Big(||u||_{L^{\infty}(B_{r})} + ||L_{x_{0}}||_{L^{\infty}(B_{r})}\Big).$$
\end{remark}
 \begin{remark}\label{classical-dini-taylor} Let $\omega$ be a modulus of continuity. We recall that $u\in C^{1,\omega}({B}_{r})$ if $u\in C^{1}(B_{r})$ and  $$[\nabla u]_{C^{0,\omega}(B_{r})}=\sup\limits_{x, y\in B_{r}, x\neq y \atop {|x-y|\leq \delta_{\omega}}} \frac{|\nabla u(x)-\nabla u(y)|}{\omega(|x-y|)} < \infty.
$$ By classical Taylor's expansion, it is easy to see that for any $x_{0}\in \overline{B}_{r/2}$ we have
$$ |u(x) -u(x_{0}) -\nabla u(x_{0})\cdot(x-x_{0})| \leq [\nabla u]_{C^{0,\omega}(\overline{B}_{r})} |x-x_{0}|\omega(|x-x_{0}|) \quad \forall x\in \overline{B}_{\min\{r/2, \delta_{\omega}\}}(x_{0}).$$
\end{remark}
\noindent Now,  we introduce  the following notation: If  $V:\Omega\to\mathbb{R}^{n}$ is a $C^{1,\kappa}(B_{r})$ vector field with $0<\kappa \leq 1$
$$||V||_{C^{0,\kappa}(B_{r}^{+})} ^{*} = ||V||_{L^{\infty}(B_{r}^{+})} + r^{\kappa} \cdot [V]_{C^{0,\kappa}(B_{r}^{+})},$$
$$[V]_{C^{0,\kappa}(B_{r}^{+})}=\sup\limits_{x,y\in B_{r}^{+}\atop {x\neq y}} \frac{|V(x)-V(y)|}{|x-y|^{\kappa}}.
$$
If $\varphi \in C^{1,\kappa}({B}_{r})$  where $\kappa\in(0,1]$, we denote 
$$ || \varphi ||_{C^{1, \kappa}(B_{r})}^{*}= ||\varphi||_{L^{\infty}(B_{r})} + r\cdot  ||\nabla \varphi||_{L^{\infty}(B_{r})} + r^{1+\kappa}\cdot [\nabla\varphi]_{C^{0,\kappa}(B_{r}^{+})}.$$

\begin{definition} Let $a, b, c>0.$ We set the following notation
\begin{equation}  \min\big\{a, b^{-}\big\}:= \left \{
\begin{array}{ll}\label{} 
 a & \textnormal { if } \quad a < b, \\\\
 \kappa &  \textrm{ for any } \kappa \in (0,b) \ \textnormal { if } \ b\leq a.
\end{array}
\right.
\end{equation}
Additionally, 
$$\min\big\{a,b,c^{-}\big\}:= \min\big\{\min\{a,b\big\}, c^{-}\big\}.$$
\end{definition}
\noindent We end up this section with the following 
\begin{definition}\label{definition-curved-domain}
Let $\Omega \subset {\mathbb{R}}^n$ a bounded domain and $q>n$. We say $\Omega$ is $W^{2,q}-$domain if for each point $x_0 \in \partial \Omega$ there corresponds a coordinate system $(x',x_{n})\in \mathbb{R}^{n-1}\times\mathbb{R}$ together with a $W^{2,q}$ function $h:\mathbb{R}^{n-1}\to\mathbb{R}$ and $r_{0} > 0$ such that
$$\Omega \cap B_{r_{0}}(x_0) = \Big\{x=(x',  x_n): x_n > h(x_1, x_2, \cdots, x_{n - 1}) \Big\}=\Omega\cap B_{r_{0}}(x_{0}).$$
In the case $q=\infty$, i.e,  $h\in C^{1,1}(\mathbb{R}^{n-1}),$ we say that $\Omega$ is a $C^{1,1}-$domain. This is equivalent to say that $\Omega$ satisfies a uniform interior and exterior ball condition $($see Lemma 2.2 in \cite{Nages}$).$

\end{definition}

\section{Main results} 
In this section, we present the main results on the paper. In the sequel, we use the notation
\begin{equation}\label{gamma-r-zero}
 \gamma_{R_{0}} := \max\Big\{\gamma, \gamma\cdot R_{0}\Big\},
 \end{equation}
 for the case where $\gamma$ is a nonnegative real constant.  We observe that in some results below, we allow $\gamma$ to be a function in the Lebesgue space $L^{q}$. Whenever this is the case, this will be indicated in the statements of the corresponding results. Unless explicitly stated otherwise, $\gamma$ is a nonnegative constant. We refer the reader to section 5 to check definitions and structural conditions for the PDEs below.
 We use the notation for $r>0$
 
 $$ \mathcal{A}_{\frac{r}{2}, r}:=\Big\{x\in \mathbb{R}^{n}; \ \frac{r}{2}<|x|<r \Big\} .$$
 \begin{proposition}[{\bf Inhomogeneous Pucci Barriers - I}]\label{Pucci-Barriers}  Let  us consider the constants $0\leq \gamma\leq \gamma_{0}, ~M\geq 0 $ and $0<r\leq R_{0}$. Assume $f\in L^{q}(\mathcal{A}_{r})$ with $q>n$. Then,  there exist a unique $L^{n}-$viscosity solutions in $C^{0}(\overline{\mathcal{A}_{r}})$ to the following Dirichlet problem 
 \begin{equation} \label{DP-positive-pucci-barrier-displaced}
 \left \{
    \begin{array}{rcll}
\mathcal{P}_{\gamma}^{-}[u]&=& f & \textrm{ in } \:\mathcal{A}_{\frac{r}{2}, r} \\\\
     u &=& 0& \textrm{ on } \partial B_{r}\\\\
    u &=& M & \textrm { on } \partial B_{\frac{r}{2}}.
    \end{array}
    \right.
\end{equation}
This solution is also a $L^{n}-$strong solution to (\ref{DP-positive-pucci-barrier-displaced}).\\

\noindent  Furthermore,  denoting $d_{r}(x)=dist(x,\partial B_{r}),$ we have \ $\forall x\in \overline{\mathcal{A}_{r/2,r}}$ that 

\begin{equation}\label{estimate-pucci-geometry-barrier}
\Bigg(A_{1}\frac{M}{r} - A_{2}\cdot r^{1-n/q}||f||_{L^{q}(\mathcal{A}_{r/2,r})}\Bigg)\cdot d_{r}(x) \leq u(x) \leq \Big(A_{3}\frac{M}{r} + A_{4}\cdot r^{1-n/q}||f||_{L^{q}(\mathcal{A}_{r/2,r})}\Big)\cdot d_{r}(x)
\end{equation}
\noindent Here,  $A_{1},A_{3}$ depend on $n,\lambda, \Lambda, \gamma_{R_{0}}$ and $A_{2}, A_{4}$ depend only on $n,q, \lambda, \Lambda, \gamma_{R_{0}}$ are positive universal constants.
 \end{proposition}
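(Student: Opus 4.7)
I would prove Proposition \ref{Pucci-Barriers} by combining general existence theory with an explicit radial Pucci-barrier comparison argument. Since $\mathcal{A}_{\frac{r}{2},r}$ has smooth boundary with the uniform two-sided ball condition, $\mathcal{P}_\gamma^-$ is proper $(\lambda,\Lambda)$-uniformly elliptic with bounded first-order coefficient, and $f\in L^q$ with $q>n$, Perron's method in the Caffarelli-Crandall-Kocan-Swiech $L^n$-viscosity framework, supplemented with classical exterior-ball barriers to realize the Dirichlet data continuously, yields a unique $u\in C^0(\overline{\mathcal{A}_{\frac{r}{2},r}})$; $W^{2,q}_{\mathrm{loc}}$ regularity upgrades $u$ to a strong $L^n$-solution. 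The rescaling $\tilde u(y):=u(ry)/r$ puts the problem on $\mathcal{A}_{\frac{1}{2},1}$ with $\tilde\gamma:=r\gamma\leq\gamma_{R_0}$ and $\|\tilde f\|_{L^q(\mathcal{A}_{\frac{1}{2},1})}=r^{1-n/q}\|f\|_{L^q(\mathcal{A}_{\frac{r}{2},r})}$; it thus suffices to prove (\ref{estimate-pucci-geometry-barrier}) at $r=1$.

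For the upper bound, I would propose the explicit supersolution $\Phi(x):=M\,H(|x|)+V(x)$. Take $H(s):=(s^{-\tau}-1)/(2^\tau-1)$ with $\tau=\tau(n,\lambda,\Lambda,\gamma_{R_0})$ large enough that a direct radial computation delivers $\mathcal{P}^+_{\gamma_{R_0}}[H]\leq 0$ in $\mathcal{A}_{\frac{1}{2},1}$; since $H(1)=0$, $H(1/2)=1$, and $H$ is convex, one reads off $c(\tau)(1-s)\leq H(s)\leq 2(1-s)$ on $[1/2,1]$. Let $V$ be the strong solution of $\mathcal{P}^-_{\gamma_{R_0}}[V]=-|\tilde f|$ in $\mathcal{A}_{\frac{1}{2},1}$ with zero boundary data; then $V\geq 0$ by the minimum principle and $\|V\|_{L^\infty}\leq C\|\tilde f\|_{L^q}$ by ABP. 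Using the valid inequality $\mathcal{P}^-_\gamma[A+B]\leq\mathcal{P}^+_\gamma[A]+\mathcal{P}^-_\gamma[B]$ together with positive homogeneity, $\mathcal{P}^-_{\gamma_{R_0}}[\Phi]\leq -|\tilde f|\leq\tilde f$; since $\Phi=\tilde u$ on $\partial\mathcal{A}_{\frac{1}{2},1}$, the $L^n$-viscosity comparison principle yields $\tilde u\leq\Phi$. The remaining task is to upgrade $\|V\|_{L^\infty}\leq C\|\tilde f\|_{L^q}$ to the pointwise linear bound $V(x)\leq C\|\tilde f\|_{L^q}\,d_1(x)$; I would compare $V$ on a boundary layer $\{1-\delta<|x|<1\}$ with a dilated copy of $H$ whose Dirichlet datum on the inner sphere of the layer equals the $L^\infty$ value of $V$.

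The lower bound is obtained by the mirror construction $\Psi(x):=M\,H'(|x|)-V'(x)$, where $H'(s):=(s^{-\tau'}-1)/(2^{\tau'}-1)$ is chosen with $\tau'$ small enough that $\mathcal{P}^-_{\gamma_{R_0}}[H']\geq 0$, $V'$ is the strong solution of $\mathcal{P}^+_{\gamma_{R_0}}[V']=|\tilde f|$ with zero boundary data, and the dual identity $\mathcal{P}^-_\gamma[A-B]\geq\mathcal{P}^-_\gamma[A]-\mathcal{P}^+_\gamma[B]$ replaces subadditivity. The principal obstacle is precisely the Hopf-type linear boundary bound on $V$ (and symmetrically on $V'$): since IHOL cannot be invoked at this stage---Proposition \ref{Pucci-Barriers} is the very tool from which IHOL will later be derived---this linear upgrade must be extracted from the global ABP bound by a radial-barrier comparison on a boundary layer of each sphere. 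Undoing the scaling then reassembles the constants $A_1,\ldots,A_4$ with the stated dependencies, the factor $r^{1-n/q}$ accompanying $\|f\|_{L^q}$ arising naturally from the scaling of the $L^q$ norm.
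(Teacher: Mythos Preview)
Your barrier $H(s)=(s^{-\tau}-1)/(2^\tau-1)$ with $\tau$ large does \emph{not} satisfy $\mathcal{P}^+_{\gamma_{R_0}}[H]\leq 0$. A direct computation gives, for $r=|x|\in[1/2,1]$,
\[
\mathcal{M}^+_{\lambda,\Lambda}\bigl(D^2H(|x|)\bigr)=\Lambda H''(r)+\lambda(n-1)\frac{H'(r)}{r}
=\frac{\tau\,r^{-\tau-2}}{2^\tau-1}\bigl[\Lambda(\tau+1)-\lambda(n-1)\bigr],
\]
which is \emph{positive} as soon as $\tau>\lambda(n-1)/\Lambda-1$; adding the nonnegative drift term $\gamma|\nabla H|$ only makes things worse. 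Thus for large $\tau$ one gets $\mathcal{P}^+[H]>0$, and your subadditivity step $\mathcal{P}^-_\gamma[MH+V]\leq M\mathcal{P}^+_\gamma[H]+\mathcal{P}^-_\gamma[V]$ no longer yields a supersolution. The same sign analysis shows that the lower-bound claim ``$\tau'$ small enough that $\mathcal{P}^-_{\gamma_{R_0}}[H']\geq 0$'' is also reversed: one needs $\tau'\geq\Lambda(n-1)/\lambda-1$, i.e.\ $\tau'$ large. In short, the power barrier $s^{-\tau}$ is a \emph{subsolution} of $\mathcal{P}^-$ for large $\tau$ (this is the classical Hopf barrier r\^ole), not a supersolution of $\mathcal{P}^+$.

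The paper avoids this altogether by a cleaner decomposition. It takes $v=\Gamma_0$ to be the \emph{exact} classical solution of $\mathcal{P}^-_\gamma[v]=0$ in $\mathcal{A}_{1/2,1}$ with the same boundary data (Lemma~\ref{classical}; an explicit radial formula), for which the two-sided linear bound $\overline{A_1}M\,d_1(x)\leq v\leq \overline{A_3}M\,d_1(x)$ is read off directly. Then $w:=u-v$ lies in $S(\gamma,f)$ with zero boundary data (Lemma~\ref{difference-s-star-class}), and the linear estimate $|w(x)|\leq C\|f\|_{L^q}\,d_1(x)$ is obtained not by a layer-barrier argument but by invoking global $C^{1,\alpha}$ regularity up to the boundary for the Pucci extremal equations with $L^q$ right-hand side, $q>n$ (Lemma~\ref{lemma-control-by-the-distance}, via Winter~\cite{NW}): the comparison functions $\Gamma_a,\Gamma_b$ solving $\mathcal{P}^{\pm}_\gamma=\mp|f|$ with zero data are globally Lipschitz with $\|\nabla\Gamma_{a,b}\|_{L^\infty}\leq C\|f\|_{L^q}$, hence $|\Gamma_{a,b}(x)|\leq C\|f\|_{L^q}\,d_1(x)$ immediately. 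This sidesteps both your barrier sign issue and the delicate ``linear upgrade of $V$'' you flagged as the principal obstacle; no circularity with IHOL arises because the $C^{1,\alpha}$ input is an independent regularity theorem.
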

 \noindent Following exactly the same strategy of the proof of Proposition \ref{Pucci-Barriers}, we can obtain a ``symmetric" result for $\mathcal{P}_{\gamma}^{+}$
  \begin{proposition}[{\bf Inhomogeneous Pucci Barriers - II}]\label{Pucci-Barriers+}  Let  us consider the constants $0\leq \gamma\leq \gamma_{0}, ~M\geq 0 $ and $0<r\leq R_{0}$. Assume $f\in L^{q}(\mathcal{A}_{r})$ with $q>n$. Then,  there exist a unique $L^{n}-$viscosity solutions in $C^{0}(\overline{\mathcal{A}_{r}})$ to the following Dirichlet problem 
 \begin{equation} \label{DP-positive-pucci-barrier-displaced-+}
 \left \{
    \begin{array}{rcll}
\mathcal{P}_{\gamma}^{+}[v]&=& f & \textrm{ in } \:\mathcal{A}_{\frac{r}{2}, r} \\\\
     v &=& M& \textrm{ on } \partial B_{r}\\\\
    v &=& 0 & \textrm { on } \partial B_{\frac{r}{2}}.
    \end{array}
    \right.
\end{equation}
This solution is also a $L^{n}-$strong solution to (\ref{DP-positive-pucci-barrier-displaced}).\\

\noindent  Furthermore, by denoting ${d}_{r}^{*}(x)=dist(x,\partial B_{r/2}),$ we have \ $\forall x\in \overline{\mathcal{A}_{r/2,r}}$

\begin{equation}\label{estimate-pucci-geometry-barrier}
\Bigg(\overline{A}_{1}\frac{M}{r} - \overline{A}_{2}\cdot r^{1-n/q}||f||_{L^{q}(\mathcal{A}_{r/2,r})}\Bigg)\cdot d_{r}^{*}(x) \leq u(x) \leq \Big(\overline{A}_{3}\frac{M}{r} + \overline{A}_{4}\cdot r^{1-n/q}||f||_{L^{q}(\mathcal{A}_{r/2,r})}\Big)\cdot d_{r}^{*}(x)
\end{equation}
\noindent Here,  $\overline{A}_{1},\overline{A}_{3}$ depend on $n,\lambda, \Lambda, \gamma_{R_{0}}$ and $\overline{A}_{2}, \overline{A}_{4}$ depend only on $n,q, \lambda, \Lambda, \gamma_{R_{0}}$ are positive universal constants.
 \end{proposition}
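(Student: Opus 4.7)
The argument mirrors that of Proposition \ref{Pucci-Barriers}. Existence and uniqueness of the $L^{n}$-viscosity solution $v\in C^{0}(\overline{\mathcal{A}_{r/2,r}})$ to (\ref{DP-positive-pucci-barrier-displaced-+}) follow from Perron's method combined with the comparison principle for $L^{n}$-viscosity solutions of uniformly elliptic Pucci-type equations with $L^{q}$-RHS ($q>n$); routine radial barriers on the annulus secure the attainment of the Dirichlet data. The upgrade from $L^{n}$-viscosity to $L^{n}$-strong solution is then a direct application of the Caffarelli-Crandall-Kocan-Swiech $W^{2,p}$ regularity theory, exactly as in Proposition \ref{Pucci-Barriers}.

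For the two-sided estimate (\ref{estimate-pucci-geometry-barrier}), one is tempted to invoke the operator duality: setting $\tilde{u}(x):=M-v(x)$, the identity $\mathcal{M}^{-}(-X)=-\mathcal{M}^{+}(X)$ together with $|\nabla \tilde{u}|=|\nabla v|$ gives
\[
\mathcal{P}^{-}_{\gamma}[\tilde{u}]=-\mathcal{P}^{+}_{\gamma}[v]=-f \quad \text{in } \mathcal{A}_{r/2,r},
\]
with $\tilde{u}=0$ on $\partial B_{r}$ and $\tilde{u}=M$ on $\partial B_{r/2}$, which is precisely the framework of Proposition \ref{Pucci-Barriers}. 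However, that proposition controls $\tilde{u}$ linearly in $d_{r}(x)=\mathrm{dist}(x,\partial B_{r})$, the distance to the boundary where $\tilde{u}$ vanishes, whereas the target estimate asks for a linear control in terms of $d_{r}^{*}(x)=\mathrm{dist}(x,\partial B_{r/2})$, the boundary where $v$ itself vanishes. Consequently, the duality alone is not sufficient, and one must redo the Hopf-type barrier analysis at the inner sphere $\partial B_{r/2}$.

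This is where the main obstacle lies, and it is handled by constructing explicit inhomogeneous Pucci barriers for $\mathcal{P}^{+}_{\gamma}$ in analogy with the ones behind Proposition \ref{Pucci-Barriers}. Since $\mathcal{P}^{+}_{\gamma}$ applied to a radial profile $\psi(|x|)$ reduces to an ODE, one chooses a homogeneous profile of the form $\alpha+\beta|x|^{-\delta}$, with exponent $\delta>0$ determined by $n,\lambda,\Lambda,\gamma_{R_{0}}$, normalized to equal $M$ on $\partial B_{r}$, to vanish on $\partial B_{r/2}$, and to have gradient of order $M/r$ at $\partial B_{r/2}$; this yields the constants $\overline{A}_{1},\overline{A}_{3}$ depending only on $n,\lambda,\Lambda,\gamma_{R_{0}}$. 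The inhomogeneous correction absorbing $f$ is built by solving an auxiliary Dirichlet problem with RHS $\pm|f|$ and zero boundary data and controlled via ABP, which produces the sharp scaling $r^{1-n/q}\|f\|_{L^{q}(\mathcal{A}_{r/2,r})}$ and delivers the constants $\overline{A}_{2},\overline{A}_{4}$ depending additionally on $q$. Comparison of $v$ with the resulting sub- and super-solutions then concludes the proof.
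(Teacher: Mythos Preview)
Your overall strategy is the same as the paper's: decompose $v$ as a homogeneous radial barrier plus an inhomogeneous correction with zero boundary data, and combine via comparison. The paper simply says ``following exactly the same strategy of the proof of Proposition~\ref{Pucci-Barriers}'', so your outline is on target. Two points deserve sharpening.

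First, the inhomogeneous correction is \emph{not} controlled by ABP alone. ABP gives only an $L^{\infty}$ bound of order $r^{2-n/q}\|f\|_{L^{q}}$, which does not vanish linearly at $\partial B_{r/2}$ and hence cannot produce a factor $d_{r}^{*}(x)$. In the paper (Lemma~\ref{lemma-control-by-the-distance}) the correction is obtained by solving the extremal problems $\mathcal{P}_{\gamma}^{\pm}[\Gamma]=\pm|f|$ with zero data on $\partial\mathcal{A}_{r/2,r}$ and then invoking the global $C^{1,\alpha}$ regularity of the Pucci extremal operators (Theorem~4.5 in \cite{NW}, available because the operators are convex/concave and $q>n$). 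This yields a Lipschitz bound $\|\nabla\Gamma\|_{L^{\infty}}\leq C\,r^{1-n/q}\|f\|_{L^{q}}$, from which $|\Gamma(x)|\leq C\,r^{1-n/q}\|f\|_{L^{q}}\cdot d_{r}^{*}(x)$ follows by evaluating $|\Gamma(x)-\Gamma((r/2)x/|x|)|$. Replace ``ABP'' by this $C^{1,\alpha}$ argument and the step goes through.

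Second, the homogeneous profile $\alpha+\beta|x|^{-\delta}$ is not an exact solution of $\mathcal{P}_{\gamma}^{+}[\cdot]=0$ when $\gamma>0$; the drift term forces an additional exponential factor, as in the paper's Lemma~\ref{classical} where the radial ODE yields $\int t^{-\mathcal{E}_{0}}e^{-(\gamma/\lambda)t}\,dt$. For $\mathcal{P}_{\gamma}^{+}$ one solves the corresponding ODE (with the roles of $\lambda,\Lambda$ swapped in the exponent and the sign of the drift adjusted) to obtain a $C^{\infty}$ radial solution vanishing on $\partial B_{r/2}$, equal to $M$ on $\partial B_{r}$, and satisfying the two-sided bound $\overline{A}_{1}(M/r)\,d_{r}^{*}(x)\leq \Gamma_{0}(x)\leq \overline{A}_{3}(M/r)\,d_{r}^{*}(x)$. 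Your heuristic is right in spirit, but the explicit form matters for the dependence of $\overline{A}_{1},\overline{A}_{3}$ on $\gamma_{R_{0}}$.
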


 \begin{theorem}[{{\bf IHOL - Fully Nonlinear Case}}]\label{IHOL} Let $0\leq u\in C^{0}(\overline{B_{r}})$, $f\in L^{q}(B_{r})$ with $q>n$ and $0 < r \leq R_0$. Assume that $u \in {S}^*(\gamma,f)$ in $B_{r}$. Then, there exist positive and universal constants $C_{1}, C_{2}$ such that $ \forall x\in \overline{B_{r}}$
\begin{equation}\label{hopf-mean-value-inequality-FN}
u(x) \geq \Bigg(\frac{C_{1}}{r} u(0) - C_{2} \Big( r^{1 - n/q} ||f||_{L^{q}(B_{r})} \Big) \Bigg)\cdot dist(x,\partial B_{r}).
\end{equation}
Assume moreover the inwards $\frac{\partial u}{\partial\nu} (x_{0})$ unit normal derivative  exists at $x_{0}\in\partial B_{r}$ and $u(x_{0})=0.$ Then, 
\begin{equation}\label{BCN-Hopf} \frac{u(0)}{r} \leq C_3 \cdot \Bigg(  \frac{\partial u}{\partial \nu}(x_{0}) +  r^{1 - n/q}\cdot ||f||_{L^{q}(B_{r})}\Bigg).
\end{equation}
Here, $C_{1}, C_{2}$ and $C_{3}$ are positive universal constants depending only on $n, q, \lambda, \Lambda, \gamma_{R_{0}}.$
\end{theorem}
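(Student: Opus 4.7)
The plan is to follow the classical Hopf--Ole\u{\i}nik scheme adapted to $L^n$-viscosity solutions with unbounded RHS. The strategy combines the interior Harnack inequality for the class $S^*(\gamma, f)$ with $f\in L^{q}$, $q>n$, together with the inhomogeneous Pucci barrier from Proposition \ref{Pucci-Barriers} on the spherical annulus $\mathcal{A}_{r/2, r}$. The overall goal is to transfer the pointwise lower bound $u(0)>0$ into a linear-in-distance lower bound close to $\partial B_r$, paying only the expected $r^{1-n/q}\|f\|_{L^{q}}$ error.

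First I would apply the interior Harnack inequality (Caffarelli--Crandall--Kocan--Swiech) to the nonnegative $u\in S^{*}(\gamma,f)$ on $B_r$, iterated via a Harnack chain on overlapping balls covering a neighborhood of $\overline{B}_{r/2}$, yielding universal constants $c_H, C_H>0$ depending only on $n, q, \lambda, \Lambda, \gamma_{R_0}$ such that
\begin{equation*}
\inf_{\overline{B}_{r/2}} u \;\geq\; c_H\, u(0) - C_H\, r^{2-n/q}\, \|f\|_{L^{q}(B_{r})} \;=:\; M.
\end{equation*}
If $M\leq 0$, the estimate (\ref{hopf-mean-value-inequality-FN}) is immediate since $u\geq 0$ and the right-hand side can be made nonpositive by choosing $C_2$ large (absorbing $r^{2-n/q}\|f\|$ into $r^{1-n/q}\|f\|$ via $r\leq R_{0}$). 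Otherwise, I would invoke Proposition \ref{Pucci-Barriers} on $\mathcal{A}_{r/2, r}$ with boundary values $M$ on $\partial B_{r/2}$ and $0$ on $\partial B_{r}$ and an RHS chosen compatibly with $u$'s supersolution property, producing a barrier $w$ satisfying $u\geq w$ on $\partial\mathcal{A}_{r/2, r}$; the $L^{n}$-viscosity comparison principle in the $S^{*}$ class (ABP with drift and $L^{q}$-RHS) then extends this to $u\geq w$ throughout $\overline{\mathcal{A}}_{r/2, r}$.

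Next I would read off the linear lower bound from Proposition \ref{Pucci-Barriers}, namely
\begin{equation*}
u(x) \;\geq\; w(x) \;\geq\; \Big(A_{1}\frac{M}{r} - A_{2}\, r^{1-n/q}\, \|f\|_{L^{q}(B_{r})}\Big)\, d_{r}(x), \qquad x\in\overline{\mathcal{A}}_{r/2, r},
\end{equation*}
substitute the value of $M$, and use $r^{2-n/q}\leq R_{0}\cdot r^{1-n/q}$ to absorb the Harnack error into the barrier error. This yields constants $C_1, C_2$ (with the stated universal dependence) so that (\ref{hopf-mean-value-inequality-FN}) holds on $\overline{\mathcal{A}}_{r/2, r}$. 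On $B_{r/2}$, where $d_{r}(x)\in[r/2,r]$ and $u\geq M$ from the Harnack step, the same bound follows after a mild downward readjustment of $C_1$ if necessary. For (\ref{BCN-Hopf}) I would then specialize (\ref{hopf-mean-value-inequality-FN}) to $x=x_{0}+t\nu$ for small $t>0$, use $d_{r}(x_{0}+t\nu)=t$ and $u(x_{0})=0$, divide by $t$, and let $t\to 0^{+}$; the resulting inequality $\frac{\partial u}{\partial\nu}(x_{0})\geq \frac{C_{1}}{r}u(0) - C_{2}\, r^{1-n/q}\|f\|_{L^{q}(B_{r})}$ rearranges directly to (\ref{BCN-Hopf}) with $C_{3}=1/C_{1}$.

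The main technical hurdle will be matching the sign/structure of the Pucci barrier's equation with $u$'s supersolution property in $S^{*}(\gamma,f)$ so that the $L^{n}$-viscosity comparison principle applies cleanly on the annulus; this reduces to ABP-type estimates for Pucci extremal operators with drift $\gamma\in L^{q}$ and $L^{q}$-RHS, standard in the Caffarelli--Crandall--Kocan--Swiech framework but requiring care to state with the scale-invariant dependence on $\gamma_{R_{0}}$. A secondary, essentially bookkeeping, point is the absorption of the $r^{2-n/q}$ Harnack error into the $r^{1-n/q}$ barrier error uniformly for $r\leq R_{0}$, which is precisely where the normalized quantity $\gamma_{R_{0}}=\max\{\gamma,\gamma R_{0}\}$ enters the final constants.
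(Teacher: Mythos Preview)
Your proposal is correct and follows essentially the same approach as the paper: interior Harnack inequality to push $u(0)$ down to a lower bound on $\partial B_{r/2}$, the inhomogeneous Pucci barrier of Proposition \ref{Pucci-Barriers} on the annulus $\mathcal{A}_{r/2,r}$, and the $L^n$-viscosity comparison principle against the strong solution barrier. The only organizational difference is that the paper first reduces (by a smallness dichotomy) to the case $\|f\|_{L^{q}}\leq H_{2}\,u(0)$ and then runs Harnack + barrier, whereas you run Harnack first, set $M=c_{H}u(0)-C_{H}r^{2-n/q}\|f\|_{L^{q}}$, and split on the sign of $M$; both routes are equivalent, and in fact your case $M\leq 0$ needs no $R_{0}$-absorption since $M\leq 0$ already gives $\tfrac{C_{1}}{r}u(0)\leq \tfrac{C_{1}C_{H}}{c_{H}}r^{1-n/q}\|f\|_{L^{q}}$ directly.
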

\begin{remark}\label{IHOL-supersolutions} In the Theorem \ref{IHOL} above, if condition $u\in {S}^{*}(\gamma;f)$ is replaced by $u\in \overline{S}(\gamma;f)$ in $B_{r}$ instead, then $u(0)$ can be replaced by the average value $|B_{r}|^{-n/\varepsilon}||\frac{u}{dist(x, \partial B_{r})}||_{L^{\varepsilon}(B_{r/2})}$ in both estimates (\ref{hopf-mean-value-inequality-FN}) and (\ref{BCN-Hopf}). This follows from Theorems 2 and 11 in \cite{Boyan-2}. As a consequence of that,  it follows that one can also replace $u(0)$ by $|B_{r}|^{-n/\varepsilon}||{u}||_{L^{\varepsilon}(B_{r/2})}$ since this is a smaller quantity when compared to $|B_{r}|^{-n/\varepsilon}||\frac{u}{dist(x, \partial B_{r})}||_{L^{\varepsilon}(B_{r/2})}$.

 This latter fact however has an much simpler proof. One can just follow exactly the proof of Theorem \ref{IHOL} as presented here using weak Harnack inequality in (\ref{harnack-in-proof-Hopf}) instead of Harnack inequality just replacing $u(0)$ by $||u||_{L^{\varepsilon}(B_{1/2})}$ in the proof (see Theorem  2.1 in \cite{BM-IHOL-PartI}). The proof is essentially a consequence of the geometry of the Pucci barriers.
\end{remark}

\begin{proposition} [{{\bf Boundary Ladyzhenskaya-Uraltseva Lipschitz type estimate}}] \label{bdry-lip-type-estimate-scaled-version}
Let $u \in C^{0}(\overline{B^{+}_{r}})$ and $f\in L^{q}(B^{+}_{r})$ with $q>n$ and $0 < r \leq R_0$. Assume that $u \in S^{*}(\gamma,f)$ in $B^{+}_{r}$. Then, there exists a positive universal constant $D_{1}>0$ such that
\begin{equation} \label{Lip estimate fully - scaled}
\vert u(x) \vert \leq D_{1} \Bigg( \frac{{\Vert u \Vert}_{L^{\infty}(B_r^+)}}{r} + r^{1 - n/q} ||f||_{L^{q}(B_{r}^{+})} \Bigg) \cdot x_n + \sup_{B'_r} \vert u \vert, \ \ \ \ \forall \ x \in \overline{B_{\frac{r}{2}}^{+}},
\end{equation}
 Moreover, this estimate is sharp and does not hold for $q=n.$ Here, $D_{1} = D_{1}(n, q, \lambda, \Lambda, \gamma_{_{R_{0}}}).$
\end{proposition}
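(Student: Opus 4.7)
The plan is a tangent-ball comparison argument based on the Pucci barriers constructed in Propositions \ref{Pucci-Barriers} and \ref{Pucci-Barriers+}.

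First, I would reduce to a one-sided estimate under zero flat boundary data. Since the class $S^{*}(\gamma, f)$ is invariant under addition of constants and under $u \mapsto -u$, subtracting $\sup_{B_{r}'} u^{+}$ from $u$ (and running the same argument for $-u$) reduces the problem to proving
\[
u(x) \leq D_{1} \bigg( \frac{\|u\|_{L^{\infty}(B_{r}^{+})}}{r} + r^{1 - n/q} \|f\|_{L^{q}(B_{r}^{+})} \bigg) x_{n}, \qquad x \in \overline{B_{r/2}^{+}},
\]
under the additional hypothesis $u \leq 0$ on $B_{r}'$.

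Next, for each fixed $x_{0} = (x_{0}', x_{0,n}) \in \overline{B_{r/2}^{+}}$, I would set $\rho = r/4$ and $y_{0} = (x_{0}', \rho)$. A direct geometric computation verifies that $B_{\rho}(y_{0})$ lies inside $\overline{B_{r}^{+}}$ with its boundary tangent to $\{x_{n} = 0\}$ at $(x_{0}', 0)$, and that $d_{\rho}(x_{0}) := \mathrm{dist}(x_{0}, \partial B_{\rho}(y_{0})) = x_{0,n}$ whenever $x_{0,n} \leq \rho$. On the annulus $\mathcal{A} = \mathcal{A}_{\rho/2,\rho}(y_{0})$, apply Proposition \ref{Pucci-Barriers} with $M = N := \|u\|_{L^{\infty}(B_{r}^{+})}$ and right-hand side $-|f|$ to produce the Pucci barrier $\Phi$ with $\mathcal{P}_{\gamma}^{-}(\Phi) = -|f|$ in $\mathcal{A}$, $\Phi = 0$ on $\partial B_{\rho}(y_{0})$, $\Phi = N$ on $\partial B_{\rho/2}(y_{0})$, satisfying by (3.4)
\[
\Phi(x) \leq A_{3} \bigl( N/\rho + \rho^{1 - n/q}\|f\|_{L^{q}(\mathcal{A})} \bigr) d_{\rho}(x).
\]
I then extend $\Phi \equiv N$ on $\overline{B_{\rho/2}(y_{0})}$ and compare $u$ with $\Phi$ on $B_{\rho}(y_{0}) \cap B_{r}^{+}$. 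On the inner sphere $u \leq N = \Phi$, and at the tangent point $u \leq 0 = \Phi$. The technical step is to transfer comparison to the non-tangent portion of the outer sphere $\partial B_{\rho}(y_{0})$, where $u$ may be positive while $\Phi = 0$. This is the central obstacle, and it is overcome either by adding an auxiliary barrier from Proposition \ref{Pucci-Barriers+} on a slightly larger concentric annulus (dominating $u$ on the remaining part of $\partial B_{\rho}(y_{0})$ while contributing only a term of order $(N/r)\, x_{0,n}$ at $x_{0}$), or equivalently by an $L^{n}$-viscosity ABP-style comparison absorbing the outer-sphere defect into a Morrey-scale error of order $r^{1-n/q}\|f\|_{L^{q}}$. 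Evaluating at $x_{0}$ and using $\rho \sim r$ yields the announced inequality; points with $x_{0,n} \geq \rho$ are handled trivially via $|u| \leq N \leq (CN/r)\, x_{0,n}$.

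For the sharpness claim at $q = n$, I would refer to the explicit logarithmic-type counterexample in Section 9, where a function of the form $u(x) = x_{n}\, \psi(x)$ with a borderline $L^{n}$ right-hand side exhibits boundary behavior strictly worse than linear, showing the Morrey exponent $1 - n/q$ cannot be improved. The main difficulty of the proof is precisely the outer-sphere defect in the comparison step: the barrier $\Phi$ vanishes on the entirety of $\partial B_{\rho}(y_{0})$ while $u$ is only controlled at the tangent point, and it is this interplay between Propositions \ref{Pucci-Barriers} and \ref{Pucci-Barriers+} (or, equivalently, the inhomogeneous ABP with $L^{q}$ right-hand side) that encodes both the correct constant $D_{1}$ and the sharpness of the exponent $1 - n/q$.
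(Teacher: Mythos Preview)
Your tangent-ball strategy has the right spirit, but there are two genuine gaps. First, the barrier from Proposition~\ref{Pucci-Barriers} solves $\mathcal{P}_\gamma^-[\Phi]=-|f|$, and this is the wrong extremal operator for an \emph{upper} bound on $u$. Since $u\in\underline{S}(\gamma,-|f|)$ means $\mathcal{P}_\gamma^+[u]\ge -|f|$, the comparison principle (Theorem~2.10 in \cite{CCKS}) requires your barrier to be a supersolution of $\mathcal{P}_\gamma^+[\cdot]=-|f|$, i.e.\ $\mathcal{P}_\gamma^+[\Phi]\le -|f|$. But $\mathcal{P}_\gamma^+[\Phi]\ge\mathcal{P}_\gamma^-[\Phi]=-|f|$, which goes the wrong way; you need the $\mathcal{P}_\gamma^+$ barrier of Proposition~\ref{Pucci-Barriers+}. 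Second, the ``outer-sphere defect'' you flag is real and your proposed fixes do not close it: an auxiliary barrier on a larger concentric annulus is not even defined at $x_0$ (which lies inside $B_\rho(y_0)$), and an ABP estimate on $u-\Phi$ only produces $\sup(u-\Phi)\le\sup_{\partial}(u-\Phi)^+ +C\|f\|$, where the boundary term is of order $N$ on most of $\partial B_\rho(y_0)$, not $O((N/r)\,x_{0,n})$.

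The paper's proof dissolves both issues with a single geometric change: it centers the comparison ball \emph{below} the flat boundary, at $y_0=(x_0',-1/16)$ with radius $1/8$, and uses the Proposition~\ref{Pucci-Barriers+} barrier $\Gamma^+$ (so $\Gamma^+=M:=\|u\|_{L^\infty}$ on the outer sphere $\partial B_{1/8}(y_0)$ and $\Gamma^+=0$ on the inner sphere $\partial B_{1/16}(y_0)$; the RHS $f$ is extended by even reflection across $\{x_n=0\}$). The closed inner ball $\overline{B}_{1/16}(y_0)$ lies in $\{x_n\le 0\}$, so the comparison domain $V:=B_{1/8}(y_0)\cap\{x_n>0\}$ sits entirely in the annulus and $\partial V$ consists only of a portion of the outer sphere (where $u\le M=\Gamma^+$) and a portion of $\{x_n=0\}$ (where $u\le\sup_{B'_1}|u|$ and $\Gamma^+\ge 0$). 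Hence $u\le\Gamma^++\sup_{B'_1}|u|$ on all of $\partial V$, comparison for $\mathcal{P}_\gamma^+$ applies with no defect, and at $x_0$ one has $\mathrm{dist}(x_0,\partial B_{1/16}(y_0))=|x_0-y_0|-1/16=(x_0)_n$ exactly.
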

\begin{remark}  We observe also that Proposition \ref{bdry-lip-type-estimate-scaled-version} extends some of the results of the classical paper of H. Berestycki, L. Caffarelli and L. Nirenberg \cite{BCN} (see Lemma 2.1 in \cite{BCN} for instance) to the case of unbounded RHS and more general (nonlinear) operators. 
\end{remark}
\begin{remark}\label{lips-sub-super} It follows from the proof of Proposition \ref{bdry-lip-type-estimate-scaled-version} the following (see section 8). Let $f\in L^{q}(B_{r}^{+})$ with $q>n$ and $u\in C^{0}(\overline{B_{r}^{+}}).$ Thus, for $u \in \underline{S}(\gamma, -|f|) $  in $ B_{r}^{+}$ we have 
$$ u(x) \leq D_{1}\Bigg( \frac{{\Vert u \Vert}_{L^{\infty}(B_r^+)}}{r} + r^{1 - n/q} ||f||_{L^{q}(B_{r}^{+})} \Bigg) \cdot x_n + \sup_{B'_r} u , \ \ \ \ \forall \ x \in \overline{B_{\frac{r}{2}}^{+}}.$$
\noindent  Analogously, for  $u \in \overline{S}(\gamma, |f|) $  in $ B_{r}^{+}$ we have
$$u(x) \geq - D_{1}\Bigg( \frac{{\Vert u \Vert}_{L^{\infty}(B_r^+)}}{r} + r^{1 - n/q} ||f||_{L^{q}(B_{r}^{+})} \Bigg) \cdot x_n - \inf_{B'_r} u , \quad \forall \ x \in \overline{B_{\frac{r}{2}}^{+}}.$$
\end{remark}
\begin{remark}[{\bf Carleson's estimate for nonnegative solutions}]\label{carleson-estimates-upgrade} In the Lipschitz type estimate provided by Proposition \ref{bdry-lip-type-estimate-scaled-version} if we additionally assume that $u$ is nonnegative and vanishes on the flat boundary $u=0$ in $B'_{r}$, the estimate (\ref{Lip estimate fully - scaled}) takes a sharper form and becomes
\begin{equation} \label{Lip estimate fully-scaled-Carleson}
 u(x) \leq {D_{1}} \Bigg( \frac{{u(\frac{r}{2}e_{n})}}{r} + r^{1 - n/q} ||f||_{L^{q}(B_{r}^{+})} \Bigg) \cdot x_n \ \ \ \ \forall \ x \in \overline{B_{\frac{r}{2}}^{+}}.
\end{equation}
This is consequence of the inhomogeneous version of the Carleson's estimate for nonnegative functions in $S^{*}(\gamma, f)$ with $f\in L^{n-\varepsilon_{0}}$ that is proven in a forthcoming work \cite{Braga-Moreira-Carleson}. 
Here, $\varepsilon_{0}>0$  ~is a Escuriaza type exponent. Again, Lipschitz type estimate (\ref{Lip estimate fully-scaled-Carleson}) is sharp in the sense it does not hold for $q=n.$ 
\end{remark}
\begin{remark}\label{Lipschitz-C^{1,1}} It follows from the proof of Proposition \ref{bdry-lip-type-estimate-scaled-version} that this result can be immediately extended to $C^{1,1}$ domains. So, let $\Omega\subset\mathbb{R}^{n}$ be a bounded $C^{1,1}$domain and $u\in C^{0}(\overline{\Omega})\cap S^{*}(\gamma;f)$ in $\Omega.$ Suppose $f\in L^{q}(\Omega)$ with $q>n$. Then, we have the following estimate with $\overline{D}_{1}=\overline{D}_{1}(n,q,\lambda,\Lambda,\gamma,\partial\Omega)$
$$\vert u(x) \vert \leq \overline{D}_{1} \Bigg( {{\Vert u \Vert}_{L^{\infty}(\Omega)}}+ ||f||_{L^{q}(\Omega)} \Bigg) \cdot dist(x,\partial \Omega) + \sup_{\partial\Omega} \vert u \vert, \ \ \ \ \forall \ x \in \overline{\Omega}.$$ 
Likewise, similar observations apply to the context of Remarks \ref{lips-sub-super} and \ref{carleson-estimates-upgrade}.
\end{remark}
\begin{remark} It follows from Theorem \ref{general-pointwise-boundary-krylov-general-boundary-data} (or even Corollary \ref{scaled-general-pointwise-krylov}) and Theorem \ref{global-krylov-w2q-domains} that in the case the function $u$ in Remark \ref{Lipschitz-C^{1,1}} and Proposition \ref{bdry-lip-type-estimate-scaled-version} vanishes on the boundary or flat boundary respectively, we can allow the drift term to be in $\gamma\in L^{q}$ with $q>n$. Moreover, we can allow $W^{2,q}$ domains instead of $C^{1,1}$. In this case, $D_{1}$ depends on $||\gamma||_{L^{q}(\Omega)}$ and $R_{0}^{1-n/q}||\gamma||_{L^{q}(B_{R_{0}}^{+})}$ respectively.

\end{remark}
\begin{proposition}[{{\bf Inhomogeneous Boundary Hopf-Ole\u{\i}nik principle on flat boundaries}}] \label{Boundary Behaviour ffb} Consider $0\leq u\in C^{0}(\overline{B}^{+}_{r})$ such that $u = 0$ in $B'_r$, $f\in L^{q}(B_{r})$ with $q>n$ and $0 < r \leq R_0$. Assume that $u \in S^{*}(\gamma,f)$ in $B^{+}_{r}$. Then,
\begin{equation} \label{hopf-principle-inequality-fully-scaled}
u(x) \geq \Bigg( D_{2} \frac{u \left( \frac{r}{2} e_n \right)}{r} - D_{3} \cdot r^{1 - n/q} ||f||_{L^{q}(B_{r}^{+})} \Bigg) \cdot x_n, \quad \forall x \in \overline{B}_{r/2}^{+}.
\end{equation}
\noindent Here,  $D_{2} = D_{2}(n, q, \lambda, \Lambda, \gamma_{_{R_{0}}}) \in (0, 1)$ and $D_{3} = D_{3}(n,q, \lambda, \Lambda, \gamma_{_{R_{0}}}) > 0$.
\end{proposition}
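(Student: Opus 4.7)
The plan is to apply Theorem \ref{IHOL} in two stages: once on a ball tangent to the flat boundary $B_r'$ at the projection $(x',0)$ of the given point $x=(x',x_n)$, and once (in a short chain of two balls) in the interior of $B_r^+$ to transfer the value of $u$ at $y_0=(r/2)e_n$ to the center of that tangent ball. The principal case is $x_n \leq r/4$; the case $x_n > r/4$ will be handled at the end by absorbing into the target form.

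First I would fix $x=(x',x_n)\in\overline{B^+_{r/2}}$ with $x_n\leq r/4$ and set $P:=(x',r/4)$. The ball $B_{r/4}(P)$ is tangent to $B_r'$ at $(x',0)$, is strictly contained in $B_r^+$ (since $|P|+r/4 \leq \sqrt{|x'|^2+(r/4)^2}+r/4 < r$ when $|x'|\leq r/2$), and on it $\operatorname{dist}(x,\partial B_{r/4}(P))=x_n$. Applying Theorem \ref{IHOL} on $B_{r/4}(P)$ yields
\[
u(x) \geq \left(\frac{4 C_1}{r}\, u(P) \,-\, C_2\,(r/4)^{1-n/q}\,\|f\|_{L^q(B_r^+)}\right) x_n.
\]
To bound $u(P)$ from below in terms of $u(y_0)$ I would insert the intermediate point $z_1:=(x'/2,3r/8)$ and apply Theorem \ref{IHOL} first on $B_{r/2}(y_0)\subset B_r^+$ (which contains $z_1$ at distance of order $r$ from its boundary), and then on a ball $B_{r/3}(z_1)\subset B_r^+$ (which contains $P$ at distance of order $r$ from its boundary). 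Composing the two applications gives
\[
u(P) \geq c_0\, u(y_0) \,-\, C_0\, r^{2-n/q}\,\|f\|_{L^q(B_r^+)}
\]
for universal $c_0,C_0>0$. Substituting this into the previous display and using $(1/r)\cdot r^{2-n/q}=r^{1-n/q}$ produces the claimed estimate with $D_2=4c_0 C_1$ and $D_3$ a universal combination of $C_0,C_1,C_2$. For $x_n\in (r/4,r/2]$ the same two-ball chain is instead applied directly at $x$, giving $u(x)\geq c_0 u(y_0)-C_0 r^{2-n/q}\|f\|$; this fits the target form once one uses $c_0 u(y_0)\geq (c_0/r)u(y_0)\,x_n$ (since $x_n\leq r$) and $C_0 r^{2-n/q}\leq 4 C_0\, r^{1-n/q}\,x_n$ (since $x_n\geq r/4$), possibly enlarging $D_2,D_3$.

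The main technical point is to preserve the error order $r^{2-n/q}\|f\|$ throughout the interior transfer from $u(y_0)$ to $u(P)$. Using the usual Harnack inequality here would introduce an error of order $r^{1-n/q}\|f\|$, which after the subsequent multiplication by $1/r$ in the tangent-ball step would become $r^{-n/q}\|f\|$ and fail to yield a universal constant $D_3$. The crucial feature of Theorem \ref{IHOL} is that its error term already carries the factor $\operatorname{dist}(\cdot,\partial B)$; evaluating at interior points at distance of order $r$ from the sphere therefore automatically produces the sharper error order $r^{2-n/q}\|f\|$, which is exactly what is needed to absorb the $1/r$ arising in the tangent-ball estimate.
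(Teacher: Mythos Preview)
Your proof is correct, but the last paragraph is mistaken, and it is worth pointing this out because it is exactly where your argument diverges from the paper's.

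The paper scales to $r=1$ and then, for the interior transfer from $u(\tfrac12 e_n)$ to the center of the tangent ball, uses the ordinary inhomogeneous Harnack inequality on the compact region $\overline{B}_{3/4}^+\cap\{x_n\ge 1/16\}$, followed by one application of Theorem~\ref{IHOL} on the tangent ball $B_{1/16}(y_0)$. So the paper's structure is Harnack (interior) $+$ IHOL (tangent ball), whereas yours is IHOL $+$ IHOL $+$ IHOL.

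Your claim that Harnack would fail rests on a scaling slip. On a ball of radius $\sim r$, the inhomogeneous Harnack inequality for $S^*(\gamma,f)$ reads
\[
\inf u \;\ge\; c_0\,\sup u \;-\; C\,r^{2-n/q}\,\|f\|_{L^q},
\]
not with error $r^{1-n/q}\|f\|_{L^q}$ as you assert; this follows immediately by applying the unit-scale Harnack to $v(x)=u(rx)$, which lies in $S^*(r\gamma,\,r^2 f(r\cdot))$ with $\|r^2 f(r\cdot)\|_{L^q(B_1)}=r^{2-n/q}\|f\|_{L^q(B_r)}$. With the correct exponent, the $1/r$ from the tangent-ball step turns this into $r^{1-n/q}\|f\|_{L^q}$, which is exactly the target. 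In short: your chain of IHOL applications is a valid substitute, but the Harnack route the paper takes is shorter and does not break down.
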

\begin{remark}\label{hopf-C1,1} As before, it follows from the proof of Proposition \ref{Boundary Behaviour ffb} that this result can extended to $C^{1,1}$ domains in the following way: Let $\Omega\subset\mathbb{R}^{n}$ be a bounded $C^{1,1}-$domain and $0\leq u\in C^{0}(\overline{\Omega})\cap S^{*}(\gamma;f)$ in $\Omega$ where $f\in L^{q}(\Omega)$ with $q>n.$  Suppose $u\equiv 0$ along $\partial\Omega.$ Then, for some $x_{0}\in\Omega$
$$u(x) \geq \Bigg(\overline{D}_{2} \cdot u(x_{0}) - \overline{D}_{3} \cdot ||f||_{L^{q}(\Omega)} \Bigg) \cdot dist(x,\partial\Omega), \quad \forall x \in \Omega.$$
Here,  $\overline{D}_{2} = \overline{D}_{2}(n, q, \lambda, \Lambda, \gamma,\partial\Omega) \in (0, 1)$ and $\overline{D}_{3} = \overline{D}_{3}(n,q, \lambda, \Lambda, \gamma, \partial\Omega) > 0$.
\end{remark}
\begin{theorem}[{{\bf Boundary $C^{1,\alpha_{0}}-$Krylov-Ladyzhenskaya-Uraltseva type estimate}}] \label{boundary krylov thm Lq version}
Let $u \in C^{0}({\overline{B}}_{r}^{+}) \cap S^*(\gamma, f)$ in $B_{r}^{+}$ where $f\in L^q(B_{r}^+)$ with $q > n$ and $r\leq R_{0}.$ Assume that $u$ vanishes on $B'_{r}$. Then, there exists a unique H\"older continuous function $A$ on $B'_{r/2}$ and $\alpha_0 \in (0, 1)$ such that for all $x_0 \in B'_{r/2}$ and $x \in B_{3r/4}^{+}$ we have
\begin{equation} \label{taylor-general-krylov}
\left\vert u(x) - A(x_0) \cdot x_n \right\vert \leq r^{- \alpha_0} E_{1} \Bigg(\frac{||u||_{L^{\infty}(B_{r}^{+})}}{r}   + r^{1-n/q}\cdot ||f||_{L^{q}(B_{r}^{+})}\Bigg) \vert x-x_{0} \vert^{\alpha_0}x_{n},
\end{equation}
and 
\begin{equation} \label{holder-estimate-krylov}
 {\Vert A\Vert}_{C^{0, \alpha_0}(B_{r/2}^{'})}^{*} \leq E_{1}  \Bigg(\frac{||u||_{L^{\infty}(B_{r}^{+})}}{r}   + r^{1-n/q}\cdot ||f||_{L^{q}(B_{r}^{+})}\Bigg).
 \end{equation}
 Furthermore, 
 \begin{equation}\label{krylov-uraltseva-oscillation-estimate}
\Big\| \frac{u}{x_{n}}\Big\|_{C^{0,\alpha_{0}}(B_{r/2}^{+})}^{*}\leq E_{1} \Bigg(\frac{||u||_{L^{\infty}(B_{r}^{+})}}{r}   + r^{1-n/q}\cdot ||f||_{L^{q}(B_{r}^{+})}\Bigg).
 \end{equation}
Precisely, $\alpha_0 = \alpha_0(n, \lambda, \Lambda, q)$ and $E_{1} = E_{1}(n, q, \lambda, \Lambda, \gamma_{R_{0}})>0$.
\end{theorem}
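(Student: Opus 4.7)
The plan is to implement L. Caffarelli's ``Lipschitz implies $C^{1,\alpha}$'' iteration at every dyadic scale, playing the Lipschitz boundary estimate (Proposition \ref{bdry-lip-type-estimate-scaled-version}) against IHOL (Theorem \ref{IHOL}). After translating a fixed base point $x_0 \in B'_{r/2}$ to the origin, spatially scaling by $r$, and dividing by the bracketed quantity
$$K := \frac{\|u\|_{L^\infty(B_r^+)}}{r} + r^{1-n/q}\,\|f\|_{L^q(B_r^+)},$$
one reduces to the normalized setup $u \in S^*(\gamma, f)$ on $B_1^+$, $u \equiv 0$ on $B'_1$, $\|u\|_{L^\infty}\le 1$, $\|f\|_{L^q(B_1^+)}\le 1$, and it suffices to produce $A \in \mathbb{R}$ with $|A|\le E_1$ and $|u(x) - A\, x_n| \le E_1 |x|^{\alpha_0} x_n$ on $\overline{B^+_{3/4}}$.

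The heart of the proof is a dyadic iteration: construct universal $\mu \in (0,\tfrac14)$, $\alpha_0 \in (0, 1 - n/q)$, and scalars $A_0 = 0, A_1, A_2, \dots$ satisfying
\begin{equation}\label{iter-proposal}
\sup_{B^+_{\mu^k}} \bigl| u(x) - A_k\, x_n \bigr| \le \mu^{k(1+\alpha_0)}, \qquad |A_{k+1} - A_k| \le C_0\, \mu^{k\alpha_0}.
\end{equation}
Summability of the increments yields $A_k \to A$ with $|A| \le C_0/(1-\mu^{\alpha_0})$, and telescoping (\ref{iter-proposal}) on the annulus $\mu^{k+1}\le |x| < \mu^k$ gives the pointwise expansion (\ref{taylor-general-krylov}). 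The inductive passage from step $k$ to step $k+1$ is performed by rescaling: letting $v(y) := \mu^{-k(1+\alpha_0)}\bigl(u(\mu^k y) - A_k\,\mu^k y_n\bigr)$, a direct computation shows $v \in S^*(\tilde\gamma_k, \tilde f_k)$ with $\tilde\gamma_k \le \mu^k \gamma$ and
$$\|\tilde f_k\|_{L^q(B_1^+)} \le \mu^{k(1-\alpha_0-n/q)}\bigl(\|f\|_{L^q(B_{\mu^k}^+)} + \gamma_{R_0}|A_k|\bigr) \le C',$$
bounded precisely because $\alpha_0 \le 1-n/q$ and $\{A_k\}$ remains bounded; moreover $v=0$ on $B'_1$ and $\|v\|_{\infty}\le 1$. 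Hence everything reduces to a single one-step flatness lemma.

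\emph{Flatness improvement:} there exist universal $\mu, \alpha_0, \delta_0, C_0$ such that whenever $v \in C^0(\overline{B_1^+})\cap S^*(\gamma,f)$ with $v=0$ on $B'_1$, $\|v\|_\infty\le 1$ and $\|f\|_{L^q(B_1^+)}\le\delta_0$, there is $a\in\mathbb{R}$ with $|a|\le C_0$ and $\sup_{B^+_\mu}|v - a y_n| \le \mu^{1+\alpha_0}$. To prove it, I first invoke Proposition \ref{bdry-lip-type-estimate-scaled-version} (sharpened via Remark \ref{carleson-estimates-upgrade} to kill the boundary-supremum term) to get $|v(y)|\le D\,y_n$ on $\overline{B^+_{1/2}}$, so that $W:= v/y_n$ is a bounded function there. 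Setting $m := \tfrac12\bigl(\sup_{B^+_{1/4}} W + \inf_{B^+_{1/4}} W\bigr)$, the two functions $w_\pm := \pm(v - m\,y_n) + \tfrac{D}{2}\,y_n$ are nonnegative on $B^+_{1/4}$, vanish on $B'_{1/4}$, and remain in a class $S^*(\gamma,\tilde f)$ (since $y_n$ is a classical solution of the linear equation whose only contribution is absorbed into the drift). Applying the mean-value form of IHOL, estimate (\ref{hopf-mean-value-inequality-FN}), to whichever of $w_\pm$ has value at $\tfrac18 e_n$ at least half the spread $D/2$, the Hopf gain translates into an oscillation decay
$$\underset{B^+_\mu}{\mathrm{osc}}\,W \le (1-\theta)\,\underset{B^+_{1/4}}{\mathrm{osc}}\,W + C\,\delta_0,$$
with $\theta\in(0,1)$ universal. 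Choosing $\mu$ so that $(1-\theta)\le \mu^{\alpha_0}$ and $\delta_0$ small yields the improvement, with $a$ being the updated midpoint.

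The main obstacle I expect is this flatness lemma. Unlike the bounded-RHS Caffarelli--Krylov setting in \cite{GT,Ka}, the inhomogeneous term does not scale away: keeping $\|\tilde f_k\|_{L^q}$ bounded across all scales is exactly what forces the restriction $\alpha_0\le 1 - n/q$ and the smallness threshold $\delta_0$, while the drift/zero-order data rescale favorably through $\gamma_{R_0}$. Once (\ref{taylor-general-krylov}) is proved with constants uniform in $x_0\in B'_{r/2}$, the H\"older bound (\ref{holder-estimate-krylov}) on $A$ follows by applying the pointwise expansion at two distinct boundary points $x_0, x_1\in B'_{r/2}$ to the common test point $\bar x = \tfrac{x_0+x_1}{2} + |x_0-x_1|\,e_n$ and subtracting; and (\ref{krylov-uraltseva-oscillation-estimate}) follows by dividing (\ref{taylor-general-krylov}) by $x_n$ and combining with the H\"older continuity of $A$ just obtained.
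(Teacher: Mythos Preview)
Your overall strategy matches the paper's: reduce by scaling, use the Lipschitz boundary estimate (Proposition \ref{bdry-lip-type-estimate-scaled-version}) to trap $u/x_n$, then iterate a one-step wedge-closing lemma based on IHOL (Proposition \ref{Boundary Behaviour ffb}). The paper packages this as Propositions 10.1--10.2, tracking two-sided bounds $A_k x_n \le u \le B_k x_n$ with $B_k - A_k \le \delta_0^k$; your one-sided version $|u - A_k x_n|\le \mu^{k(1+\alpha_0)}$ is equivalent. Two points deserve attention.

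First, a minor repair in your flatness step: with $W=v/y_n\in[-D,D]$ on $B_{1/2}^+$ and $m$ the midpoint of $\sup W,\inf W$ on $B_{1/4}^+$, one only has $|W-m|\le D$ there, so $w_\pm=\pm(v-my_n)+\tfrac{D}{2}y_n$ need not be nonnegative. Shift by $D y_n$ instead (or, as the paper does, normalize first to $0\le u\le x_n$ and run the dichotomy on $u$ versus $x_n-u$). Also, since $v\equiv 0$ on $B'_1$, the boundary supremum in Proposition \ref{bdry-lip-type-estimate-scaled-version} already vanishes; you do not need Remark \ref{carleson-estimates-upgrade}.

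Second, and this is a genuine gap: your derivation of (\ref{krylov-uraltseva-oscillation-estimate}) is incomplete. Dividing (\ref{taylor-general-krylov}) by $x_n$ gives $|Q(x)-A(x_0)|\le C|x-x_0|^{\alpha_0}$ only when $x_0\in B'_{r/2}$ lies on the flat boundary. For two interior points $x,y\in B_{r/2}^+$ the triangle inequality through their boundary projections controls $|Q(x)-Q(y)|$ by $d_x^{\alpha_0}+|x_0-y_0|^{\alpha_0}+d_y^{\alpha_0}$, which is useless when $|x-y|\ll d_x$. The paper supplies the missing ingredient: interior Krylov--Safonov H\"older regularity of $u$ yields $[u]_{C^{\beta_0}(B_{d_x/2}(x))}\le C d_x^{-\beta_0}M$, and the product rule for $Q=u\cdot(1/x_n)$ gives $[Q]_{C^{\beta_0}(B_{d_x/2}(x))}\le C d_x^{-(1+\beta_0)}M$. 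One then splits into the cases $|x-y|\ge d_x^{p}/2$ (boundary estimate) and $|x-y|<d_x^{p}/2$ (interior estimate) for a suitable $p>1$, obtaining H\"older continuity of $Q$ with an exponent $\tau_0=\min\{\alpha_0/p,\ \beta_0-(1+\beta_0)/p\}$ that is in general strictly smaller than $\alpha_0$. This argument, adapted from Ros-Oton--Serra \cite{R-O-Se-1}, is carried out in the paper's proof and is not a consequence of (\ref{taylor-general-krylov})--(\ref{holder-estimate-krylov}) alone.
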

\begin{remark} The vector field $A: B'_{r/2} \to {\mathbb{R}}^n$ above should be thought as the gradient of $u$ along $B'_{r/2}.$
\end{remark}
\begin{remark} In the case we consider nonnegative functions in Theorem \ref{boundary krylov thm Lq version}, we can indeed replace the expression inside the parenthesis in (\ref{taylor-general-krylov}), (\ref{holder-estimate-krylov}) and (\ref{krylov-uraltseva-oscillation-estimate}) by the following one

$$  \bigg(\frac{{u(\frac{r}{2}e_{n})}}{r} + r^{1 - n/q} ||f||_{L^{q}(B_{r}^{+})} \bigg).$$

\noindent As before, this follows from the results in \cite{Braga-Moreira-Carleson}. In any case (independent of the sing of $u$),  we can replace the expression $|x|^{\alpha_0}\cdot x_{n}$ by $|x|^{1+\alpha_0}$ in (\ref{taylor-general-krylov}). In fact, in this case, the new taylor expansion in (\ref{taylor-general-krylov}) holds for all $ x_{0}\in B'_{1/2}$ and ~for all $ x\in B_{1}^{+}$ (see Remark \ref{extension-of-C-{1,alpha}-inequality-to-B1}). 
\end{remark}
\begin{definition}\label{definition-alpha00} We denote $\alpha_{00}\in (0,1)$ the exponent $\alpha_{0}$ of Theorem \ref{boundary krylov thm Lq version} in the case where $f\equiv 0,$ i.e, 
 \begin{equation}\label{alpha-zero}
\alpha_{00} = \alpha_{00}(n, \lambda, \Lambda) \in (0,1).
\end{equation}
 \end{definition}
\begin{remark}[{\bf Recovering Phargm\'en-Lindel\"of type result in half spaces}]\label{PLT}Let $u\in C^{0}(\mathbb{R}_{+}^{n})\cap S_{\lambda,\Lambda}(0)$ in $\mathbb{R}_{+}^{n}$ that vanishes on the flat boundary, i.e, $u=0$ in $\partial\mathbb{R}_{+}^{n}.$ Let $\alpha_{00}\in (0,1)$ given in Definition \ref{definition-alpha00}. Suppose that there exists a number $0<\beta<1+\alpha_{00}$ such that 
\begin{equation}\label{growth-condition-PL}
 |u(x)| \leq C_{0}|x|^{1+\beta}\ \forall x\in \mathbb{R}_{+}^{n}.
 \end{equation}
Then, 
\begin{equation}\label{classification}
u(x)=u(e_{n})\cdot x_{n} \quad \forall x\in \mathbb{R}_{+}^{n}.
\end{equation}
Additionally, if $u$ above is nonnegative (no growth condition apriori), it has to be of the form (\ref{classification}). We give a proof  (see the end of section 10) of these facts inspired by the very nice ideas of \cite{R-O-Se-2} in the nonlinear integral-differential operators context. For the nonnegative case, we simply observe that from the results of \cite{Braga-Moreira-Carleson}, we have $|u(x)|\leq C|x|$ for all $x\in\mathbb{R}_{+}^{n}.$ Thus, we can just take $\beta=1$ in the statement.  We point out here that this fact was also proven in \cite{boyan} in a more general situation. The context there was done for domains in conical shape. We are thankful to B. Sirakov that brought this fact to our attention and kindly explain to us the details of their (different) proof in \cite{boyan}. 
\end{remark}

We now state of the main result of this paper

\begin{theorem}[{{\bf Krylov boundary gradient type estimate under $C^{1,Dini}$ pointwise boundary regularity}}] \label{general-pointwise-boundary-krylov-general-boundary-data}
Let $u \in C^0(\overline{B}_1^+) \cap S^*(\gamma, f)$ in $B_1^+$ where $\gamma, f \in L^{q}(B_1^+)$ with $q > n$ and $\beta_{*}:=\min\{1-n/q, \alpha_{00}^{-} \}$. Assume the boundary data  $\varphi=u_{\mid_{B'_1}} $ satisfies $\varphi\in C^{1,\omega}(0)$ where $\omega\in \mathcal{DMC}(Q,\beta_{*})$ and let $L$ be the Taylor's polynomial of $\varphi$ at zero. Then, there exists a unique $\Psi_0 \in {\mathbb{R}}$ such that for all $x=(x',x_{n})\in B_{\delta_{\omega}}^+$,
\begin{equation}\label{eq-1-main-pointwise-result}
\left\vert u(x) - L(x',0) - \Psi_0 \cdot x_n \right\vert \leq T_{0}\left( {\Vert u \Vert}_{L^{\infty}(B_1^+)} +||\varphi||_{C^{1, \omega}}(0) + {\Vert f \Vert}_{L^{q}(B_1^+)} \right)  {\vert x \vert}\vartheta(|x|),
\end{equation}
\begin{equation}\label{eq-2-main-pointwise-result}
\left\vert \Psi_0 \right\vert \leq T_{0} \left( {\Vert u \Vert}_{L^{\infty}(B_1^+)} +||\varphi||_{C^{1, \omega}}(0) + {\Vert f \Vert}_{L^{q}(B_1^+)} \right),
\end{equation}
\noindent where
$$\vartheta(t):= t^{\beta_{*}}+\int_{0}^{t}\frac{\omega(s)}{s}ds \quad \textnormal { for  }\ t\in[0,\delta_{\omega}].$$ 
In particular, there exists the normal derivative
\begin{equation}\label{normal-derivative}
D_{x_{n}}u(0) := \lim\limits_{t\to 0^{+}} \frac{u(te_{n})-u(0)}{t}=\Psi_{0}.
\end{equation}
Here, the universal constant $T_{0}=T_{0}(n,q, \lambda, \Lambda, ||\gamma||_{L^{q}(B_{1}^{+})}, \alpha_{00}, \delta_{\omega}^{*}, \beta_{*},Q, \delta_{\omega},  \int_{0}^{\delta_{\omega}}\omega(s)s^{-1}ds).$ 
\end{theorem}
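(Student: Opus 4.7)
The plan is an improvement-of-flatness iteration at the origin, run at the geometric scales $r_{k}:=\mu^{k}$ for a small universal $\mu\in(0,\delta_{\omega}^{*})$. The principal obstruction, flagged in the introduction, is that the class $S^{*}(\gamma,f)$ is not invariant under rescaling, so at every step the rescaled drift and right-hand side must be reabsorbed into the induction hypothesis; the definition of $\vartheta$ and the $\beta_{*}$-compatibility of $\omega$ are engineered precisely to make this bookkeeping close.

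First I would reduce to flat boundary data at $0$. Let $L$ be the tangential Taylor polynomial of $\varphi$ at $0$ (affine, with $\partial_{x_{n}}L\equiv 0$ by Remark~\ref{taylor-on-the-fiber}), and set $v:=u-L$. Since $L$ is affine, $D^{2}L\equiv 0$, so $v\in S^{*}(\gamma,\widetilde f)$ with $\widetilde f:=f+\gamma|\nabla L|\in L^{q}(B_{1}^{+})$, and the new boundary data on $B'_{1}$ satisfies $|v(x',0)|\leq\|\varphi\|_{C^{1,\omega}(0)}|x'|\omega(|x'|)$. After a scalar normalization I may assume the relevant data are all bounded by $1$, so the task is to produce $\Psi_{0}\in\mathbb{R}$ with $|v(x)-\Psi_{0}x_{n}|\leq T_{0}|x|\vartheta(|x|)$.

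Next I would build inductively $\{a_{k}\}_{k\geq k_{0}}\subset\mathbb{R}$ satisfying
$$\|v-a_{k}x_{n}\|_{L^{\infty}(B_{\mu^{k}}^{+})}\leq C_{*}\mu^{k}\tau_{k},\qquad |a_{k+1}-a_{k}|\leq C_{*}\tau_{k},\qquad \tau_{k}:=\mu^{k\beta_{*}}+\omega(\mu^{k}).$$
The base case follows from Proposition~\ref{bdry-lip-type-estimate-scaled-version} applied to $v$ combined with the flat bound on $v|_{B'_1}$. For the inductive step, rescale $w_{k}(y):=[v(\mu^{k}y)-a_{k}\mu^{k}y_{n}]/[\mu^{k}\tau_{k}]$ on $B_{1}^{+}$. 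By the scaling of the Pucci operators, $w_{k}\in S^{*}(\widetilde\gamma_{k},\widetilde f_{k})$ with $\|w_{k}\|_{L^{\infty}}\leq C_{*}$, and two structural inequalities guarantee the rescaled problem is admissible uniformly in $k$: the rescaled $L^{q}$-norm of the RHS is at most $\mu^{k(1-n/q)}/\tau_{k}\leq 1$ (using $\beta_{*}\leq 1-n/q$ and $\tau_{k}\geq\mu^{k\beta_{*}}$), and the rescaled boundary data on $B'_{1}$ is at most $\omega(\mu^{k})/\tau_{k}\leq 1$. The Improvement of Flatness Lemma of Section~11, which rests on the homogeneous Krylov estimate of Theorem~\ref{boundary krylov thm Lq version} with exponent $\alpha_{00}$, then yields $b_{k}\in\mathbb{R}$ with $|b_{k}|\leq C$ and $\|w_{k}-b_{k}y_{n}\|_{L^{\infty}(B_{\mu}^{+})}\leq C\mu^{1+\alpha_{00}}$. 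Setting $a_{k+1}:=a_{k}+b_{k}\tau_{k}$ and undoing the scaling yields $\|v-a_{k+1}x_{n}\|_{L^{\infty}(B_{\mu^{k+1}}^{+})}\leq C\mu^{\alpha_{00}}\mu^{k+1}\tau_{k}$; the $\beta_{*}$-compatibility of $\omega$ (and of $t^{\beta_{*}}$, which inherits the same property trivially) gives $\tau_{k+1}\geq\mu^{\beta_{*}}\tau_{k}$, so choosing $\mu$ small enough universally with $C\mu^{\alpha_{00}-\beta_{*}}\leq C_{*}$ closes the induction (using that $\beta_{*}<\alpha_{00}$).

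Finally I would pass to the limit. Dini continuity of $\omega$ and monotonicity give $\sum_{k}\omega(\mu^{k})<\infty$ by direct comparison with $\int_{0}^{\delta_{\omega}}\omega(s)/s\,ds$, while $\sum_{k}\mu^{k\beta_{*}}$ is geometric, so $\sum_{k}\tau_{k}<\infty$ and $\{a_{k}\}$ is Cauchy. Define $\Psi_{0}:=\lim a_{k}$; estimate (\ref{eq-2-main-pointwise-result}) follows from the telescoping bound $|\Psi_{0}|\leq|a_{k_{0}}|+C_{*}\sum\tau_{k}\leq T_{0}$. For (\ref{eq-1-main-pointwise-result}), given $x\in B_{\delta_{\omega}}^{+}$ pick $k$ with $\mu^{k+1}\leq|x|\leq\mu^{k}$ and combine
$$|v(x)-\Psi_{0}x_{n}|\leq\|v-a_{k}x_{n}\|_{L^{\infty}(B_{\mu^{k}}^{+})}+|x_{n}|\sum_{j\geq k}|a_{j+1}-a_{j}|\leq C_{*}\mu^{k}\tau_{k}+C_{*}|x_{n}|\sum_{j\geq k}\tau_{j},$$
which is bounded by $C|x|\vartheta(|x|)$ once one notes $\sum_{j\geq k}\tau_{j}\leq C\vartheta(\mu^{k})$ (from the definition of $\vartheta$ and $\beta_{*}$-compatibility, which converts the discrete tail sum into the continuous integral $\int_{0}^{\mu^{k}}\omega(s)/s\,ds+\mu^{k\beta_{*}}$). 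The normal-derivative identity (\ref{normal-derivative}) is immediate from (\ref{eq-1-main-pointwise-result}) on the segment $x=te_{n}$. The principal technical difficulty is not the iteration itself but that the rescaled inequalities at every scale sit in genuinely different classes $S^{*}(\widetilde\gamma_{k},\widetilde f_{k})$; the $\beta_{*}$-compatibility is exactly what guarantees uniform-in-$k$ applicability of the flatness lemma and the passage from the discrete sums $\sum\tau_{k}$ to the integral defining $\vartheta$, distinguishing this argument from the bounded-coefficients iteration in \cite{SS}.
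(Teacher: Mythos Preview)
Your overall strategy is the same as the paper's: subtract the tangential Taylor polynomial, run an improvement-of-flatness iteration at geometric scales $\mu^{k}$ with the auxiliary modulus $\tau_{k}=\mu^{k\beta_{*}}+\omega(\mu^{k})$ (the paper writes this as $\omega_{0}(\mu_{*}^{k})$), and pass to the limit using Dini continuity. The base case, the use of the $\beta_{*}$-compatibility to close the induction, and the tail-sum estimate $\sum_{j\geq k}\tau_{j}\leq C\vartheta(\mu^{k})$ all match Section~12.

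There is, however, a genuine gap in your treatment of the drift. When you form $w_{k}(y)=[v(\mu^{k}y)-a_{k}\mu^{k}y_{n}]/[\mu^{k}\tau_{k}]$, by Remark~\ref{perturbation-by-linear} the subtraction of $a_{k}x_{n}$ contributes a term $\gamma|a_{k}|$ to the right-hand side, so
\[
\|\widetilde f_{k}\|_{L^{q}(B_{1}^{+})}\ \leq\ \frac{(\mu^{k})^{1-n/q}}{\tau_{k}}\Big(\|\widetilde f\|_{L^{q}}+|a_{k}|\,\|\gamma\|_{L^{q}}\Big),
\qquad
\|\widetilde\gamma_{k}\|_{L^{q}(B_{1}^{+})}\ \leq\ \|\gamma\|_{L^{q}}.
\]
Your check ``$\mu^{k(1-n/q)}/\tau_{k}\leq 1$'' controls only the $\widetilde f$ piece. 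The second piece is bounded by $|a_{k}|\,\|\gamma\|_{L^{q}}$, and $|a_{k}|\leq C_{*}\sum_{j<k}\tau_{j}$ is bounded uniformly in $k$ but by a constant depending on the Dini integral and on $C_{*}$ (which in turn comes from the flatness lemma). Likewise $\|\widetilde\gamma_{k}\|_{L^{q}}$ does \emph{not} decay with $k$; it is merely bounded by $\|\gamma\|_{L^{q}}$. Neither of these is made small by your ``scalar normalization'', which rescales $u$, $\varphi$, $f$ but leaves $\gamma$ untouched. Consequently the smallness hypothesis $\|\widetilde\gamma_{k}\|_{L^{q}}+\|\widetilde f_{k}\|_{L^{q}}+\|\varphi_{k}\|_{L^{\infty}}\leq\varrho_{0}$ of Proposition~\ref{improvement-of-flatness} is not verified, and the iteration does not close as written.

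The paper resolves this in two parts (see the proof of Theorem~\ref{pointwise-boundary-krylov-general-boundary-data}). In Part~I one \emph{assumes} $\|\gamma\|_{L^{q}}\leq\gamma_{0}$ for a universal $\gamma_{0}$ chosen so that $(1+N_{0}F_{0})\|\gamma\|_{L^{q}}\leq\varrho_{0}/4$, where $N_{0}$ controls $\sum_{j}\omega_{0}(\mu_{*}^{j})$; this absorbs both the $\|\widetilde\gamma_{k}\|_{L^{q}}$ term and the $\gamma|a_{k}|$ contribution. In Part~II one removes this assumption by a preliminary \emph{spatial} rescaling $u(r_{0}x)$ with $r_{0}=(\gamma_{0}/\|\gamma\|_{L^{q}})^{q/(q-n)}$, which shrinks $\|\gamma\|_{L^{q}}$ to $\gamma_{0}$ at the cost of an explicit factor $r_{0}^{-(2+\beta_{*})}$ in $T_{0}$ (hence the dependence in Remark~\ref{dependence-T0}). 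Your sketch needs this two-step structure; the $\beta_{*}$-compatibility alone does not handle it.
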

\begin{remark}\label{dependence-T0}
In fact, $T_{0}$ in the Theorem \ref{general-pointwise-boundary-krylov-general-boundary-data} above can be taken of the form 
$$T_{0}=J_{1}\cdot \delta_{\omega}^{-(1+\beta_{*})}\Big(1+||\gamma||_{L^{q}(B_{1}^{+})}\Big)\bigg(1+||\gamma||_{L^{q}(B_{1}^{+})}^{\frac{2+\beta_{*}}{1-{n}/{q}}}\bigg)$$
where  $J_{1}=J_{1}(n,q, \lambda, \Lambda, \alpha_{00}, \delta_{\omega}^{*}, \beta_{*},Q,  \int_{0}^{1}\omega(s)s^{-1}ds)>0$ is universal (see proof of Theorem \ref{general-pointwise-boundary-krylov-general-boundary-data}).
\end{remark}
\begin{remark} It is long known that even for Harmonic functions in half spaces the ``Dini character" of the boundary data is required to obtain at least a finite gradient on boundary (see \cite{widman}, Remark 1).
\end{remark}
\begin{remark}[{\bf Comments on the $\beta_{*}$ compatibility condition}]\label{comments-beta-compatibility} We observe that the condition ``$\varphi\in C^{1,\omega}(0)$ where $\omega\in \mathcal{DMC}(Q,\beta_{*})$" in the Theorem \ref{general-pointwise-boundary-krylov-general-boundary-data} is a natural one. Indeed, first, if the boundary data $\varphi$ is say $C^{1,\beta_{0}}$ and the RHS is in $L^{q}$ with $q>n$, we know that solution is $C^{1,\beta}$ (along the boundary) only for $\beta\leq1-n/q$ and $\beta<\alpha_{00}$. This way, this condition imposed on the modulus of continuity of the boundary data in Theorem  \ref{general-pointwise-boundary-krylov-general-boundary-data} is capturing this obstruction (see detailed discussion on the H\"older continuous boundary data in the sequel). Second, this condition is weaker than requiring Dini continuity of $\omega$ together with $\omega(t)/t^{\alpha}$ is decreasing (see Lemma \ref{properties-modulus-of-continuity} item $iii))$. This type of condition on the monotonicity of $\omega(t)/t^{\alpha}$ together with Dini continuity appeared in classical previous works on this subject (i.e, boundary regularity with $C^{1,Dini}$ boundary data) as one can see in the papers of G. Lieberman (\cite{Lieberman-CPDE}), K. Widman (\cite{widman}), M. Borsuk (\cite{Borsuk}) and M. Borsuk and V. Kondratiev's book \cite{Borsuk-Kondratiev} for linear equations in nondivergence and divergence forms. Moreover, our condition here seems to be weaker than the one imposed in the works if J. Kovats \cite{Kovats-1, Kovats-2} and more natural since it resembles the ones in K. Widman and M. Borsuk works. 
\end{remark}

\begin{corollary}\label{smooth-bdry-data}
Let $u \in C^0(\overline{B}_{1}^+) \cap S^*(\gamma, f)$ in $B_{1}^{+}$ with $\gamma, f \in L^{q}(B_{1}^+)$ where $q > n$. Assume the boundary data  $\varphi=u_{\mid_{B'_{1}}} \in C^{1, \omega}(B'_{1})$ with $\omega\in \mathcal{DMC}(Q,\beta_{*})$ where $\beta_{*}:=\min\{1-n/q, \alpha_{00}^{-}\}.$ Thus, there exist a unique vector field, $A: B'_{1/2} \to {\mathbb{R}}^n$  such that for every $x_0 \in B'_{1/2}$ \\
\begin{equation} \label{main-estimate-boundary-data-smooth}
\Big\vert u(x) - u(x_0) - A(x_0) (x - x_0) \Big\vert \leq T_{1} \cdot M \cdot \vert x - x_0 \vert\vartheta(|x-x_{0}|)\ \textnormal { for } \ |x-x_{0}|\leq \min\{\delta_{\omega},1/2 \},
\end{equation}
\begin{equation} \label{gradient Holder estimate at the boundary in half balls dini}
 {\Vert A \Vert}_{C^{0, \vartheta}(\overline{B'}_{1/2})}\leq T_{1} \cdot M,
\end{equation}
where
$$\vartheta(t):= t^{\beta_{*}}+\int_{0}^{t}\frac{\omega(s)}{s}ds \quad \textnormal { for  }\ t\in[0,\delta_{\omega}] \quad \textnormal { and }$$ 

$$  M:=\left( {\Vert u \Vert}_{L^{\infty}(B_1^+)} +{\Vert f \Vert}_{L^{q}(B_1^+)} + ||\varphi||_{C^{1, \omega}(B'_{1})}  \right).$$
Here $T_{1}=T_{1}\Big(n,q, \lambda, \Lambda, \gamma, \alpha_{00},\delta_{\omega}^{*}, \beta_{*},Q, \delta_{\omega},  \int_{0}^{\delta_{\omega}}\omega(s)s^{-1}ds, \min\{\delta_{\omega},1/2\}, \omega\big(\min\{1/8, \delta_{\omega}/4\}\big)\Big).$
\end{corollary}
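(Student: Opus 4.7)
The plan is to apply the pointwise Theorem \ref{general-pointwise-boundary-krylov-general-boundary-data} at every $x_{0}\in B'_{1/2}$ to define the vector field $A$, and then to promote the resulting pointwise Taylor expansions to the uniform modulus-of-continuity estimate (\ref{gradient Holder estimate at the boundary in half balls dini}) by a classical two-point comparison argument.

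First, I would apply Theorem \ref{general-pointwise-boundary-krylov-general-boundary-data} (in its translated/scaled form, Corollary \ref{scaled-general-pointwise-krylov}) at each $x_{0}\in B'_{1/2}$. Since $|x_{0}|<1/2$, the half-ball $B_{1/2}^{+}(x_{0})$ lies inside $B_{1}^{+}$, and the restriction of $\varphi$ satisfies $\varphi\in C^{1,\omega}(x_{0})$ with pointwise $C^{1,\omega}$-norm at $x_{0}$ bounded by $\|\varphi\|_{C^{1,\omega}(B'_{1})}$. This produces a unique $\Psi_{0}(x_{0})\in\mathbb{R}$ with $|\Psi_{0}(x_{0})|\leq T_{0}M$ and a pointwise expansion valid for $|x-x_{0}|\leq \min\{\delta_{\omega},1/2\}$. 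Setting $A(x_{0}):=\nabla\varphi(x_{0})+\Psi_{0}(x_{0})e_{n}$, and using that $\nabla\varphi(x_{0})$ is tangential while $(x_{0})_{n}=0$, the pointwise expansion rewrites precisely as (\ref{main-estimate-boundary-data-smooth}).

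The second (main) step is to control $|A(x_{0})-A(y_{0})|$ for $x_{0},y_{0}\in B'_{1/2}$ with $r:=|x_{0}-y_{0}|\leq r_{*}:=\min\{1/8,\delta_{\omega}/4\}$. I would use a test-point trick: take $\xi:=y_{0}+r e_{n}$ and subtract the expansion centered at $x_{0}$ (which is valid since $|\xi-x_{0}|\leq 2r\leq \delta_{\omega}$) from the one centered at $y_{0}$, so that $u(\xi)$ cancels. Using Remark \ref{classical-dini-taylor}, $\varphi(y_{0})-\varphi(x_{0})=\nabla\varphi(x_{0})\cdot(y_{0}-x_{0})+O(Mr\omega(r))$; since $y_{0}-x_{0}$ is tangential, $A(x_{0})\cdot(y_{0}-x_{0})=\nabla\varphi(x_{0})\cdot(y_{0}-x_{0})$, so all tangential contributions cancel and only $r\bigl(\Psi_{0}(x_{0})-\Psi_{0}(y_{0})\bigr)$ survives on the left. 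On the right, the two Taylor errors and the $\varphi$-remainder are each of order $Mr\vartheta(r)$ provided one has the two elementary facts $\omega(r)\leq C\vartheta(r)$ and $\vartheta(2r)\leq C\vartheta(r)$. Both follow from the $Q$-decreasing quotient property of Definition \ref{definition-MC}(a): integrating $\omega(s)/s\geq\omega(r)/(Qr)$ on $[r/2,r]$ yields $\omega(r)\leq 2Q\vartheta(r)$, after which doubling of $\vartheta$ follows from $\omega(2r)\leq 2Q\omega(r)$. Dividing by $r$ gives $|\Psi_{0}(x_{0})-\Psi_{0}(y_{0})|\leq CM\vartheta(r)$, which combined with $|\nabla\varphi(x_{0})-\nabla\varphi(y_{0})|\leq M\omega(r)\leq CM\vartheta(r)$ proves (\ref{gradient Holder estimate at the boundary in half balls dini}) in the small-scale regime.

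In the far-apart regime $r\in[r_{*},\operatorname{diam}(B'_{1/2})]$ the pointwise expansions no longer reach from $x_{0}$ to $y_{0}$; there I would simply use the uniform bound $|A|\leq CM$ from Step~1 together with $\vartheta(r)\geq\vartheta(r_{*})>0$ to conclude $|A(x_{0})-A(y_{0})|\leq 2CM\,\vartheta(r_{*})^{-1}\vartheta(r)$, which accounts for the dependence of $T_{1}$ on $\omega(\min\{1/8,\delta_{\omega}/4\})$. The hard part of the argument is the Step~2 cancellation: it depends crucially on the exact tangentiality of $y_{0}-x_{0}$ (to kill the mixed terms) and on the doubling of $\vartheta$ supplied by the $\mathcal{DMC}(Q,\beta_{*})$ structure (to absorb the $|\xi-x_{0}|\leq 2r$ contribution). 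Everything else is bookkeeping on the constants, and the listed dependence of $T_{1}$ reflects precisely these two ingredients together with the already-identified dependences of $T_{0}$ given in Remark \ref{dependence-T0}.
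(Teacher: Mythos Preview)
Your proposal is correct. The paper follows the same overall strategy---apply Theorem~\ref{general-pointwise-boundary-krylov-general-boundary-data} at each $x_0\in B'_{1/2}$ (after translating $x_0$ to the origin and rescaling by $\delta_*=\min\{\delta_\omega,1/2\}$) to define $A(x_0)=\nabla\varphi(x_0)+\Psi_0(x_0)\,e_n$, then compare nearby expansions for close pairs and fall back on the uniform $L^\infty$ bound on $A$ for far-apart pairs---but it packages the close-pair comparison differently. Instead of your single test point $\xi=y_0+r\,e_n$, the paper invokes its Appendix Lemma~\ref{taylor-2}: one takes the supremum of both expansions over the half-ball $\overline{B}^{+}_{d_0/2}(z_0)$ centered at the midpoint $z_0=(x_0+y_0)/2$ and then reads off $|A_n(x_0)-A_n(y_0)|$ from the gradient estimate for the harmonic function $\mu\,x_n$ on that half-ball, while the tangential part $\nabla\varphi$ is handled by the interior counterpart Lemma~\ref{taylor-1}. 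Your test-point trick is more elementary (no harmonic gradient estimates needed) and exploits the tangentiality of $y_0-x_0$ directly to isolate $\Psi_0(x_0)-\Psi_0(y_0)$; the paper's Lemma~\ref{taylor-2} is a reusable black box applicable beyond this corollary. Both routes require the doubling $\vartheta(2r)\le C\,\vartheta(r)$ supplied by Lemma~\ref{properties-modulus-of-continuity}(i) and yield the same dependence list for $T_1$. One minor slip: Corollary~\ref{scaled-general-pointwise-krylov} is the H\"older-data version, so the correct reference in your Step~1 is simply Theorem~\ref{general-pointwise-boundary-krylov-general-boundary-data} after translation and scaling, exactly as carried out in Section~\ref{proof-corollary-smooth-data}.
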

\begin{remark}\label{dependence-T1}The constant $T_{1}$ in Corollary \ref{smooth-bdry-data} above can be taken of the form
\begin{equation}\label{invocada-dependence}
T_{1}:=C\bigg(1+\frac{1}{\omega(\delta_{*}/4)}\bigg)\big(T_{0}\delta_{*}^{-(1+\beta_{*})}+1)
\end{equation}
where $C>2$ is a dimensional constant and $\delta_{*}=\min\{\delta_{\omega},1/2\}.$ The constant $T_{0}$ is the one given in Theorem \ref{general-pointwise-boundary-krylov-general-boundary-data} or Remark \ref{dependence-T0} with $\delta_{\omega}$ replaced by $\delta_{*}$. For details, see the proof of Corollary \ref{smooth-bdry-data}.
\end{remark}
Next, we state the global version of Krylov's result in $W^{2,q}-$domains and $C^{1,Dini}$ boundary data 
\begin{theorem}[{{\bf Global Krylov boundary gradient type estimate in $W^{2,q}-$domains $(q>n)$}}] \label{global-krylov-w2q-domains}
Let $u \in C^0(\overline{\Omega}) \cap S^*(\gamma, f)$ in $\Omega$ where $\gamma, f \in L^q(\Omega)$ with $q > n$ and $\Omega \subseteq {\mathbb{R}}^n$ be a bounded  $W^{2,q}$-domain. There exists a universal $\kappa_{\partial\Omega}=\kappa_{\partial\Omega}(n,\lambda, \Lambda, \partial\Omega)\in(0,1)$ such that if $u_{\mid_{\partial \Omega}} = \varphi \in C^{1, \omega}(\partial \Omega)$ where $\omega\in \mathcal{DMC}(Q,\beta_{*})$ for  
$${\beta}_{*} := \min \Big\lbrace 1 - \frac{n}{q}, \kappa_{\partial\Omega}^- \Big\rbrace, $$

\noindent  then $u \in C^{1, \vartheta}(\partial \Omega)$. More precisely, there exist a unique vector field $A : \partial \Omega \to {\mathbb{R}}^{n}$ and positive universal constants $T_{2}$ and $r_{0}=r_{0}(\partial\Omega)\leq \delta_{\omega}$ such that for every $b \in \partial \Omega\cap B_{r_{0}}(x_{0})$ and $x\in B_{r_{0}}(x_{0})\cap\Omega$ we have
\begin{equation} \label{main estimate w2q}
\Big\vert u(x) - u(b) - A (b) (x - b) \Big\vert \leq T_{2} \cdot \Big( {\Vert \varphi \Vert}_{C^{1, \omega}(\partial \Omega)} + {\Vert f \Vert}_{L^q(\Omega)} \Big) \vert x - b \vert\vartheta(|x-b|).
\end{equation}
Moreover,
\begin{equation} \label{gradient Holder estimate at the boundary w2q}
{\Vert A \Vert}_{C^{0, \vartheta}(\partial \Omega)} \leq T_{2} \cdot \Big( {\Vert \varphi \Vert}_{C^{1, \omega}(\partial \Omega)} + {\Vert f \Vert}_{L^q(\Omega)} \Big).
\end{equation}
Here, 
$$\vartheta(t):= t^{\beta_{*}}+\int_{0}^{t}\frac{\omega(s)}{s}ds \quad \textnormal { for  }\ t\in \left[0, {\delta_{\omega}} \right].$$ 
\noindent  Additionally,  
$$T_2 = T_2\Big(n,q,\lambda, \Lambda, ||\gamma||_{L^{q}(\Omega)}, \delta_{\omega}^{*}, \delta_{\omega}, \beta_{*}, Q, \int_{0}^{\delta_{\omega}}\omega(s)s^{-1}ds, \min\{1/2, \delta_{\omega}/K_{\partial\Omega}\}, w\big(\min\{1/8, \delta_{\omega}/4K_{\partial\Omega}\})\Big)$$
where $K_{\partial\Omega}>1$ depends on $\partial\Omega.$
\end{theorem}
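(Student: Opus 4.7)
The plan is to reduce to the flat-boundary Corollary \ref{smooth-bdry-data} by flattening $\partial\Omega$ locally via a $W^{2,q}$ diffeomorphism, and then patch the pointwise estimates together by a finite covering of $\partial\Omega$. Fix $x_0\in\partial\Omega$. By Definition \ref{definition-curved-domain} there exist $r_{0}=r_{0}(\partial\Omega)>0$, a rotation of coordinates, and a $W^{2,q}$ function $h:\mathbb{R}^{n-1}\to\mathbb{R}$ of controlled $W^{2,q}$-norm such that $\Omega\cap B_{r_{0}}(x_0)=\{x_n>h(x')\}\cap B_{r_{0}}(x_0)$. Define the standard straightening map $\Phi(x',x_n)=(x', x_n-h(x'))$, which is bi-Lipschitz with constants depending only on $\|\nabla h\|_{L^\infty}$ (so only on $\partial\Omega$). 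A rescaling $y=\Phi(x)/K_{\partial\Omega}$ maps a neighborhood of $x_0$ onto a half-ball $B_{\rho}^{+}$ with $\rho\leq 1$, and $\partial\Omega$ is sent into $B'_\rho$.

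Next I would track how the class $S^{*}(\gamma,f)$ behaves under $\Phi$. For a classical computation (and then extended by density/viscosity stability) the function $\tilde u := u\circ\Phi^{-1}$ satisfies a differential inequality of the same Pucci form, with a new drift coefficient
\[
\tilde\gamma(y) \;\leq\; C(\partial\Omega)\Big(\gamma(\Phi^{-1}y)+|D^{2}h(y')|\Big),
\]
and a new right-hand side $\tilde f$ of the same shape; the key point is that since $h\in W^{2,q}$ and $\gamma\in L^{q}(\Omega)$ with $q>n$, both $\tilde\gamma$ and $\tilde f$ lie in $L^{q}(B_{\rho}^{+})$ with norms controlled by $\|\gamma\|_{L^{q}(\Omega)}$, $\|f\|_{L^{q}(\Omega)}$ and the $W^{2,q}$-norm of $h$. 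The transformed boundary data $\tilde\varphi=\varphi\circ\Phi^{-1}|_{B'_\rho}$ is still in $C^{1,\omega}(B'_\rho)$ with $\|\tilde\varphi\|_{C^{1,\omega}}\leq C(\partial\Omega)\|\varphi\|_{C^{1,\omega}(\partial\Omega)}$, because composition with the affine-in-$x_n$ Lipschitz map $\Phi$ preserves a $C^{1,\omega}$ modulus (here one uses that $h$ itself is $C^{1,1-n/q}$ by Sobolev embedding, and the $\beta_{*}$-compatibility of $\omega$ ensures this extra Hölder modulus is dominated by $\vartheta$). I would define $\kappa_{\partial\Omega}:=\alpha_{00}$ (possibly truncated to account for the $W^{2,q}$ chart regularity), so that the compatibility hypothesis $\omega\in\mathcal{DMC}(Q,\beta_{*})$ translates verbatim to the flattened problem.

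Now I apply Corollary \ref{smooth-bdry-data} to $\tilde u$ on the flat half-ball. This produces a unique vector field $\tilde A:B'_{\rho/2}\to\mathbb{R}^n$ with the Taylor expansion (\ref{main-estimate-boundary-data-smooth}) and the $C^{0,\vartheta}$ bound (\ref{gradient Holder estimate at the boundary in half balls dini}) at every point of the flat piece that $\partial\Omega\cap B_{r_0}(x_0)$ was mapped to. Pulling back through $D\Phi^{-1}$ defines $A(b):=D\Phi^{-1}(b)^{T}\tilde A(\Phi(b))$ on $\partial\Omega\cap B_{r_0}(x_0)$, and since $\Phi$ is $W^{2,q}\subset C^{1,1-n/q}$ this transfer preserves the modulus $\vartheta$ (again by the compatibility relation on $\omega$). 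Distances are comparable, $|x-b|\sim|\Phi(x)-\Phi(b)|$, so (\ref{main estimate w2q}) follows directly from (\ref{main-estimate-boundary-data-smooth}) after adjusting constants; uniqueness of $A(b)$ comes from uniqueness in the flat statement.

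Finally, to globalize and obtain (\ref{gradient Holder estimate at the boundary w2q}), I cover $\partial\Omega$ by finitely many such charts and note that the constants produced depend only on the data listed in $T_{2}$. For the Hölder control of $A$ across $\partial\Omega$: two boundary points $b_{1},b_{2}$ lying in a common chart are handled by the local half-ball estimate; if they lie in different charts, either $|b_1-b_2|$ is comparable to the fixed covering scale (so the bound is trivial from $L^\infty$ control of $A$) or a short chain of overlapping charts connects them. The main technical obstacle I expect is the transformation step: one must check carefully that the class $S^{*}(\gamma,f)$ is stable under the $W^{2,q}$ flattening, i.e., that the second-derivative terms generated by the chain rule can genuinely be absorbed into an $L^q$ drift plus $L^q$ RHS without destroying the uniform ellipticity constants $\lambda,\Lambda$; this is exactly where $q>n$ (so $h\in C^{1,1-n/q}$) and the $\beta_{*}$-compatibility of $\omega$ are needed. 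Once that is in place, the rest of the argument is essentially bookkeeping that the structural exponents and universal constants propagate correctly from Corollary \ref{smooth-bdry-data} and Remark \ref{dependence-T1}.
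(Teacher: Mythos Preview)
Your proposal is correct and matches the paper's approach: the paper itself does not give a detailed proof of this theorem, stating only that the estimates ``hold in $W^{2,q}$ domains by `flattening out' the boundary type arguments'' and leaving the computations to the reader. Your outline --- straightening via $\Phi(x',x_n)=(x',x_n-h(x'))$, checking that $S^*(\gamma,f)$ is stable under this change of variables with the $D^2h$ terms absorbed into the $L^q$ drift, applying Corollary~\ref{smooth-bdry-data} on the flat half-ball, pulling back, and globalizing by a finite cover --- is exactly the intended argument, and your identification of the transformation step (ellipticity constants shift with $\|\nabla h\|_{L^\infty}$, whence $\kappa_{\partial\Omega}$ depends on $\partial\Omega$) as the only nontrivial point is accurate.
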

 \begin{remark} The constant $T_{2}$ in Theorem \ref{global-krylov-w2q-domains} above is of the form appearing in (\ref{invocada-dependence}) with  $\delta^{*}$ is replaced by $\min\{1/2, \delta_{\omega}/4K_{\partial\Omega}\}$ where as above $K_{\partial\Omega}>1$.
 \end{remark}
\begin{remark}[{\bf Krylov's boundary type estimates for all the coefficients unbounded}] We observe that all the results above (and the following $C^{1,\alpha}$ counterparts below) work for the classes $S^{*}(\gamma, \sigma, f)$ that has the zeroth order term $\sigma\in L_{+}^{q},$  drift term $\gamma\in L_{+}^{q}$ and RHS $f\in L^{q}$ for $q>n$~(see Remark \ref{general-classes}).
\end{remark}
From the previous Theorem, we conclude the existence of the normal derivative on a smooth boundary for $C^{1,Dini}$ boundary data. Thus,  we conclude immediately from Theorem \ref{IHOL} the following Corollary.
\begin{corollary}[{{\bf IHOL under pointwise $C^{1,Dini}-$boundary regularity}}]\label{hopf-C-1-alpha-tangentially} Let  $0\leq u\in C^{0}(\overline{B_{r}})$, $f\in L^{q}(B_{r})$ with $q>n$ and $0\leq r\leq R_{0}.$ Assume that $u \in S^{*}(\gamma,f)$ in $B_{r}$ with $x_{0}\in \partial B_{r}, u(x_{0})= 0$ and also that the boundary data $\varphi=u_{\mid_{B'_{r}}} \in C^{1, \omega}(x_{0})$ with
$$\omega\in \mathcal{DMC}(Q,\beta_{*}) \quad \textnormal {where} \quad \beta_{*}:=\min\{1-n/q, \alpha_{00}^{-}\}.$$
Then, there exists the inner normal derivative $\frac{\partial u}{\partial \nu}(x_{0})$ and the following estimate holds
\begin{equation}\label{hopf+krylov}
\frac{u(0)}{r} \leq T_{3} \cdot \Bigg(  \frac{\partial u}{\partial \nu}(x_{0}) + r^{1-n/q}||f||_{L^{q}(B_{r})}\Bigg).
\end{equation}
\noindent Here, $T_{3}$ is a positive universal constant that depends only on $n, q, \lambda, \Lambda, \gamma_{R_{0}}$. 
\end{corollary}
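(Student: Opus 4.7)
The plan is to combine Theorem \ref{general-pointwise-boundary-krylov-general-boundary-data} with Theorem \ref{IHOL}. The crucial observation is that estimate (\ref{BCN-Hopf}) in Theorem \ref{IHOL} is \emph{conditional} on the a priori existence of the inward normal derivative $\partial u/\partial\nu(x_0)$; to deduce the present corollary one only needs some structural hypothesis that guarantees this existence. The pointwise $C^{1,\omega}$ regularity of the boundary data with $\omega\in\mathcal{DMC}(Q,\beta_*)$ provides exactly that, via Theorem \ref{general-pointwise-boundary-krylov-general-boundary-data}. In particular, the dependence of $T_3$ listed in the statement matches that of $C_3$ from Theorem \ref{IHOL} precisely because the Krylov-type result is used only qualitatively.

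The first step is a local flattening of $\partial B_r$ at $x_0$. Since $\partial B_r$ is $C^\infty$, there exists a $C^{1,1}$ diffeomorphism $\Phi$ sending a one-sided neighborhood of $x_0$ in $\overline{B_r}$ onto a relative neighborhood of the origin in $\overline{B_\rho^+}$, with $\Phi(x_0)=0$, $\Phi(\partial B_r\cap U)\subset\{x_n=0\}$ and $D\Phi(x_0)\nu=e_n$. Under $\Phi$ the class $S^*(\gamma,f)$ transforms into an analogous class $S^*(\tilde\gamma,\tilde f)$ with $\tilde\gamma,\tilde f\in L^q$ (at the price of harmless universal adjustments of $\lambda,\Lambda$ and of the drift bound), while the hypothesis $\varphi\in C^{1,\omega}(x_0)$ transfers to a $C^{1,\omega}(0)$ regularity for the transformed boundary trace, with $\omega$ unchanged (the diffeomorphism contributes only a multiplicative constant into the seminorm).

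Next I would invoke Theorem \ref{general-pointwise-boundary-krylov-general-boundary-data} on the flattened problem. Since $\omega\in\mathcal{DMC}(Q,\beta_*)$ with $\beta_*=\min\{1-n/q,\alpha_{00}^-\}$, the theorem applies and produces a unique $\Psi_0\in\mathbb{R}$ satisfying (\ref{normal-derivative}); pulling back through $\Phi^{-1}$, this is exactly the statement that the inward normal derivative $\partial u/\partial\nu(x_0)$ exists and equals $\Psi_0$. I emphasize that this step is used \emph{only qualitatively}: the constant $T_0$ produced by Theorem \ref{general-pointwise-boundary-krylov-general-boundary-data} (with its many dependencies on $\omega$, $Q$, $\beta_*$, etc.) never enters the final estimate.

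With the existence of $\partial u/\partial\nu(x_0)$ now secured, all hypotheses of the second estimate (\ref{BCN-Hopf}) of Theorem \ref{IHOL} are met (back in the original spherical coordinates), and we read off
\[
\frac{u(0)}{r}\leq C_3\left(\frac{\partial u}{\partial\nu}(x_0)+r^{1-n/q}\|f\|_{L^q(B_r)}\right),
\]
with $C_3=C_3(n,q,\lambda,\Lambda,\gamma_{R_0})$; setting $T_3:=C_3$ completes the argument. The only technical point is the flattening in the first step, which is entirely routine given the $C^\infty$ character of $\partial B_r$; the conceptual content is simply that Theorem \ref{general-pointwise-boundary-krylov-general-boundary-data} removes the existence hypothesis from the IHOL by converting the pointwise $C^{1,Dini}$ boundary data assumption into the sought classical differentiability of $u$ at $x_0$ in the normal direction.
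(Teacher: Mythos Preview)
Your proposal is correct and follows essentially the same approach as the paper. The paper's ``proof'' is just the sentence preceding the corollary: the global Krylov-type result (Theorem \ref{global-krylov-w2q-domains}, which absorbs the flattening you spell out) yields existence of $\partial u/\partial\nu(x_0)$, and then one quotes estimate (\ref{BCN-Hopf}) from Theorem \ref{IHOL}; you have simply unpacked the flattening step explicitly and invoked the half-ball version (Theorem \ref{general-pointwise-boundary-krylov-general-boundary-data}) instead, with the same qualitative/quantitative split you correctly emphasize.
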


\subsection{Scaled H\"older regularity versions of the previous results} In what follows, we present the statements of the previous results in a scaled form for the case boundary data is $C^{1,\beta_{0}}.$ They may be useful in some circumstances to the readers and they also make contact with the estimates with the zero boundary data case estimates (\ref{taylor-general-krylov}) and (\ref{holder-estimate-krylov}).  Even for the case where boundary data is $C^{1,\beta_{0}}$ these estimates are new, since they involve unbounded coefficients $(\gamma, f).$ The relevant observation here is that if $\varphi\in C^{1,\beta_{0}}$ and $\beta_{**}:=\min\{1-n/q, \beta_{0}, \alpha_{00}^{-}\}$ then we clearly have $\varphi\in C^{1,\beta_{**}}.$ This way (see example \ref{example-holder}) the modulus of continuity $\omega(t)=t^{\beta_{**}} \in \mathcal{DMC}(1, \beta_{*})$ where $\beta_{*}=\min\{1-n/q,\alpha_{00}^{-}\}$ since $\beta_{*}\geq \beta_{**}.$ The next results follow immediately from the previous $C^{1,Dini}$ estimates and the scaling Remark \ref{scaling-remark}.

\begin{corollary}[{\bf H\"older scaled version of Theorem \ref{general-pointwise-boundary-krylov-general-boundary-data}}]\label{scaled-general-pointwise-krylov} Let  $ u \in C^0(\overline{B}_{r}^+) \cap S^*(\gamma, f) $  in $B_{r}^{+}$ where $f \in L^{q}(B_{r}^{+})$ with $q > n$ and $0\leq r\leq R_{0}$. Let $ \beta_{*}=\min\{1-n/q, \beta_{0}, \alpha_{00}^{-}\}$ and assume that $\varphi=u_{\mid_{B'_r}}\in C^{1,\beta_{0}}(0)$. Then $u\in C^{1,\beta_{*}}(0)$. More precisely, the estimates (\ref{eq-1-main-pointwise-result}) and (\ref{eq-2-main-pointwise-result}) become respectively
\begin{equation}\label{eq-1-main-pointwise-result-scaled}
\left\vert u(x) - L(x',0) - \Psi_0 \cdot x_n \right\vert \leq \overline{T_{0}}\cdot r^{-(1+\beta_{*})}\cdot M \cdot {\vert x \vert}^{1 + \beta_{*}} \quad \forall x\in B_{r}^{+}, 
\end{equation}
\begin{equation}\label{eq-2-main-pointwise-result-scaled}
r\cdot \left\vert \Psi_0 \right\vert \leq \overline{T_{0}}\cdot M,
\end{equation}
$$ M:=\left( {\Vert u \Vert}_{L^{\infty}(B_r^+)} + r^{2-n/q}{\Vert f \Vert}_{L^{q}(B_r^+)} + ||\varphi||_{C^{1, \beta_{0}}}^{*}(0)  \right). 
$$
Here, $\overline{T_{0}}=\overline{T_{0}}(n,q, \lambda, \Lambda, \alpha_{00}, \beta_{0}, R_{0}^{1-n/q}||\gamma||_{L^{q}(B_{R_{0}}^{+})})$ is a universal constant.
\end{corollary}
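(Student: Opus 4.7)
The plan is to deduce this corollary directly from Theorem \ref{general-pointwise-boundary-krylov-general-boundary-data} by a judicious choice of modulus of continuity together with the scaling Remark \ref{scaling-remark}. First I would pass to the unit scale by setting $\tilde u(y) := u(ry)/r$ for $y \in B_1^+$, so that $\tilde u \in C^0(\overline{B_1^+}) \cap S^*(\tilde\gamma, \tilde f)$ with $\tilde\gamma(y) = r\gamma(ry)$ and $\tilde f(y) = r f(ry)$. A direct change of variables gives $\|\tilde f\|_{L^q(B_1^+)} = r^{1-n/q}\|f\|_{L^q(B_r^+)}$ and, since $r \le R_0$, the bound $\|\tilde\gamma\|_{L^q(B_1^+)} \le R_0^{1-n/q}\|\gamma\|_{L^q(B_{R_0}^+)}$. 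The rescaled boundary data $\tilde\varphi(y') := \varphi(ry')/r$ is pointwise $C^{1,\beta_0}$ at the origin, and a short computation confirms $\|\tilde\varphi\|^*_{C^{1,\beta_0}}(0) = r^{-1}\|\varphi\|^*_{C^{1,\beta_0}}(0)$; combined with the two identities above, this gives $\widetilde M = r^{-1}M$, which is precisely the scaling that produces the weighted quantity $M$ of the statement.

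Next I would set $\omega(t) := t^{\beta_*}$ with $\beta_* = \min\{1-n/q, \beta_0, \alpha_{00}^-\}$, and let $\widetilde\beta := \min\{1-n/q,\alpha_{00}^-\}$, the exponent actually appearing in Theorem \ref{general-pointwise-boundary-krylov-general-boundary-data}. By example \ref{example-holder}, $\omega \in \mathcal{DMC}(1,\widetilde\beta)$, since the $\widetilde\beta$-compatibility $\mu^{\widetilde\beta}\omega(\mu^k\delta)\le \omega(\mu^{k+1}\delta)$ reduces to $\mu^{\widetilde\beta}\le \mu^{\beta_*}$, which is automatic from $\widetilde\beta \ge \beta_*$. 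Because $\beta_* \le \beta_0$, the hypothesis $\tilde\varphi\in C^{1,\beta_0}(0)$ immediately yields $\tilde\varphi \in C^{1,\omega}(0)$, with controlled seminorm.

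I now apply Theorem \ref{general-pointwise-boundary-krylov-general-boundary-data} to $\tilde u$. The Dini integral reduces to $\int_0^t \omega(s)/s\,ds = t^{\beta_*}/\beta_*$, hence
$$\vartheta(t)= t^{\widetilde\beta}+ t^{\beta_*}/\beta_* \ \le\ (1+1/\beta_*)\,t^{\beta_*}\qquad\textrm{for } t\le 1,$$
using $\widetilde\beta\ge \beta_*$. Plugging this into the conclusion of Theorem \ref{general-pointwise-boundary-krylov-general-boundary-data} produces the unit-scale versions of (\ref{eq-1-main-pointwise-result-scaled})--(\ref{eq-2-main-pointwise-result-scaled}). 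Scaling back through $x=ry$ then inserts the factor $r^{-(1+\beta_*)}$ in front of $|x|^{1+\beta_*}$ and, combined with $\widetilde M = r^{-1}M$, recovers exactly the weighted $M$-dependence of the statement. The second estimate follows from $|\Psi_0| = |\widetilde\Psi_0| \le T_0 \widetilde M = T_0 r^{-1}M$.

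The main obstacle I expect is verifying that the sprawling dependence of $T_0$ recorded in Remark \ref{dependence-T0} collapses, in this pure H\"older case, to the clean list $\bigl(n,q,\lambda,\Lambda,\alpha_{00},\beta_0, R_0^{1-n/q}\|\gamma\|_{L^q(B_{R_0}^+)}\bigr)$ claimed for $\overline T_0$. This reduces to choosing $\delta_\omega = 1$ (so that $\delta_\omega^*$, $Q$, and $\int_0^{\delta_\omega}\omega(s)/s\,ds = 1/\beta_*$ all become explicit functions of $\beta_*$), observing that $\beta_*$ is itself determined by $n,q,\alpha_{00},\beta_0$, and exploiting the uniform drift control $\|\tilde\gamma\|_{L^q(B_1^+)}\le R_0^{1-n/q}\|\gamma\|_{L^q(B_{R_0}^+)}$ established in the scaling step. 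The residual factor $1+1/\beta_*$ from $\vartheta$ is absorbed into a single universal constant.
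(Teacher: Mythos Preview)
Your proposal is correct and follows exactly the approach indicated by the paper: the authors state just before the corollary that ``the next results follow immediately from the previous $C^{1,Dini}$ estimates and the scaling Remark \ref{scaling-remark},'' after noting that $\omega(t)=t^{\beta_{**}}\in\mathcal{DMC}(1,\beta_*)$ via Example \ref{example-holder}. Your write-up simply fills in those details --- the rescaling $\tilde u(y)=u(ry)/r$, the choice $\omega(t)=t^{\beta_*}$, the estimate $\vartheta(t)\le(1+1/\beta_*)t^{\beta_*}$, and the collapse of the constant's dependence --- and there is nothing to correct.
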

\begin{corollary}[{\bf H\"older scaled version of Corollary \ref{smooth-bdry-data}}]\label{scaled-general-pointwise-krylov} Let  $ u \in C^0(\overline{B}_{r}^+) \cap S^*(\gamma, f) $  in $B_{r}^{+}$ where $f \in L^{q}(B_{r}^{+})$ with $q > n$ and $0\leq r\leq R_{0}$. Let $ \beta_{*}=\min\{1-n/q, \beta_{0}, \alpha_{00}^{-}\}$ and assume that $\varphi=u_{\mid_{B'_r}}\in C^{1,\beta_{0}}(B'_{r})$. Then $u\in C^{1,\beta_{*}}(B_{r}^{+})$. More precisely, there exists a unique vector field, $A:B'_{r}\to\mathbb{R}^{n}$ such that the estimates (\ref{main-estimate-boundary-data-smooth}) and (\ref{gradient Holder estimate at the boundary in half balls dini}) become respectively
\begin{equation} \label{main estimate}
\Big\vert u(x) - u(x_0) - A(x_0) (x - x_0) \Big\vert \leq \overline{T_{1}} \cdot r^{-(1+ {\beta}_{*})} \cdot M \cdot \vert x - x_0 \vert^{1 + {\beta}_{*}},
\end{equation}
\begin{equation} \label{gradient Holder estimate at the boundary in half balls}
r\cdot {\Vert A \Vert}_{C^{0, \beta_{*}}(B'_{1/2})}^{*} \leq \overline{T_{1}} \cdot M,
\end{equation}
where
$$  M:=\left( {\Vert u \Vert}_{L^{\infty}(B_{r}^{+})} + r^{2-n/q}{\Vert f \Vert}_{L^{q}(B_{r}^{+})} + ||\varphi||_{C^{1, \beta_{0}}(B'_{r})}^{*}  \right).$$
Here $\overline{T_{1}}=\overline{T_{1}}(n,q, \lambda, \Lambda, \alpha_{00}, \beta_{0},R_{0}^{1-n/q}||\gamma||_{L^{q}(B_{R_{0}}^{+})})$ is a universal constant.
\end{corollary}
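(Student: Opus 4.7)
The plan is to reduce the statement to the Dini-modulus version (Corollary \ref{smooth-bdry-data}) via the scaling invariance of the Pucci classes combined with the specific choice $\omega(t)=t^{\beta_*}$. First I would rescale by setting $\tilde u(y):=u(ry)$ for $y\in B_1^+$, which belongs to $S^*(\tilde\gamma,\tilde f)$ in $B_1^+$ with $\tilde\gamma(y)=r\gamma(ry)$ and $\tilde f(y)=r^2 f(ry)$ (this is the content of Remark \ref{scaling-remark}). The relevant norms transform cleanly: $\|\tilde\gamma\|_{L^q(B_1^+)}=r^{1-n/q}\|\gamma\|_{L^q(B_r^+)}\leq R_0^{1-n/q}\|\gamma\|_{L^q(B_{R_0}^+)}$ and $\|\tilde f\|_{L^q(B_1^+)}=r^{2-n/q}\|f\|_{L^q(B_r^+)}$, while the boundary trace $\tilde\varphi(y'):=\varphi(ry')$ on $B'_1$ satisfies $\|\tilde\varphi\|_{C^{1,\beta_0}(B'_1)}^*\leq\|\varphi\|_{C^{1,\beta_0}(B'_r)}^*$ directly from the weighted norm convention in (\ref{weighted-pointwise-c1-alpha-norm}).

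Next I would choose $\omega(t):=t^{\beta_*}$ on $[0,1]$ and verify $\omega\in\mathcal{DMC}(1,\beta_*)$: the quotient $q(t)=t^{\beta_*-1}$ is monotone decreasing (so the $Q$-decreasing quotient property holds with $Q=1$); Dini continuity is immediate since $\int_0^1 t^{\beta_*-1}\,dt=1/\beta_*<\infty$; and the $\beta_*$-compatibility is the trivial identity $\mu^{\beta_*}\omega(\mu^k\delta)=\omega(\mu^{k+1}\delta)$, so $\delta_\omega^*=1$ works. Because $\beta_*\leq\beta_0$, the embedding $C^{1,\beta_0}(B'_1)\hookrightarrow C^{1,\omega}(B'_1)$ holds with a constant depending only on $\beta_0$, whence $\tilde\varphi\in C^{1,\omega}(B'_1)$ and the hypotheses of Corollary \ref{smooth-bdry-data} are satisfied by $\tilde u$. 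That corollary produces a unique $\tilde A:B'_{1/2}\to\mathbb{R}^n$ obeying (\ref{main-estimate-boundary-data-smooth}) and (\ref{gradient Holder estimate at the boundary in half balls dini}); in our case $\vartheta(t)=t^{\beta_*}+\int_0^t s^{\beta_*-1}ds=(1+1/\beta_*)\,t^{\beta_*}\leq C(\beta_*)\,t^{\beta_*}$, which is precisely the H\"older modulus required.

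Defining $A(x_0):=r^{-1}\tilde A(x_0/r)$ on $B'_{r/2}$ and pulling back to the original variables yields (\ref{main estimate}) with the claimed factor $r^{-(1+\beta_*)}$; the corresponding weighted H\"older estimate (\ref{gradient Holder estimate at the boundary in half balls}) follows analogously, the multiplicative $r$ on the left accounting for the factor $r^{-1}$ built into $A$. To obtain the vector field on all of $B'_r$ as written in the statement, one covers $B'_r$ by shifted half-balls and repeats the argument at each flat boundary point, uniqueness ensuring that the local vector fields agree on overlaps.

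I expect the main (though essentially routine) obstacle to be the bookkeeping of the constant $\overline{T_1}$. Tracking back through Corollary \ref{smooth-bdry-data}, the dependence of $T_1$ on $\int_0^{\delta_\omega}\omega(s)s^{-1}\,ds$, on $\omega(\min\{1/8,\delta_\omega/4\})$, on $\delta_\omega^*$, on $Q$ and on $\delta_\omega$ collapses, for $\omega(t)=t^{\beta_*}$ and $\delta_\omega=1$, to an explicit function of $\beta_*$ alone; the dependence on $\|\tilde\gamma\|_{L^q(B_1^+)}$ becomes a dependence on $R_0^{1-n/q}\|\gamma\|_{L^q(B_{R_0}^+)}$ after unscaling. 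This produces exactly the dependence $\overline{T_1}(n,q,\lambda,\Lambda,\alpha_{00},\beta_0,R_0^{1-n/q}\|\gamma\|_{L^q(B_{R_0}^+)})$ asserted in the statement.
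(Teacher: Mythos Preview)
Your proposal is correct and follows essentially the same route as the paper. The paper does not spell out a proof for this corollary; it simply observes (in the paragraph preceding the scaled H\"older statements) that ``the next results follow immediately from the previous $C^{1,Dini}$ estimates and the scaling Remark~\ref{scaling-remark}'', after noting that $\omega(t)=t^{\beta_{**}}\in\mathcal{DMC}(1,\min\{1-n/q,\alpha_{00}^{-}\})$ via Example~\ref{example-holder}. Your argument is exactly this, with the bookkeeping made explicit: rescale to $B_1^+$, take $\omega(t)=t^{\beta_*}$, invoke Corollary~\ref{smooth-bdry-data}, observe $\vartheta(t)=(1+1/\beta_*)t^{\beta_*}$, and scale back. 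One small notational point worth flagging: the $\beta_*$ in Corollary~\ref{smooth-bdry-data} is $\min\{1-n/q,\alpha_{00}^-\}$, which in general is \emph{larger} than the $\beta_*$ of the present statement (which also includes $\beta_0$); your verification of $\beta_*$-compatibility therefore needs the observation (already in Example~\ref{example-holder}) that $t^\alpha$ is $\beta$-compatible for every $\beta\geq\alpha$, not just for $\beta=\alpha$.
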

\begin{theorem}[{{\bf Global Krylov boundary H\"older gradient type estimate in $W^{2,q}-$domains $(q>n)$}}] \label{global-krylov-w2q-domains-holder}
Let $u \in C^0(\overline{\Omega}) \cap S^*(\gamma, f)$ in $\Omega$ where $\gamma, f \in L^q(\Omega)$ with $q > n$ and $\Omega \subseteq {\mathbb{R}}^n$ be a bounded  $W^{2,q}$-domain. There exists a universal $\kappa_{\partial\Omega}=\kappa_{\partial\Omega}(n,\lambda, \Lambda, \partial\Omega)\in(0,1)$ such that if $u_{\mid_{\partial \Omega}} = \varphi \in C^{1, \beta_{0}}(\partial \Omega)$ and ${\beta}_{*} := \min \Big\lbrace 1 - \frac{n}{q}, \beta_{0}, \kappa_{\partial\Omega}^- \Big\rbrace, $

\noindent  then $u \in C^{1, \beta_{*}}(\partial \Omega)$. Precisely, there exist a unique vector field $A : \partial \Omega \to {\mathbb{R}}^{n}$ and positive universal constants $\overline{T_{2}}$ and $r_{0}=r_{0}(\partial\Omega)\leq \delta_{\omega}$ such that for every $b \in \partial \Omega\cap B_{r_{0}}(x_{0})$ and $x\in B_{r_{0}}(x_{0})\cap\Omega$ we have
\begin{equation} \label{main estimate w2q}
\Big\vert u(x) - u(b) - A (b) (x - b) \Big\vert \leq \overline{T_{2}} \cdot \Big( {\Vert \varphi \Vert}_{C^{1, \beta_{0}}(\partial \Omega)} + {\Vert f \Vert}_{L^q(\Omega)} \Big) \vert x - b \vert^{1+\beta_{*}}.
\end{equation}
Moreover,
\begin{equation} \label{gradient Holder estimate at the boundary w2q}
{\Vert A \Vert}_{C^{0, \beta_{*}}(\partial \Omega)} \leq \overline{T_{2}} \cdot \Big( {\Vert \varphi \Vert}_{C^{1, \beta_{0}}(\partial \Omega)} + {\Vert f \Vert}_{L^q(\Omega)} \Big).
\end{equation}
Here,  $\overline{T_2} = \overline{T_2}(n,q,\lambda, \Lambda, ||\gamma||_{L^{q}(\Omega)}, \beta_{0}, \kappa_{\Omega}, \partial\Omega).$
\end{theorem}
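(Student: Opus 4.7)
The plan is to obtain Theorem \ref{global-krylov-w2q-domains-holder} as a direct specialization of its Dini counterpart Theorem \ref{global-krylov-w2q-domains}, by taking the power modulus of continuity $\omega(t) := t^{\beta_{*}}$ with $\beta_{*} = \min\{1-n/q,\beta_0,\kappa_{\partial\Omega}^-\}$. Concretely, I would first verify that $\omega$ lies in the class $\mathcal{DMC}(1,\beta_*)$, second observe the inclusion $C^{1,\beta_0}(\partial\Omega)\hookrightarrow C^{1,\omega}(\partial\Omega)$, and third transcribe the output of Theorem \ref{global-krylov-w2q-domains} back into pure H\"older form.

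For the first step, the $Q$-decreasing quotient property with $Q=1$ is immediate since $q(t)=t^{\beta_*-1}$ is nonincreasing on $(0,1]$; the Dini integrability reduces to $\int_0^{\delta_\omega} s^{\beta_*-1}\,ds = \delta_\omega^{\beta_*}/\beta_* < \infty$; and the $\beta_*$-compatibility between scales holds with equality for every $\delta_\omega^*\in(0,1]$ and every $k_0\in\mathbb{N}$, because
\[
\mu^{\beta_*}\,\omega(\mu^k\delta) \;=\; \mu^{(k+1)\beta_*}\delta^{\beta_*} \;=\; \omega(\mu^{k+1}\delta).
\]
For the second step, since $\beta_*\leq\beta_0$ and $\partial\Omega$ is bounded, any $\varphi\in C^{1,\beta_0}(\partial\Omega)$ satisfies $\|\varphi\|_{C^{1,\omega}(\partial\Omega)}\leq C(\beta_0-\beta_*,\mathrm{diam}\,\Omega)\cdot \|\varphi\|_{C^{1,\beta_0}(\partial\Omega)}$. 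Moreover, the auxiliary function appearing in Theorem \ref{global-krylov-w2q-domains} computes explicitly to
\[
\vartheta(t)=t^{\beta_*}+\int_0^t s^{\beta_*-1}\,ds=\frac{1+\beta_*}{\beta_*}\,t^{\beta_*},
\]
so $|x-b|\,\vartheta(|x-b|)$ is comparable to $|x-b|^{1+\beta_*}$ and the $C^{0,\vartheta}$ seminorm of $A$ on $\partial\Omega$ is comparable to its $C^{0,\beta_*}$ seminorm.

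Plugging these two observations into Theorem \ref{global-krylov-w2q-domains} immediately produces estimates (\ref{main estimate w2q}) and (\ref{gradient Holder estimate at the boundary w2q}). The main obstacle I expect is merely bookkeeping the constants: one must check that the dependencies of the Dini constant $T_2$ collapse into those stated for $\overline{T_2}$. With $\omega(t)=t^{\beta_*}$ one can take $\delta_\omega=\delta_\omega^*=1$ and $Q=1$, while $\int_0^1 s^{\beta_*-1}\,ds = 1/\beta_*$ and the quantities $\min\{1/2,\delta_\omega/K_{\partial\Omega}\}$ and $\omega(\min\{1/8,\delta_\omega/(4K_{\partial\Omega})\})$ become functions of $\beta_0$ and $\kappa_{\partial\Omega}$ alone. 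No genuinely new analytic difficulty arises beyond this tracking, since the heavy lifting---the improvement-of-flatness iteration in the unbounded coefficients setting on a $W^{2,q}$ domain---has already been performed in the proof of Theorem \ref{global-krylov-w2q-domains}.
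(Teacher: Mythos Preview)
Your approach is correct and matches the paper's: Section~4.1 explicitly derives the H\"older statements from their Dini counterparts by taking $\omega(t)=t^{\beta_{**}}$ with $\beta_{**}=\min\{1-n/q,\beta_0,\kappa_{\partial\Omega}^-\}$, citing Example~\ref{example-holder} for membership in $\mathcal{DMC}(1,\beta_*)$. One minor bookkeeping caution: the $\beta_*$ appearing in the hypothesis of Theorem~\ref{global-krylov-w2q-domains} is $\min\{1-n/q,\kappa_{\partial\Omega}^-\}$ (without $\beta_0$), so the first summand of $\vartheta(t)$ is $t^{\min\{1-n/q,\kappa_{\partial\Omega}^-\}}$ rather than $t^{\beta_*}$; since that exponent is at least your $\beta_*$, you still obtain $\vartheta(t)\leq C\,t^{\beta_*}$ on $[0,1]$ and the conclusion is unaffected.
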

\begin{remark} $\overline{T_{1}},\overline{T_{2}},\overline{T_{3}}$ have similar structural dependence that the corresponding $T_{1},T_{2}, T_{3}$ in the Dini case.  

\end{remark}

\section{Structural Conditions for the PDEs}
We now introduce the structural conditions for the PDEs that appear in this paper. We start by recalling the Pucci extremal operators. Let us denote $\mathcal{S}^{n\times n}$ the space of symmetric matrices of order $n$. For $0<\lambda\leq\Lambda$, ~~$\mathcal{M}_{\lambda, \Lambda}^{\pm}\colon \mathcal{S}^{n\times n}\to\mathbb{R}$ are given by

\begin{equation}
\mathcal{M}_{\lambda, \Lambda}^{-}(M) = \lambda\cdot\sum\limits_{e_{i}>0} e_{i} + \Lambda\cdot\sum\limits_{e_{i}<0} e_{i}, \quad \mathcal{M}_{\lambda, \Lambda}^{+}(M) = \Lambda\cdot\sum\limits_{e_{i}>0} e_{i} + \lambda\cdot\sum\limits_{e_{i}<0} e_{i}, 
\end{equation}
\noindent where $e_{i}$ are the eigenvalues of $M$.  We recall that 
\begin{equation}\label{Pucci-attained}
\mathcal{M}_{\lambda, \Lambda}^{-}(M)=\inf\limits_{M\in\mathcal{A}_{\lambda, \Lambda}}Trace(AM), \quad \mathcal{M}_{\lambda, \Lambda}^{+}(M)=\sup\limits_{M\in\mathcal{A}_{\lambda, \Lambda}}Trace(AM)
\end{equation}
 \begin{equation}\label{A-lambda-Lambda}
 \mathcal{A}_{\lambda, \Lambda}:=\Big\{A\in\mathcal{S}^{n\times n}; ~~\lambda I_{n} \leq A\leq \Lambda I_{n} \Big\}.
 \end{equation}
 It is easy to verify that the infimum and supremum above are attained. Also for $\gamma\geq 0$ a measurable function, we define the Pucci operators $\mathcal{P}_{\lambda, \Lambda, \gamma}^{\pm}\colon\mathcal{S}^{n\times n}\times\mathbb{R}^{n}\times\Omega\to\mathbb{R}$ are defined by 
\begin{equation}
\mathcal{P}_{\lambda, \Lambda, \gamma}^{-}(M,p,x) =\mathcal{M}_{\lambda, \Lambda}^{-}(M)-\gamma(x)|p|, \quad \mathcal{P}_{\lambda, \Lambda, \gamma}^{+}(M,p,x) =\mathcal{M}_{\lambda, \Lambda}^{+}(M)+\gamma(x)|p|.\\
\end{equation}

Here we often make use of scaling arguments. These scalings in general affects the function $\gamma$ of the Pucci operators $\mathcal{P}_{\lambda, \Lambda, \gamma}^{\pm}$, but leave the parameters $\lambda, \Lambda$ untouched. For this reason and to simplify notation, we  denote the Pucci operators $\mathcal{P}_{\lambda, \Lambda, \gamma}^{\pm}$ by $\mathcal{P}_{\gamma}^{\pm}$  to keep track of these  changes. Whenever clarification becomes necessary, we mention all the ellipticity constants explicitly. To simplify matters, we also make use of the following notation, (we drop the dependence on $\Omega$ in the symbols since the context is clear)

$$\mathcal{P}_{\gamma}^{\pm}[u](x)=\mathcal{P}_{\lambda, \Lambda, \gamma}^{\pm}[u](x):= \mathcal{P}_{\lambda, \Lambda, \gamma}^{\pm}(D^{2}u(x), \nabla u(x)),$$ 
 
$$\overline{S}(\gamma, f) = \overline{S}(\lambda, \Lambda, \gamma, f) =\Big\{ u\in C^{0}(\Omega); \:\mathcal{P}_{\gamma}^{-}[u](x)\leq f(x) \: \textnormal { in } \: \Omega \: \textnormal { in the } L^{n}-\textnormal{viscosity sense }  \Big\}, $$

$$\underline{S}(\gamma, f) = \underline{S}(\lambda, \Lambda, \gamma, f) =\Big\{ u\in C^{0}(\Omega); \:\mathcal{P}_{\gamma}^{+}[u](x)\geq f(x) \: \textnormal { in } \: \Omega \: \textnormal { in the } L^{n}-\textnormal{viscosity sense }\Big\},$$

$$ S(\gamma, f) = \underline{S}(\gamma, f) \cap \overline{S}(\gamma, f), \quad  S^{*}(\gamma, f) =  \underline{S}(\gamma, -|f|) \cap \overline{S}(\gamma, |f|).$$\vspace{1mm}

As said before, $\gamma$ sometimes denotes a nonnegative constant and sometimes a nonnegative measurable  function. In the latter case, this will be always indicated in the context or in the statements of the corresponding results. So, unless indicated otherwise, $\gamma$ is a nonnegative constant.
\begin{remark}[{\bf $L^{n}-$viscosity solutions}]\label{type-of-viscosity-solution-involved} We recall that $u\in C^{0}(\Omega)$ is a solution to $\mathcal{P}_{\gamma}^{+}[u]\geq f$ in $\Omega$ in the $L^{n}-$viscosity sense if for any $\phi\in W_{loc}^{2,n}(\Omega)$ such that $u-\phi$ has a local maximum at $x_{0}\in\Omega$ we have 

$$ess\limsup\limits_{x\to x_{0}} \Big(\mathcal{P}_{\gamma}^{+}[\varphi] - f(x)\Big) = ess\limsup\limits_{x\to x_{0}} \Big(\mathcal{M}_{\lambda, \Lambda}^{+}(D^2\phi(x))+\gamma(x)|\nabla\phi(x)| - f(x) \Big)\geq 0.$$

\noindent We say that  $\mathcal{P}_{\gamma}^{-}[u]\leq f$ in $\Omega$ in the $L^{n}-$viscosity sense if for any $\phi\in W_{loc}^{2,n}(\Omega)$ such that $u-\phi$ has a local mimimum at $x_{0}\in\Omega$ we have 
$$ess\liminf\limits_{x\to x_{0}} \Big(\mathcal{P}_{\gamma}^{-}[\varphi] - f(x)\Big) = ess\liminf\limits_{x\to x_{0}} \Big(\mathcal{M}_{\lambda, \Lambda}^{-}(D^2\phi(x))-\gamma(x)|\nabla\phi(x)| - f(x) \Big)\leq 0.$$

Unless otherwise explicitly stated, all the viscosity concepts in this paper are considered in the $L^{n}-$viscosity sense (even when $f\in L^{\infty}(\Omega)$). We point out however that for all the classes defined above, in the case $\gamma$ is a nonnegative  number,  the $L^{n}$ and $L^{q}$ viscosity concepts coincide whenever $f\in L^{q}(\Omega)$ with $n\leq q <\infty$. The same apply to the of $L^{n}$ and $C-$viscosity concepts  whenever $f\in C^{0}(\Omega)$. Clearly, if $f\in L^{\infty}(\Omega)$, $L^{n}$ and $L^{q}$ viscosity concepts coincide for all $n < q < \infty$. These results are consequence of the Theorem 2.1 in \cite{CKSS}. In this paper, we freely use $L^{n}-$viscosity theory as it appears in \cite{CCKS} and \cite{Koike-Swiech-WHUnbounded}. For the $C-$viscosity theory see \cite{CC}. 
\end{remark}
\begin{remark}\label{general-classes} We observe that the results of this paper also apply to classes involving the zeroth order term, as long as the involved functions are bounded. In fact, we recall the following general Pucci extremal operators, $\mathcal{P}_{\Lambda, \lambda, \gamma, \sigma}^{\pm}\colon \mathcal{S}^{n\times n}\times \mathbb{R}^{n}\times\mathbb{R}\to\mathbb{R}\times\Omega$  given by 

$$ \mathcal{P}_{\gamma, \sigma}^{-}(M,p,z,x)=\mathcal{P}_{\Lambda, \lambda, \gamma, \sigma}^{-}(M,p,z,x):=\mathcal{M}_{\lambda,\Lambda}^{-}(M) - \gamma(x)\cdot |p| +\sigma(x) \cdot z,  \quad \gamma \in L_{+}^{q}(\Omega), \ \sigma \in L^{q}(\Omega), \quad q>n,$$

$$ \mathcal{P}_{\gamma, \sigma}^{+}(M,p,z,x)=\mathcal{P}_{\Lambda, \lambda, \gamma, \sigma}^{-}(M,p,z,x):=\mathcal{M}_{\lambda, \Lambda}^{+}(M) + \gamma(x)\cdot |p| + \sigma(x) \cdot z,   \quad\gamma \in L_{+}^{q}(\Omega), \ \sigma \in L^{q}(\Omega),\quad q>n.$$

\noindent  Similarly as before, we can define  
$$\mathcal{P}_{\gamma,\sigma}^{\pm}[u](x):=\mathcal{P}_{\gamma,\sigma}^{\pm}(D^2u(x), \nabla u(x), u(x))$$ 
$$ \overline{S}(\gamma, \sigma, f), \quad \underline{S}(\gamma, \sigma, f), \quad S(\gamma, \sigma, f) \ \ \textrm{ and } \ \  S^{*}(\gamma, \sigma, f).$$
\noindent Now, 
$$ \overline{S}(\gamma, \sigma, f)\subset \overline{S}(\gamma, f+\sigma\cdot u), \quad  \underline{S}(\gamma, \sigma, f)\subset \underline{S}(\gamma, f+\sigma\cdot u), $$ \vspace{.2mm}
$${S}(\gamma, \sigma, f)\subset {S}(\gamma, f+\sigma\cdot u), \quad S^{*}(\gamma, \sigma, f) \subset S^{*}(\gamma, f+\sigma|u|).$$\\
\noindent since if $u\in L^{\infty}$ then $||f+\sigma u||_{L^{q}}\leq ||f+\sigma |u| ||_{L^{q}} \leq ||f||_{L^{q}} + ||\sigma||_{L^{q}}\cdot ||u||_{L^{\infty}}.$ In the case $\gamma$ is a nonnegative constant and $\sigma$ is a nonpositive constant then similar considerations as in Remark \ref{type-of-viscosity-solution-involved} regarding the equivalence of $L^{n}, L^{q} (n\leq q<\infty)$ viscosity concepts for the classes above apply here. This also follows from Theorem 2.1 in \cite {CKSS} since the Pucci operators $\mathcal{P}_{\gamma, \sigma}^{\pm}$ are monotone decreasing in the variable $z$.
\begin{remark}[{{\bf Monotonicity in $\gamma$}}]\label{monotonicity-in-gamma} Whenever $\gamma_{0}$ is a nonnegative constant we have, in the respective domains, 
$$ \gamma(x) \leq \gamma_{0} \Longrightarrow \underline{S}(\gamma, f) \subseteq \underline{S}(\gamma_{0}, f), \quad \overline{S}(\gamma, f) \subseteq \overline{S}(\gamma_{0}, f), \quad S^{*}(\gamma, f) \subseteq S^{*}(\gamma_{0}, f)$$
 Thus, whenever $\gamma$ is a nonnegative constant, we may replace whenever convenient the dependence on $\gamma$ in the universal constants by any  $\gamma_{0} \geq \gamma$.
\end{remark}

\end{remark}
\begin{remark}[{\bf Scaling remark}]\label{scaling-remark} Suppose $u\in \underline{S}(\gamma(x); f)$ in $\Omega$. Let us set
$$\Omega_{\beta}=\beta^{-1}\cdot\Omega:=\Big\{x; ~~\beta x \in\Omega \Big\}.$$
Then, defining $v(x):=\alpha u(\beta x)$ for $x\in\Omega_{\beta}$, we conclude that $v\in \underline{S}(\overline{\gamma}, \overline{f})$ in $\Omega_{\beta}$ where 
$$ \overline{\gamma}(x):=\beta\gamma(\beta x), \quad \overline{f}(x):=\alpha\beta^{2}f(\beta x).$$
Clearly, similar observations hold for the classes $\underline{S}(\gamma; f), {S}(\gamma; f)$ and ${S}^{*}(\gamma; f).$ These scaling properties are easy to check. The proof follow the spirit of Lemma 2.12 in \cite{CC} for $C-$viscosity solutions. Many times in this paper, we use the scaling $v(x)=u(rx)/r$. In this case, $\overline{f}(x)=rf(rx)$.
 \end{remark}
 \begin{remark}\label{scaling-pointwise-dini} Suppose $u:B_{1}\to\mathbb{R}$ is such that $u\in C^{1,\omega}(0).$ Let $0 <r< 1$ and define $v(x):=u(rx)$ for $x\in B_{1}$. In order to simplify our discussion, suppose the tangent plane of $u$ at zero is zero (i.e, Taylor's polynomial at zero is zero). Then, for $x\in B_{1}$ such that $|x|\leq\delta_{\omega}$ we have
 
 $$ |v(x)|=|u(rx)| \leq [u]_{C^{1,\omega}(0)}|rx|\omega(|rx|)\leq[u]_{C^{1,\omega}(0)} |x|\omega(|x|).$$
 This implies that $[v]_{C^{1,\omega}(0)}\leq [u]_{C^{1,\omega}(0)}.$
 \end{remark}

\section{Examples and Properties of modulus of continuity}
We now discuss some examples of modulus of continuity for which our Theorem applies. 

\begin{example}[{\bf H\"older modulus of continuity}]\label{example-holder} Let $\alpha\in(0,1]$ and consider the following function $\omega(t):=t^{\alpha}.$ Clearly, $\omega$ is nonnegative, increasing, continuous in $[0,1]$ and $\omega(0)=0.$ Let us set $\delta_{\omega}^{*}=\delta_{\omega}=1.$ The quotient $q(t)=\omega(t)/t = t^{\alpha-1}$  is decreasing. It is elementary to check that the Dini continuity is satisfied. Finally, for any $\mu, \delta \in(0,1]$ and $\beta\in [\alpha,1]$ we find  $\mu^{\beta}\omega(\delta\mu^{k})\leq \mu^{\alpha}\omega(\delta\mu^{k})=\mu^{\alpha}\delta^{\alpha}\mu^{k\alpha} \leq \delta^{\alpha}\mu^{k\alpha}\mu^{\alpha}=\omega(\delta\mu^{k+1}).$ Thus, $\omega$ satisfies the $\beta-$compatibility condition between scales. 
\end{example}
\begin{example}[{\bf Pure Dini modulus of continuity}] Let us consider the function $\omega(t)=(\ln(t^{-1}))^{\gamma}$ where $\gamma<-1.$ Clearly, $\lim_{t\to 0^{+}} \omega(t) =0$. Also, for $q(t)=\omega(t)/t$, we have $q'(t)<0 \Longleftrightarrow t\in(0,e^{\gamma})$ and $\omega'(t)>0$ in $(0,1)$. Moreover, setting $\delta_{\omega}=e^{\gamma}\in(0,1)$ for $0<\mu<1$ we have for any $\delta\in(0,1]$

$$\frac{\omega(\delta\mu^{k+1})}{\omega(\delta\mu^{k})} = \bigg(\frac{ln(1/\mu)}{ln(1/\delta_{\omega} \mu^{k})} + 1\bigg)^{\gamma} \geq 2^{\gamma} \geq \mu^{\beta} \quad \textnormal { for } \mu\in (0,2^{\gamma/\beta}],$$
since the expression inside the parenthesis is less than 2 and $\gamma<0.$ This way, if $\beta\in (0,1)$, we define $\delta_{\omega}^{*}:=2^{\gamma/\beta}\in (0,1).$ Thus, $\omega$ satisfies the $\beta-$compatibility condition between scales. Also, making the change of variables $ln(1/t)=s$, we conclude that $\omega$ is a Dini modulus of continuity since for $0<r<\delta_{\omega}$
$$\int_{0}^{r} t^{-1}\big(ln(t^{-1})\big)^{\gamma}dt = \int_{ln (r^{-1})}^{\infty}s^{\gamma}ds=-\frac{(ln(r^{-1}))^{\gamma+1}}{(\gamma+1)}. $$
Furthermore, $\omega(t)$ cannot be controlled by any H\"older type modulus of continuity, say $\leq Ct^{\alpha}$ with $\alpha\in(0,1]$. Indeed, suppose by contradiction that, $\omega(t)\leq Ct^{\alpha}$ for all $t$ small enough and $C>0$. Set $\gamma=-\mu$ where $\mu>0$. Thus,  making the change of variables $\ln(1/t)=x$ we find
\begin{equation}\label{tricky-change-of-variables-2}
0<C^{-1}\leq\limsup\limits_{t\to 0} t^{\alpha}\big(\omega(t)\big)^{-1}=\limsup\limits_{t\to 0} t^{\alpha}\bigg(\ln(t^{-1})\bigg)^{\mu} = \lim\limits_{x\to \infty}e^{-\alpha x}x^{\mu}=0,
\end{equation}
which is clearly a contradiction.
\end{example}

\begin{example}[{\bf Mixed type modulus of continuity}] Let us define $\omega(t)=t^{\alpha}\Big(\ln(t^{-1})\Big)^{\gamma}$ where $\alpha\in (0,1)$ and $\gamma\in\mathbb{R}.$ We already analyzed in the first example the case where $\gamma=0.$ Now we divide our analysis in two cases:\\

\noindent {\bf {\underline {Case 1:}} $\gamma >0.$}\  In this case, by doing the change of variables as in (\ref{tricky-change-of-variables-2}) we arrive to $\lim\limits_{t\to 0}w(t)=0.$ Furthermore, $\omega(t)$ is nonnegative and continuous for $0\leq t\leq 1$. The quotient $q(t)=\omega(t)/t$ is decreasing in $[0,1)$. We observe that  $\omega'(t) \geq 0 \Longleftrightarrow t\in (0, e^{-\gamma/\alpha}]$ and  $e^{-\gamma/\alpha} <1.$  In particular, we can take $\delta_{\omega}=e^{-\gamma/\alpha}.$ Now, we have for $\alpha\leq \beta\in (0,1], \delta\in(0,1]($since $\gamma >0$ and $0\leq A_{k}\leq 1)$ (below)

\begin{equation}\label{tricky-computation-beta-compatibility}
\frac{\omega(\delta\mu^{k+1})}{\omega(\delta\mu^{k})} = \mu^{\alpha} \bigg(\underbrace{\frac{ln(1/\mu)}{ln(1/\delta_{\omega} \mu^{k})} }_{A_{k}}+ 1\bigg)^{\gamma} =\mu^{\alpha} \geq \mu^{\beta} \quad \textnormal { for } \mu\in (0,1).
\end{equation}

\noindent So, we can take $\delta_{\omega}^{*}=1$ and $\omega$ satisfies the $\beta-$compatibility condition. Furthermore, $0<r<\delta_{\omega}$ we obtain by the change of variables $\ln(t^{-1})=s$ that 
\begin{equation}\label{dini-double-analysis}
\int_{0}^{r}t^{\alpha-1}\Big(\ln(t^{-1})\Big)^{\gamma}dt=\int_{\ln(r^{-1})}^{\infty} x^{\gamma} e^{-\alpha x}dx <\infty. 
\end{equation}

\noindent Thus, expression (\ref{dini-double-analysis}) implies that $\omega$ is Dini continuity independently of the sign of $\gamma.$ \\

\noindent {\bf {\underline {Case 2:}} $\gamma <0.$}\ It is clear that $\lim\limits_{t\to 0}\omega(t)=0.$ Again, $\omega(t)$ is nonnegative and continuous for $0\leq t\leq 1.$ We observe that $\omega(t)$ is nondecreasing for all $t\in[0,1).$ Furthermore, for the quotient $q(t)=\omega(t)/t$, we have $q'(t)\leq 0 \Longleftrightarrow t\in [0,e^{\gamma/(1-\alpha)}]$ and $e^{\gamma/(1-\alpha)}<1.$ The Dini continuity of $\omega$ was already established.

 Let $\alpha<\beta <1$. We set $\delta_{\omega}:=e^{\gamma/(1-\alpha)}\in(0,1).$ Now since $\gamma <0$ and $0\leq A_{k}\leq 1$  we have by (\ref{tricky-computation-beta-compatibility}) for all $\delta\in(0,1]$

$$ \frac{\omega(\delta\mu^{k+1})}{\omega(\delta\mu^{k})} = \mu^{\alpha} \bigg(\frac{ln(1/\mu)}{ln(1/\delta \mu^{k})} + 1\bigg)^{\gamma} \geq 2^{\gamma}\mu^{\alpha} =2^{\gamma}\mu^{\alpha-\beta}\mu^{\beta} \geq  \mu^{\beta} \quad \textnormal { for } \mu\in (0,2^{\frac{\gamma}{\beta-\alpha}}).$$
We then define $\delta_{\omega}^{*}=2^{\frac{\gamma}{\beta-\alpha}}.$ This way, $\omega$ satisfies the $\beta-$compatibility condition.
\end{example}
\begin{remark} Unlike the papers \cite{Kovats-1, Kovats-2},  we observe that our assumptions on the modulus of continuity do include the case $\omega\equiv 0$. 
\end{remark}

We now prove some properties of the modulus of continuity. 

\begin{lemma}[{\bf Properties of the modulus of continuity}]\label{properties-modulus-of-continuity} Let $\omega$ be a modulus of continuity. \begin{itemize}
\item[$i$)] If $\omega$ satisfies the $Q-$decreasing quotient property $\big($a) in Definition \ref{definition-MC}$\big)$ then
$$ \omega(t) \leq Q\int_{0}^{t}\frac{\omega(s)}{s}ds \quad \forall t\in[0,\delta_{\omega}];$$ 
$$ \int_{0}^{\Theta t}\frac{\omega(s)}{s}ds \leq {2Q^{2}}\Theta\int_{0}^{t}\frac{\omega(s)}{s}ds \quad \forall t\in [0,{\delta_{\omega}}/\Theta] \ \textnormal{ whenever } \ \Theta \geq 1.$$
\end{itemize}
\indent In particular, 
\begin{itemize}
\item [] $$ \int_{0}^{\mu^{k}}\frac{\omega(s)}{s}ds \leq \frac{2Q^{2}}{\mu}\int_{0}^{\mu^{k+1}}\frac{\omega(s)}{s}ds \quad \forall \mu\in [0,\delta_{\omega}], \ \ \forall k\geq 0.$$
\item[$ii)$] If $\omega$ is a Dini modulus of continuity then for $\mu\in\big(0,\min\{\delta_{\omega},1/e\}\big]$ we have for $k\geq 0$
 $$ \sum\limits_{j=k}^{\infty}\omega(\mu^{j}) \leq \omega(\mu^{k}) +  \int_{0}^{\mu^{k}}\frac{\omega(t)}{t}dt \leq  (2Q) \int_{0}^{\mu^{k}}\frac{\omega(t)}{t}dt.$$
 \item[$iii)$] Assume that $\omega(t)/t^{\alpha}$ is decreasing in $(0,\delta_{\omega}]$ for some $\alpha\in(0,1)$. Then, $\omega$ satisfies properties $(a)$ and $(c)$ $($with $\beta=\alpha$ and $\delta_{\omega}^{*}=1)$ in Definition \ref{definition-MC}. If additionally, $\omega$ is Dini continuous $($i.e, it satisfies $(b)$ in Definition \ref{definition-MC}$)$ then $\omega\in \mathcal{DMC}(1,\alpha).$

\end{itemize}
\end{lemma}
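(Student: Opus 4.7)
My plan is to prove the three parts in order, since (ii) uses (i) and (iii) is essentially independent. Throughout, the essential technique is to compare $\omega(t)/t$ at different scales via the $Q$-decreasing quotient, together with monotonicity of $\omega$.

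For (i), the first inequality follows by noting that the hypothesis $q(t) \leq Q\,q(s)$ for $s \leq t$ gives $\omega(s)/s \geq \omega(t)/(Qt)$; integrating this over $[0,t]$ yields $\int_0^t \omega(s)/s\,ds \geq \omega(t)/Q$. For the second, I would split $\int_0^{\Theta t} = \int_0^t + \int_t^{\Theta t}$. On $[t,\Theta t]$, the same $Q$-decreasing quotient applied with $s \geq t$ gives $\omega(s)/s \leq Q\omega(t)/t$, so $\int_t^{\Theta t}\omega(s)/s\,ds \leq Q\omega(t)\,\Theta$. Using the first inequality to replace $\omega(t)$ by $Q\int_0^t\omega(s)/s\,ds$ produces the factor $Q^{2}\Theta$, and since $Q,\Theta \geq 1$ the two pieces combine into $2Q^{2}\Theta\int_0^t\omega(s)/s\,ds$. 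The ``in particular'' statement is just the case $t = \mu^{k+1}$, $\Theta = 1/\mu$.

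For (ii), the key pointwise bound is $\omega(\mu^{j+1}) \leq \int_{\mu^{j+1}}^{\mu^j}\omega(s)/s\,ds$, obtained by observing that monotonicity of $\omega$ gives $\omega(\mu^{j+1}) \leq \omega(s)$ on $[\mu^{j+1},\mu^j]$ and that $\int_{\mu^{j+1}}^{\mu^j} ds/s = \ln(1/\mu) \geq 1$ thanks to $\mu \leq 1/e$. Summing this bound over $j \geq k$ telescopes the right-hand side into $\int_0^{\mu^k}\omega(s)/s\,ds$ and yields $\sum_{j\geq k+1}\omega(\mu^j) \leq \int_0^{\mu^k}\omega(s)/s\,ds$; adding $\omega(\mu^k)$ to both sides produces the middle inequality. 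The final $\leq 2Q\int$ follows by invoking (i) once more to absorb the lone $\omega(\mu^k)$ term.

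For (iii), the pointwise identity
\begin{equation*}
\frac{\omega(r)}{r} \;=\; \frac{\omega(r)}{r^{\alpha}}\,r^{\alpha-1} \;\leq\; \frac{\omega(h)}{h^{\alpha}}\,h^{\alpha-1} \;=\; \frac{\omega(h)}{h}, \qquad 0 < h \leq r \leq \delta_\omega,
\end{equation*}
shows that $q$ is literally decreasing, so the $Q$-decreasing quotient holds with $Q=1$. The $\beta$-compatibility with $\beta=\alpha$ is immediate: applying the hypothesis at scales $\mu^{k+1}\delta \leq \mu^{k}\delta$ rearranges to $\omega(\mu^{k+1}\delta) \geq \mu^{\alpha}\omega(\mu^{k}\delta)$ for every $\mu \in (0,1)$, so $\delta_\omega^{*}=1$ works. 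Combined with Dini continuity, one concludes $\omega \in \mathcal{DMC}(1,\alpha)$. The arguments are elementary bookkeeping with comparisons of integrals, sums and pointwise inequalities; the only subtle choice is the restriction $\mu \leq 1/e$ in (ii), which is precisely what guarantees $\ln(1/\mu) \geq 1$ and lets the estimate close without an additional $\mu$-dependent constant.
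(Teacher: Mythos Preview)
Your proposal is correct and follows essentially the same approach as the paper: the same splitting $\int_0^{\Theta t}=\int_0^t+\int_t^{\Theta t}$ in (i), the same pointwise bound $\omega(\mu^{j+1})\le\int_{\mu^{j+1}}^{\mu^j}\omega(s)/s\,ds$ via $\ln(1/\mu)\ge1$ in (ii), and the same factorization $q(t)=\big(\omega(t)/t^\alpha\big)\cdot t^{\alpha-1}$ in (iii). Your write-up is in fact slightly more explicit than the paper's in justifying why the two pieces in (i) combine to $2Q^2\Theta$.
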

\begin{proof} Let us denote
$$ \omega_{1}(t)=\int_{0}^{t}\frac{\omega(s)}{s}ds.$$ From the $Q-$decreasing quotient property we see that for $t\in[0,\delta_{\omega}]$

$$\omega_{1}(t)=\int_{0}^{t}\frac{\omega(s)}{s}ds\geq Q^{-1}\frac{\omega(t)}{t}\int_{0}^{t}ds =Q^{-1}\omega(t). $$

\noindent Thus, by $Q-$decreasing property and the result proven above (recalling that $\mu\in (0,1))$

\begin{eqnarray*}
 \omega_{1}(\Theta t) = \int_{0}^{\Theta t} \frac{\omega(s)}{s}ds& =& \int_{0}^{t} \frac{\omega(s)}{s}ds  + \int_{t}^{\Theta t} \frac{\omega(s)}{s}ds \\
 & = & \omega_{1}(t) + Q  \int_{t}^{\Theta t}\frac{\omega(t)}{t}ds\\
 & \leq &\omega_{1}(t) + {Q}\Theta\omega(t)\\ \
 & \leq & {\omega_{1}(t)} + Q^{2}\Theta\omega_{1}(t)\\
 & =& 2Q^{2}\Theta\omega_{1}(t).
\end{eqnarray*}
The last inequality follows from the previous one just by taking $\Theta=\mu^{-1}$ and $t=\mu^{k+1}\in (0,\delta_{\omega}/\Theta).$
This proves $i).$ Now, by monotonicity and from the fact  $\mu\in(0,\min\{\delta_{\omega}, 1/e\}]$ and $j\geq 0$
$$\int_{\mu^{j+1}}^{\mu^{j}}\frac{\omega(s)}{s}ds\geq\int_{\mu^{j+1}}^{\mu^{j}} \frac{\omega(\mu^{j+1})}{s}ds = \omega(\mu^{j+1})\ln(\mu^{-1}) \geq \omega(\mu^{j+1}). $$
Now, we estimate for $k\geq 0$
\begin{eqnarray*}
\sum\limits_{j=k}^{N}\omega(\mu^{j})& \leq& \omega(\mu^{k}) + \sum\limits_{j=k}^{N}\omega(\mu^{j+1}) \\
&\leq &  \omega(\mu^{k}) +\sum\limits_{j=k}^{N}\int_{\mu^{j+1}}^{\mu^{j}} \frac{\omega(s)}{s}ds \\
&=&  \omega(\mu^{k}) + \int_{\mu^{N+1}}^{\mu^{k}} \frac{\omega(s)}{s}ds\\
& \leq & (2Q)\omega_{1}(\mu^{k}). 
\end{eqnarray*}
Letting $N\to\infty$, we finish the of $ii)$ in the Lemma.\\

\noindent  Finally, to prove $iii)$, we  observe that if we define $q_{\alpha}(t):=\omega(t)/t^{\alpha}$ for $t\in(0,\delta_{\omega}]$ then $q(t)=\frac{q_{\alpha}(t)}{t^{1-\alpha}}$ which is clearly decreasing since the denominator is increasing. Moreover, for any $ \mu\in(0,1)$ and any $\delta\in(0,\delta_{\omega}]$ we clearly have $0<\delta\mu^{k+1}\leq \delta \mu^{k}\leq \delta_{\omega}$ and 
 $$ \frac{\omega(\delta \mu^{k+1})}{(\delta\mu^{k+1})^{\alpha}} \geq  \frac{\omega(\delta \mu^{k})}{(\delta\mu^{k})^{\alpha}} \Longrightarrow \omega(\delta\mu^{k+1}) \geq \mu^{\alpha}\omega(\delta \mu^{k}).$$ 
\end{proof}
\begin{remark}[{\bf Restriction, Renormalization and monotonicity of modulus of continuity}]\label{restriction-renormalization-MC} 
 \item \underline{Restriction:} Suppose $\omega\in\mathcal{DMC}(Q,\beta)$ and $\tau\in(0,\delta_{\omega})$ then the restriction of $\omega$ to $ [0,\tau]$, i.e, $\widehat{\omega}:[0,\tau]\to[0,\infty)$ given by $\widehat{\omega}(t) = \omega(t)$ for $t\in[0,\tau]$ is such that $\hat{\omega}\in\mathcal{DMC}(Q,\beta).$ In this case, $\delta_{\widehat{\omega}}=\tau$ and $\delta_{\widehat{\omega}}=\delta_{\omega^{*}}.$ Furthermore, if a bounded function $u$ defined on $B_{1}$ is such that $u\in C^{1,\omega}(0)$ and $L$ is its Taylor's polynomial at zero, we have that for $x\in B_{1}$ with $|x|\leq \tau$ 
$$ |u(x) - L(x)|\leq [u]_{C^{1,\omega}(0)} |x|\omega(|x|).$$
In particular $ [u]_{C^{1,\widehat{\omega}}(0)}\leq  [u]_{C^{1,\omega}(0)}.$\\

\item \underline{ Renormalization:} Suppose $\omega\in\mathcal{DMC}(Q,\beta)$ and set $\overline{\omega}(t):=w(Kt)$ where $t\in[0,\delta_{\omega}/K]$ ND $K\geq1.$ It is easy to see that $\overline{\omega}\in \mathcal{DMC}(Q,\beta)$ with $\delta_{\overline{\omega}} =\delta_{\omega}/K$ and $\delta_{\overline{\omega}}^{*}:=\delta_{\omega}^{*}$. Furthermore, $\int_{0}^{\delta_{\omega}}{w(s)}{s}^{-1}ds=\int_{0}^{1}{\overline{w}(s)}{s}^{-1}ds.$
As before, let $u$ be a bounded function defined on $B_{1}$ such that $u\in C^{1,\omega}(0)$ and $L$ its Taylor's polynomial at zero. Setting $\overline{u}(x)=K^{-1}u(Kx)$ and $\overline{L}(x):=K^{-1}L(Kx)$ for $x\in B_{1}$ and $|x|\leq \delta_{\omega}/K$ we have 

\begin{eqnarray*}
|\overline{u}(x) - \overline{L}(x)| & = & \big|K^{-1}\big(u(Kx) - L(Kx)\big)\big|\\\\
& \leq & K^{-1}[u]_{C^{1, \omega}}(0)|Kx|\omega(K|x|)\\\\
& \leq & [u]_{C^{1, \omega}}(0)|x|\overline{\omega}(|x|).
\end{eqnarray*}
This implies that $\overline{L}$ is the Taylor's polynomial of $\overline{u}$ at the origin and  $ [\overline{u}]_{C^{1,\overline{\omega}}(0)}\leq  [u]_{C^{1,\omega}(0)}.$\\

\item \underline{Monotonicity:} Suppose $\omega_{1}(t)\leq A\cdot\overline{\omega_{2}}(t)=\omega_{2}(t)$ for all $t\in [0,\delta]$. Then,
$$ A^{-1}\cdot[u]_{C^{0,\overline{\omega}_{2}}(\Omega)}= [u]_{C^{0,\omega_{2}}(\Omega)}\leq [u]_{C^{0,\omega_{1}}(\Omega)} \Longrightarrow C^{0,\overline{\omega}_{2}}(\Omega) =C^{0,\omega_{2}}(\Omega) \subset C^{0,\omega_{1}}(\Omega). $$ \end{remark}
\noindent In particular, when $\omega_{1}\sim\omega_{2}$ in $[0,\delta]$ then the respective induced norms are equivalent.

\section{Proof of Proposition \ref{Pucci-Barriers} and IHOL - Theorem \ref{IHOL}}
\indent We start by stating and proving a series of Lemmas of independent interest. The proof of Proposition \ref{Pucci-Barriers} is a direct consequence of them. In the sequel, we prove Theorem \ref{IHOL} which has  Proposition \ref{Pucci-Barriers} as the main ingredient. We will use throughout this section the scaling $v(x)=u(rx)/r$ as pointed out in Remark (\ref{scaling-remark}) to reduced the proofs to the case where $r=1$.The first Lemma is indeed the homogeneous version of Proposition \ref{Pucci-Barriers}.

\begin{lemma}\label{classical} Let us consider the following Dirichlet Problem for $0\leq \gamma \leq \gamma_{0}$ and $0<r\leq R_{0}$
 \begin{equation} \label{DP-positive-pucci-barrier-homogeneous}
 \left \{
    \begin{array}{rcll}
\mathcal{P}_{\gamma}^{-}[u]&=& 0& \textrm{ in } \:\mathcal{A}_{\frac{r}{2}, r} \\\\
     u &=& 0& \textrm{ on } \partial B_{r}\\\\
    u &=& M & \textrm { on } \partial B_{\frac{r}{2}}.
    \end{array}
    \right.
\end{equation}
There exists a unique classical solution $\Gamma_{0}\in C^{\infty}(\overline{\mathcal{A}_{r}})$ to the problem (\ref{DP-positive-pucci-barrier-homogeneous}). Furthermore, $\forall x\in\overline{\mathcal{A}_{\frac{r}{2}, r}}$, we have

\begin{equation}\label{estimate-pucci-geometry-barrier-homogeneous}
\overline{A_{1}}\cdot \frac{M}{r} \cdot dist(x,\partial B_{r}) \leq u(x) \leq \overline{A_{3}}\cdot \frac{M}{r} \cdot dist(x,\partial B_{r}) 
\end{equation}
\noindent Here,  $\overline{A_{1}}, \overline{A_{3}}$ are positive universal constants depending only on $n, \lambda, \Lambda, \gamma_{R_{0}}.$
\end{lemma}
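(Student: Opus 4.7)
The plan is to construct $\Gamma_0$ explicitly via a radial ansatz, which reduces the fully nonlinear PDE to a linear second-order ODE. First, I use the scaling $v(x) = u(rx)/r$ (see Remark \ref{scaling-remark}) to transform the problem on $\mathcal{A}_{r/2,r}$ into the same problem on $\mathcal{A}_{1/2,1}$ with boundary value $M/r$ and drift constant $r\gamma \leq \gamma_{R_0}$. Therefore it suffices to solve the case $r=1$ and recover the general estimate by undoing the scaling at the end.

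Next I seek $\Gamma_0(x) = g(|x|)$ with $g:[1/2,1]\to[0,M]$ satisfying $g(1/2)=M$ and $g(1)=0$. Setting $t=|x|$, the Hessian of $g(|x|)$ has radial eigenvalue $g''(t)$ and the tangential eigenvalue $g'(t)/t$ with multiplicity $n-1$. Assuming tentatively $g'<0$ and $g''>0$, the definition of $\mathcal{M}_{\lambda,\Lambda}^-$ in (\ref{Pucci-attained}) together with $|\nabla \Gamma_0|=-g'$ reduces $\mathcal{P}_\gamma^-[\Gamma_0]=0$ to
\[ \lambda\, g''(t) + \Big(\tfrac{\Lambda(n-1)}{t}+\gamma\Big)\, g'(t) = 0,\qquad t\in(1/2,1). \]
This is a first-order linear ODE for $g'$, whose solution matching the two boundary conditions is
\[ g(t) = M\cdot \frac{\displaystyle\int_t^1 \Phi(s)\,ds}{\displaystyle\int_{1/2}^1 \Phi(s)\,ds},\qquad \Phi(s) := s^{-\Lambda(n-1)/\lambda}\, e^{-\gamma s/\lambda}. \]
The resulting $\Gamma_0(x):=g(|x|)$ is smooth up to the boundary of $\mathcal{A}_{1/2,1}$, and a direct computation gives $g'=-c\,\Phi<0$ and $g'' = -\bigl(\tfrac{\Lambda(n-1)}{\lambda t}+\tfrac{\gamma}{\lambda}\bigr) g'>0$, confirming the self-consistency of the ansatz. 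Uniqueness among (viscosity and hence classical) solutions follows from the standard comparison principle for the uniformly elliptic, concave operator $\mathcal{P}_\gamma^-$.

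Finally, to obtain the sharp two-sided estimate, I exploit that for $s\in[1/2,1]$ the integrand satisfies
\[ e^{-\gamma/\lambda} \leq \Phi(s) \leq 2^{\Lambda(n-1)/\lambda}. \]
Consequently $\int_t^1 \Phi(s)\,ds$ is comparable to $(1-t)$ and $\int_{1/2}^1 \Phi(s)\,ds$ is comparable to $1/2$, with ratios depending only on $n,\lambda,\Lambda$ and the (scaled) drift $r\gamma\leq \gamma_{R_0}$. This yields $\overline{A}_1\, M(1-t) \leq g(t) \leq \overline{A}_3\, M(1-t)$ on $[1/2,1]$, and undoing the scaling produces the stated bound in terms of $\mathrm{dist}(x,\partial B_r) = r-|x|$, with constants depending only on $n,\lambda,\Lambda,\gamma_{R_0}$. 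The only real care needed is to track the scaling so that the effective drift $r\gamma$ remains bounded by $\gamma_{R_0}$; there is no deeper obstacle, since the radial reduction linearises the operator and the remaining work is ODE bookkeeping.
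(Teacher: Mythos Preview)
Your proof is correct and follows essentially the same route as the paper: both reduce by scaling to the unit annulus, posit a radial solution, reduce $\mathcal{P}_\gamma^-[\Gamma_0]=0$ to the linear ODE $\lambda g''+\bigl(\tfrac{\Lambda(n-1)}{t}+\gamma\bigr)g'=0$, and write down the identical explicit solution $g(t)=M\int_t^1 \Phi/\int_{1/2}^1\Phi$ with $\Phi(s)=s^{-\Lambda(n-1)/\lambda}e^{-\gamma s/\lambda}$. The paper simply asserts that $\phi$ is decreasing and convex and leaves the distance estimate to the reader (deferring also to \cite{BM-IHOL-PartI}), whereas you spell out the two-sided bound on $\Phi$ and the scaling bookkeeping for the drift; this is added detail, not a different idea.
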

\begin{proof} This is actually a particular case of Proposition 4.1 in \cite{BM-IHOL-PartI}. We indicate the details. By scaling, it is enough to discuss the case where $r=1$. The uniqueness follows from the validity of the comparison principle for $C-$viscosity solutions (see \cite{CCKS}). In fact, for the problem (\ref{DP-positive-pucci-barrier-homogeneous}), all the concepts ($C$-viscosity, $L^{n}$-viscosity, strong and classical solutions) coincide.  Regarding the existence, once solution must be radial (equations is invariant under rotations), the PDE in (\ref{DP-positive-pucci-barrier-homogeneous}) can be reduced to an ODE. Thus, we define
$$ \Gamma_{0}(x) := M\cdot\phi(|x|) \quad \textnormal { for } \quad  \frac{1}{2}\leq |x| \leq 1, $$
where $\phi:[1/2, 1]\to\mathbb{R}$ is given by
$$  \phi(r) = \Bigg(\int_{1/2}^{1} t^{-\mathcal{E}_{0}}e^{-(\gamma/\lambda)t}dt\Bigg)^{-1}\cdot \int_{r}^{1} t^{-\mathcal{E}_{0}}e^{-(\gamma/\lambda) t}dt, \quad \mathcal{E}_{0}:= \frac{\Lambda\cdot(n-1)}{\lambda}\geq 1.$$
One can check that $\phi$ is decreasing and convex. From this, it is easy to check that $\Gamma_{0}$ is a (classical) solution to (\ref{DP-positive-pucci-barrier-homogeneous}) and also satisfies the indicated properties.
\end{proof}
\begin{lemma} \label{difference-s-star-class} 
Let $U\subset \mathbb{R}^{n}$ be an open set. Suppose that $  \mathcal{P}_{\gamma}^{-}[u]= f$ in $U$ in the $L^{n}$-viscosity sense and that $v\in W_{loc}^{2,n}(U)$ is a strong solution to $ \mathcal{P}_{\gamma}^{-}[v]= g $ in $U$. Here, $f,g\in L_{loc}^{n}(U)$.  Then, $u-v\in S(\gamma, f-g)$ in $U$. 
\end{lemma}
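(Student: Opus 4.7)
The plan is to verify separately the two viscosity inequalities defining $S(\gamma,f-g)=\underline{S}(\gamma,f-g)\cap\overline{S}(\gamma,f-g)$, i.e.\ $\mathcal{P}^{+}_{\gamma}[u-v]\geq f-g$ and $\mathcal{P}^{-}_{\gamma}[u-v]\leq f-g$ in $U$ in the $L^{n}$-viscosity sense. The basic trick is that any test function $\phi\in W^{2,n}_{\mathrm{loc}}(U)$ for $u-v$ can be upgraded to a test function $v+\phi\in W^{2,n}_{\mathrm{loc}}(U)$ for $u$, after which the known equation $\mathcal{P}^{-}_{\gamma}[u]=f$ combines with the pointwise a.e.\ PDE satisfied by the strong solution $v$ to produce the desired inequality for $\phi$ alone.

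First I would handle the subsolution direction. Let $\phi\in W^{2,n}_{\mathrm{loc}}(U)$ be such that $(u-v)-\phi$ attains a local maximum at $x_{0}\in U$; then $u-(v+\phi)$ also attains a local maximum at $x_{0}$, and the ``sub'' half of $\mathcal{P}^{-}_{\gamma}[u]=f$ yields
\[
ess\limsup\limits_{x\to x_{0}}\bigl(\mathcal{M}^{-}(D^{2}(v+\phi)(x))-\gamma(x)|\nabla(v+\phi)(x)|-f(x)\bigr)\geq 0.
\]
Thus for every $\varepsilon,r>0$ the set $E_{\varepsilon,r}\subset B_{r}(x_{0})$ on which this quantity exceeds $-\varepsilon$ has positive Lebesgue measure. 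Intersecting $E_{\varepsilon,r}$ with the full-measure set on which $v$ satisfies $\mathcal{M}^{-}(D^{2}v)-\gamma|\nabla v|=g$ pointwise still leaves positive measure. At each such $x$ I would invoke two elementary facts: the subadditivity inequality $\mathcal{M}^{-}(A+B)\leq \mathcal{M}^{-}(A)+\mathcal{M}^{+}(B)$ (immediate from the representation in \eqref{Pucci-attained}) and the triangle inequality $|\nabla(v+\phi)|\leq|\nabla v|+|\nabla\phi|$. Subtracting the pointwise identity for $v$ from the viscosity inequality and rearranging gives
\[
\mathcal{M}^{+}(D^{2}\phi(x))+\gamma(x)|\nabla\phi(x)|\geq \bigl[\mathcal{M}^{-}(D^{2}(v+\phi))-\gamma|\nabla(v+\phi)|\bigr]-\bigl[\mathcal{M}^{-}(D^{2}v)-\gamma|\nabla v|\bigr]\geq (f-g)(x)-\varepsilon,
\]
from which $ess\limsup_{x\to x_{0}}\bigl(\mathcal{P}^{+}_{\gamma}[\phi]-(f-g)\bigr)\geq 0$ follows as $\varepsilon\downarrow 0$.

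The supersolution direction $\mathcal{P}^{-}_{\gamma}[u-v]\leq f-g$ is entirely symmetric. I would take $\phi$ such that $(u-v)-\phi$ has a local \emph{minimum} at $x_{0}$, apply $\mathcal{P}^{-}_{\gamma}[u]\leq f$ to obtain an essential $\liminf$ bounded above by $\varepsilon$, and use the dual superadditivity $\mathcal{M}^{-}(A+B)\geq \mathcal{M}^{-}(A)+\mathcal{M}^{-}(B)$ together with the same triangle inequality, in place of the subadditivity used above, to conclude $\mathcal{P}^{-}_{\gamma}[\phi]-(f-g)\leq \varepsilon$ on a positive-measure subset of every neighborhood of $x_{0}$.

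The only subtle point, and the one worth flagging as a potential pitfall, is the measure-theoretic step: the $L^{n}$-viscosity limsup/liminf delivers positive-measure subsets in arbitrarily small neighborhoods of $x_{0}$, while the strong-solution hypothesis on $v$ delivers a set of full measure, and the intersection of these must still have positive measure in order to evaluate both conditions pointwise at the same point. Beyond this bookkeeping the argument is just the algebra of the Pucci extremal operators, so I do not anticipate any substantive obstacle.
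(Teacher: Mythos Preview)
Your proof is correct and follows essentially the same approach as the paper: upgrade a test function $\phi$ for $u-v$ to the test function $\phi^{*}=v+\phi\in W^{2,n}_{\mathrm{loc}}$ for $u$, use the viscosity equation for $u$ on $\phi^{*}$, and combine with the a.e.\ pointwise equation for $v$ via the sub/superadditivity of the Pucci operators and the triangle inequality for the gradient term. The only cosmetic difference is that the paper packages the algebra as a single pointwise a.e.\ inequality $\mathcal{P}^{+}_{\gamma}[\phi](x)-(f-g)(x)\geq \mathcal{P}^{-}_{\gamma}[\phi^{*}](x)-f(x)$ and then takes $ess\limsup$ directly, whereas you unpack the $ess\limsup$ into positive-measure sets $E_{\varepsilon,r}$ and intersect with the full-measure set on which the strong equation for $v$ holds; these are equivalent bookkeepings of the same argument.
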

\begin{proof} Indeed, let $\phi\in W_{loc}^{2,n}(U)$ be such that $(u-v)-\phi = u-(v+\phi)=u-\phi^{*}$ has a local maximum at $x_{0}\in U$ where $\phi^{*}=v+\phi$. This way, for $a.e.~x$ in $U$ we have
\begin{eqnarray*}
\mathcal{P}_{\gamma}^{+}[\phi](x)&=&\mathcal{P}_{\gamma}^{+}[\phi^{*}-v](x)\\
&\geq &\mathcal{P}_{\gamma}^{-}[\phi^{*}](x) + \mathcal{P}_{\gamma}^{+}[-v](x)\\
& = & \mathcal{P}_{\gamma}^{-}[\phi^{*}](x) -\mathcal{P}_{\gamma}^{-}[v](x)\\
& =& \mathcal{P}_{\gamma}^{-}[\phi^{*}](x) -g(x).
\end{eqnarray*}
\noindent In particular, by the estimate above
\begin{eqnarray*}
ess\limsup\limits_{x\to x_{0}} \Big(\mathcal{P}_{\gamma}^{+}[\phi] - \Big(f(x) - g(x)\Big) \Big) &\geq & ess\limsup\limits_{x\to x_{0}} \Big(\mathcal{P}_{\gamma}^{-}[\phi^{*}](x) -g(x) - \Big(f(x) - g(x)\Big) \Big)\\
& = & ess\limsup\limits_{x\to x_{0}} \Big(\mathcal{P}_{\gamma}^{-}[\phi^{*}](x) -f(x)\Big)\\
& \geq & 0.
\end{eqnarray*} 
So, $\mathcal{P}_{\gamma}^{+}[u]\geq f-g$ in $U$ in the $L^{n}$-viscosity sense. Conversely, suppose $\phi\in W_{loc}^{2,n}(U)$ and $u-v-\phi=u-(v+\phi)=u-\phi^{*}$ has a local minimum at $x_{0}\in U$, where as before $\phi^{*}:=v+\phi.$  This way, for $a.e.~x$ in $U$ we have
\begin{eqnarray*}
\mathcal{P}_{\gamma}^{-}[\phi](x)&=&\mathcal{P}_{\gamma}^{-}[\phi^{*}-v](x)\\
&\leq &\mathcal{P}_{\gamma}^{-}[\phi^{*}](x) + \mathcal{P}_{\gamma}^{+}[-v](x)\\
& = & \mathcal{P}_{\gamma}^{-}[\phi^{*}](x) -\mathcal{P}_{\gamma}^{-}[v](x)\\
& =& \mathcal{P}_{\gamma}^{-}[\phi^{*}](x) -g(x).
\end{eqnarray*}
\noindent In particular, as before, by the estimate above
\begin{eqnarray*}
ess\liminf\limits_{x\to x_{0}} \Big(\mathcal{P}_{\gamma}^{-}[\phi] - \Big(f(x) - g(x)\Big) \Big) &\leq & ess\liminf\limits_{x\to x_{0}} \Big(\mathcal{P}_{\gamma}^{-}[\phi^{*}](x) -g(x) - \Big(f(x) - g(x)\Big) \Big)\\
& = & ess\liminf\limits_{x\to x_{0}} \Big(\mathcal{P}_{\gamma}^{-}[\phi^{*}](x) -f(x)\Big)\\
& \leq & 0.
\end{eqnarray*} 
So, $\mathcal{P}_{\gamma}^{-}[u]\leq f-g$ in $U$ in the $L^{n}$-viscosity sense. This finishes the proof of the Lemma.
\end{proof}
Now, we present the last Lemma we need to prove the results of this section. 
\begin{lemma}\label{lemma-control-by-the-distance} Let $u\in C^{0}(\overline{\mathcal{A}_{\frac{r}{2},r}})\cap S^{*}(\gamma; f)$ in $\mathcal{A}_{\frac{r}{2},r}$ where $f\in L^{q}(\mathcal{A}_{\frac{r}{2},r})$ for $q>n$ and $0\leq \gamma \leq \gamma_{0}$ and $0<r\leq R_{0}$. Additionally, assume that $u=0$ on $\partial \mathcal{A}_{\frac{r}{2}, r}.$ Then, there exists a positive universal constant $C=C(n, q, \lambda, \Lambda, \gamma_{R_{0}})>0$ such that
\begin{equation}\label{control-by-the-distance}
|u(x)| \leq Cr^{1-n/q}||f||_{L^{q}(\mathcal{A}_{r/2,r})}dist(x,\partial B_{r}) \quad \textnormal{ for every } \ x\in\overline{\mathcal{A}_{\frac{r}{2},r}}.
\end{equation}
\end{lemma}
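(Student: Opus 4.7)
My plan is to scale to the unit annulus, dominate $|u|$ pointwise by a nonnegative strong solution $H$ of an auxiliary Pucci Dirichlet problem with zero boundary data, and then show that $H$ decays linearly at the boundary.

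First, I apply the scaling $v(x):=u(rx)/r$ of Remark \ref{scaling-remark}, which sends $u\in S^{*}(\gamma,f)$ on $\mathcal{A}_{r/2,r}$ to $v\in S^{*}(r\gamma,\widetilde f)$ on $\mathcal{A}_{1/2,1}$ with $\widetilde f(x)=rf(rx)$ and $\|\widetilde f\|_{L^{q}(\mathcal{A}_{1/2,1})}=r^{1-n/q}\|f\|_{L^{q}(\mathcal{A}_{r/2,r})}$. Since $r\gamma\le\gamma_{R_{0}}$, constants depending on the new drift remain universal in the sense required by the statement. It therefore suffices to prove $|v(x)|\le C\|\widetilde f\|_{L^{q}(\mathcal{A}_{1/2,1})}\cdot d_{1}(x)$ on $\overline{\mathcal{A}_{1/2,1}}$, where $d_{1}(x):=\mathrm{dist}(x,\partial B_{1})$.

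Next, let $H\in W^{2,q}(\mathcal{A}_{1/2,1})\cap C^{0}(\overline{\mathcal{A}_{1/2,1}})$ be the unique strong solution of $\mathcal{P}_{r\gamma}^{+}[H]=-|\widetilde f|$ in $\mathcal{A}_{1/2,1}$ with $H\equiv 0$ on $\partial\mathcal{A}_{1/2,1}$; existence and uniqueness follow from the $W^{2,q}$-solvability theory for Pucci equations on smooth domains (cf.\ \cite{CCKS, Koike-Swiech-WHUnbounded}). Since $\mathcal{P}^{+}[H]\le 0$, the maximum principle forces $H\ge 0$; and since $\mathcal{P}^{-}[H]\le\mathcal{P}^{+}[H]=-|\widetilde f|$ a.e., one has $H\in\overline{S}(r\gamma,-|\widetilde f|)$. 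Combined with $v\in\underline{S}(r\gamma,-|\widetilde f|)$ and matching zero boundary data, the Pucci-class comparison principle yields $v\le H$; applying the identical reasoning to $-v$ (which also belongs to $S^{*}(r\gamma,\widetilde f)$ since $\mathcal{P}^{\pm}[-v]=-\mathcal{P}^{\mp}[v]$) gives $-v\le H$, whence $|v|\le H$ on $\overline{\mathcal{A}_{1/2,1}}$.

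Finally, I control $H$ pointwise. The ABP estimate for $L^{n}$-viscosity subsolutions with bounded drift $r\gamma\le\gamma_{R_{0}}$, followed by H\"older's inequality on the annulus, yields $\|H\|_{L^{\infty}(\mathcal{A}_{1/2,1})}\le C_{1}\|\widetilde f\|_{L^{n}}\le C_{2}\|\widetilde f\|_{L^{q}}$. The global $W^{2,q}$ regularity of $H$ up to the smooth boundary of $\mathcal{A}_{1/2,1}$, combined with the Sobolev embedding $W^{2,q}\hookrightarrow C^{1,1-n/q}$, upgrades this to $\|\nabla H\|_{L^{\infty}(\mathcal{A}_{1/2,1})}\le C_{3}(\|H\|_{L^{\infty}}+\|\widetilde f\|_{L^{q}})\le C_{4}\|\widetilde f\|_{L^{q}}$. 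Since $H$ vanishes on $\partial B_{1}$ and is $C^{1}$ up to the closure, the mean value theorem gives $H(x)\le\|\nabla H\|_{L^{\infty}}\cdot d_{1}(x)$ in a neighborhood of $\partial B_{1}$; for $x\in\mathcal{A}_{1/2,1}$ with $d_{1}(x)$ bounded below, the $L^{\infty}$ bound on $H$ trivially yields the same estimate. Chaining gives $|v(x)|\le H(x)\le C_{5}\|\widetilde f\|_{L^{q}}d_{1}(x)$; unscaling produces the claimed inequality. The principal technical hurdle is the global $W^{2,q}$ estimate for $H$ up to the smooth boundary of the annulus, which is classical for uniformly elliptic Pucci operators with $L^{q}$ right-hand side; an alternative route avoids this by a direct sub/supersolution construction near each component of $\partial\mathcal{A}_{1/2,1}$ via rescaled copies of the homogeneous barrier $\Gamma_{0}$ from Lemma \ref{classical} together with a vanishing correction, but this requires a more delicate matching of the right-hand sides.
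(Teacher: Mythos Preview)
Your proof is correct and follows essentially the same strategy as the paper: scale to the unit annulus, trap $u$ by strong solutions of auxiliary Pucci Dirichlet problems with zero boundary data via the comparison principle in the presence of strong solutions (Theorem~2.10 in \cite{CCKS}), and then convert the global $C^{1}$ regularity of those barriers (there stated via the $C^{1,\alpha}$ up-to-the-boundary estimate of \cite{NW}, equivalent to your $W^{2,q}\hookrightarrow C^{1,1-n/q}$ route) into linear decay at $\partial B_{1}$ by comparing $x$ to its radial projection $x/|x|$. The only cosmetic difference is that the paper uses two barriers $\Gamma_{a},\Gamma_{b}$ solving $\mathcal{P}_{\gamma}^{+}[\Gamma_{a}]=-|f|$ and $\mathcal{P}_{\gamma}^{-}[\Gamma_{b}]=|f|$ to sandwich $\Gamma_{b}\le u\le \Gamma_{a}$, whereas you use a single nonnegative barrier $H$ (your $H$ is precisely the paper's $\Gamma_{a}$) and exploit the symmetry $\pm v\in S^{*}$ to get $|v|\le H$; this is a harmless streamlining of the same argument.
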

\begin{proof} By scaling, it is enough to study the case where $r=1$. Let us consider $\Gamma_{a}$ and $\Gamma_{b}$ the $L^{n}$-viscosity solutions to the following Dirichlet problems
\begin{align} \label{Dirichlet pucci problems}
\left \{
    \begin{array}{rcll}
\mathcal{P}_{\gamma}^{-}[\Gamma_{b}]&=& |f| & \textrm{ in } \:\mathcal{A}_{1/2, 1} \\\\
     u &=& 0 & \textrm{ on } \partial \mathcal{A}_{1/2,1},
    \end{array}
    \right.
 && \left \{
    \begin{array}{rcll}
\mathcal{P}_{\gamma}^{+}[\Gamma_{a}]&=& -|f| & \textrm{ in } \:\mathcal{A}_{1/2, 1} \\\\
     u &=& 0 & \textrm{ on } \partial \mathcal{A}_{1/2,1}.
    \end{array}
    \right.
\end{align}
The existence of such solutions can be directly quoted from Theorem 4.1 in \cite{CKLS}. A function is a $L^{n}-$viscosity solution to the Dirichlet Problem above if and only if it is a $L^{n}-$strong solution of the same problem. Indeed, $L^{n}-$strong solutions are $L^{n}-$viscosity solutions by Theorem 2.1 in \cite {CKSS}. The converse follows from the fact that $W_{loc}^{2,n}$ interior regularity is available for $L^{n}-$viscosity solutions to the Dirichlet problems (\ref{Dirichlet pucci problems}) by Theorem 4.2 in \cite{NW}. The result then follows from Corollary 3.7 in \cite{CCKS}. Now, we can apply the comparison principle in the presence of strong solutions (Theorem 2.10 in \cite{CCKS}) to obtain

\begin{equation}\label{comparison-pucci-barriers-constructed}
\Gamma_{b}(x) \leq u(x) \leq \Gamma_{a}(x) \quad \forall x\in \overline{\mathcal{A}_{1/2,1}}.
\end{equation}
Since $q>n$, we have $C^{1,\alpha}$ estimates up to the boundary for problems (\ref{Dirichlet pucci problems}) (see Theorem 4.5 in \cite{NW}). More precisely, 
\begin{equation}\label{control-c1-alpha}
\max\Big\{||\Gamma_{a}||_{C^{1,\alpha}(\mathcal{A}_{1/2,1})}, ||\Gamma_{b}||_{C^{1,\alpha}(\mathcal{A}_{1/2,1})} \Big\} \leq C\cdot ||f||_{L^{q}(\mathcal{A}_{1/2,1})}.
\end{equation} 
In particular, for any $x\in\mathcal{A}_{1/2,1}\setminus \partial B_{1},$ we have by (\ref{control-c1-alpha}) that
\begin{eqnarray}
|\Gamma_{a}(x)| &=& |\Gamma_{a}(x)-\Gamma_{a}(x/|x|)| \\ \nonumber\label{Lipschitz-gamma-a}
&\leq& ||\nabla \Gamma_{a}||_{L^{\infty}(\mathcal{A}_{1/2,1})}\cdot||x-x/|x|||  \\ \nonumber
&\leq &C||f||_{L^{q}(\mathcal{A}_{1/2,1})}dist(x,\partial B_{1})
\end{eqnarray}
The inequality above trivially holds in $\partial B_{1}.$ Similarly, we prove that for all $x\in\mathcal{A}_{1/2,1}$ we have 

\begin{equation}\label{Lipschitz-gamma-b}
|\Gamma_{b}(x)|\leq C||f||_{L^{q}(\mathcal{A}_{1/2,1})}dist(x,\partial B_{1}).
\end{equation}
Combining the inequalities (\ref{comparison-pucci-barriers-constructed}), (\ref{Lipschitz-gamma-a}) and (\ref{Lipschitz-gamma-b}), the Lemma is proven.
\end{proof}
\begin{center}
{\bf \underline{ Proof of Proposition \ref{Pucci-Barriers}}} 
\end{center}
\begin{proof} Existence, uniqueness and the fact that $L^{n}$-viscosity solutions are also $L^{n}$-strong solutions to the Dirchlet problem in (\ref{DP-positive-pucci-barrier-displaced}) follow exactly from the same arguments we used the same assertions in the proof of Lemma \ref{lemma-control-by-the-distance}. It remains to prove inequality in (\ref{estimate-pucci-geometry-barrier}). By scaling arguments, it is enough to show the inequality only when $r=1.$ So, let $v\in C^{\infty}(\overline{\mathcal{A}_{1/2,1}})$ be the unique classical solution (and thus also $L^{n}$-viscosity solution by the equivalence of these notions proved in Theorem 2.1 in \cite{CKSS}) of the problem 
\begin{equation} \label{DP-positive-pucci-barrier-homogeneous-extra}
 \left \{
    \begin{array}{rcll}
\mathcal{P}_{\gamma}^{-}[v]&=& 0& \textrm{ in } \:\mathcal{A}_{1/2,1} \\\\
     v &=& 0& \textrm{ on } \partial B_{r}\\\\
    v &=& M & \textrm { on } \partial B_{\frac{r}{2}}.
    \end{array}
    \right.
\end{equation}
given by Lemma \ref{classical}. From the same Lemma, we know that
\begin{equation}\label{estimate-homogeneous-IHOL-proof}
\overline{A_{1}}\cdot {M} \cdot dist(x,\partial B_{1}) \leq v(x) \leq \overline{A_{3}}\cdot {M} \cdot dist(x,\partial B_{1}) \qquad \forall x\in \overline{\mathcal{A}_{1/2,1}},
\end{equation}
where $\overline{A_{1}}, \overline{A_{2}}$ depends only on $n,\lambda, \Lambda, \gamma.$ Now, we consider $w:=u-v \in C^{0}(\overline{\mathcal{A}_{1/2,1}}).$ Since $v$ is smooth solution to (\ref{DP-positive-pucci-barrier-homogeneous-extra}) we obtain from Lemma \ref{difference-s-star-class} that $w\in S(\gamma; f)$ in $\mathcal{A}_{1/2,1}.$ Clearly, $w=0$ along $\partial\mathcal{A}_{1/2,1}.$ This way, we conclude from Lemma \ref{lemma-control-by-the-distance} that
$$ |w(x)|\leq C||f||_{L^{q}(\mathcal{A}_{1/2,1})}dist(x,\partial B_{1}) \qquad \forall x\in\overline{\mathcal{A}_{\frac{1}{2},1}}.$$
where $C=C(n,q,\lambda, \Lambda)>0.$ This implies, 
$$ v(x) -C||f||_{L^{q}(\mathcal{A}_{1/2,1})}dist(x,\partial B_{1}) \leq u(x) \leq v + C||f||_{L^{q}(\mathcal{A}_{1/2,1})}dist(x,\partial B_{1}) \qquad \forall x\in\overline{\mathcal{A}_{\frac{1}{2},1}}.$$
Taking into account the estimate (\ref{estimate-homogeneous-IHOL-proof}), we finally conclude that $\forall x\in \overline{\mathcal{A}_{1/2,1}}$ we have
$$\Big(\overline{A_{1}}M -C||f||_{L^{q}(\mathcal{A}_{1/2,1})}\Big)dist(x,\partial B_{1})\leq u(x) \leq  \Big(\overline{A_{3}}M +C||f||_{L^{q}(\mathcal{A}_{1/2,1})}\Big)dist(x,\partial B_{1}).$$
This finishes the proof of the Proposition.
\end{proof}
\begin{center}
{\bf \underline{ Proof IHOL - Theorem \ref{IHOL}}} 
\end{center}
\begin{proof} Once more, by scaling, it is enough to prove only the case $r=1$. In order to prove the Theorem, we observe that suffices to prove the result for ``small RHS". More precisely, it suffices to prove that there exist positive universal constants $H_{1}, H_{2}$ depending on $n,q, \lambda,\Lambda, \gamma$ such that 
\begin{equation}\label{suffices}
||f||_{L^{q}(B_{1})} \leq H_{2}u(0) \Longrightarrow u(x) \geq H_{1}u(0)dist(x,\partial B_{1}) \quad \forall x\in \overline{B}_{1}.
\end{equation}
Indeed, we observe that if (\ref{suffices}) holds then 
\begin{equation}\label{desired}
u(x) \geq \Big(H_{1}u(0) - \frac{2H_{1}}{H_{2}}||f||_{L^{q}(B_{1})}\Big)dist(x,\partial B_{1}) \quad \forall x\in \overline{B_{1}}.
\end{equation}
This is easy to see. If $||f||_{L^{q}(B_{1})} \leq H_{2}u(0)$ then (\ref{suffices}) implies (\ref{desired}). Otherwise,  (\ref{desired}) holds trivially since 
$$u(x) \geq 0 \geq   \Big(H_{1}u(0) - \frac{2H_{1}}{H_{2}}||f||_{L^{q}(B_{1})}\Big)dist(x,\partial B_{1}) \quad \forall x\in\overline{B_{1}}.$$
So, we will just prove (\ref{suffices}). By Harnack inequality (Corollary 5.12 in \cite{Fok}), there exists a universal constant $C=C(n,q, \lambda,\Lambda,\gamma )>0$ such that
\begin{equation}\label{harnack-in-proof-Hopf}
u(0)\leq \sup\limits_{B_{1/2}}\leq C\Big(\inf\limits_{B_{1/2}}u + ||f||_{L^{q}(B_{1})}\Big).
\end{equation}
Thus, 
\begin{equation}\label{solution-pop-up}
||f||_{L^{q}(B_{1})}\leq \frac{1}{2C}u(0) \Longrightarrow \inf\limits_{B_{1/2}} u \geq \frac{1}{2C}u(0).
\end{equation}
Now, we consider the following barrier 
 \begin{equation} \label{barrier-Hopf}
 \left \{
    \begin{array}{rcll}
\mathcal{P}_{\gamma}^{-}[v]&=& |f|& \textrm{ in } \:\mathcal{A}_{\frac{1}{2}, 1} \\\\
     v&=& 0& \textrm{ on } \partial B_{1}\\\\
    v &=& M:=\frac{u(0)}{2C} & \textrm { on } \partial B_{\frac{1}{2}}.
    \end{array}
    \right.
\end{equation}
Now, Proposition \ref{Pucci-Barriers} gives $\forall x\in\overline{\mathcal{A}_{1/2,1}}$
\begin{equation}\label{geometry-barrier-in-hopf-proof} 
v(x) \geq \Bigg(\frac{A_{1}}{2C}u(0) - A_{2}||f||_{L^{q}(\mathcal{A}_{1/2,1})}\Bigg)\cdot dist(x,\partial B_{1}).
\end{equation}
Now, suppose that 
\begin{equation}\label{crucial-smallness}
\|f\|_{L^{q}(\mathcal{A}_{1/2,1})} \leq \min\Bigg\{\frac{1}{2C}, \frac{A_{1}}{4A_{2}C}\Bigg\}u(0).
\end{equation}
This way, estimate (\ref{solution-pop-up}) combined  with the comparison principle (applied for the operator $\mathcal{P}_{\gamma}^{-}$) and (\ref{geometry-barrier-in-hopf-proof}) imply that
\begin{equation}\label{almost-final}
u(x) \geq v(x) \geq \frac{A_{1}}{4C}u(0)dist(x,\partial B_{1})\quad \forall x\in \overline{\mathcal{A}_{1/2,1}}.\end{equation}
Also, under the assumption in (\ref{crucial-smallness}), we also conclude by (\ref{solution-pop-up}) that
\begin{equation}\label{getting-the-rest-of-ball} 
u(x) \geq \frac{1}{2C}u(0)\geq \frac{1}{2C}u(0)dist(x,\partial B_{1}) \quad \forall x\in \overline{B_{1/2}}. 
\end{equation}
Thus, (\ref{crucial-smallness}), (\ref{almost-final}) and (\ref{getting-the-rest-of-ball}) together imply that (\ref{suffices}) holds for 
$$H_{1}:= \min\Bigg\{\frac{1}{2C}, \frac{A_{1}}{4C}\Bigg\} \qquad \textnormal { and  } \quad H_{2}:=\min\Bigg\{\frac{1}{2C}, \frac{A_{1}}{4A_{2}C}\Bigg\}.$$
Estimate (\ref{BCN-Hopf}) follows by direct computations. This finishes the proof of IHOL.
\end{proof}
\section{Proof of Propositions \ref{bdry-lip-type-estimate-scaled-version} and \ref{Boundary Behaviour ffb} }

\begin{center}
{\bf \underline{ Proof of Propostion \ref{Boundary Behaviour ffb}}} 
\end{center}

\begin{proof}
By scaling it is enough to prove the Proposition when $r = 1$. Let $x_0 \in \overline{B}_{1/2}^+$. If $(x_0)_n \geq 1/16$, the Harnack inequality in $D_{0}:=\overline{B}_{3/4}^{+} \cap \lbrace x_n \geq  1/16 \rbrace \ni x_{0}$ and since $(x_{0})_{n} \leq |x_{0}| \leq 1,$ we have that there exists $c_{0} = c_{0}(n, q, \lambda, \Lambda, \gamma) \in (0, 1)$ such that
\begin{equation}\label{Inh-HI-x0}
u(x_0) \geq \left( c_{0}\cdot  u\left(\frac{1}{2} e_n \right) - ||f||_{L^{q}(B_{1}^{+})} \right) \cdot (x_0)_n.
\end{equation}

\noindent Now, if $(x_0)_n < 1/16$ we take $y_0$ to be the projection of $x_0$ on $\lbrace x_n = 1/16 \rbrace$, i.e, if  $x_{0}=(x_{0}',(x_{0})_{n})$ we set $y_{0}:=(x_{0}',1/16).$ Let us observe that $B_{1/16}(y_{0}) \subset B_{3/4}^{+}$ with $x_{0}\in \overline{B}_{1/16}(y_{0})$. Then, by estimate (\ref{hopf-mean-value-inequality-FN}) in Theorem \ref{IHOL} applied to the ball $B_{1/16}(y_{0})$,  we have that
\begin{eqnarray} \label{hopf principle estimate}
u(x_0) & \geq & \Bigg( \overline{C_{1}}\cdot u(y_0) -\overline{C_{2}} \cdot ||f||_{L^{q}(B_{1}^{+})} \Bigg) \cdot \textrm{dist}(x_0, \partial B_{\frac{1}{16}}(y_0)) \\
& =  & \Bigg( \overline{C_{1}} \cdot u(y_0) -\overline{C_{2}} \cdot ||f||_{L^{q}(B_{1}^{+})} \Bigg) \cdot (x_0)_n. \nonumber
\end{eqnarray}
Since $y_0\in D_{0},$ we can use Harnack inequality once more (like in (\ref{Inh-HI-x0}))  to obtain

\begin{eqnarray} \label{main estimate in the prop}
 u(x_0) & \geq & \Bigg( \overline{C_{1}} \cdot \Bigg(c_{0} \cdot u\left(\frac{1}{2} e_n \right) - ||f||_{L^{q}(B_{1}^{+})} \Bigg) -\overline{C_{2}} \cdot ||f||_{L^{q}(B_{1}^{+})} \Bigg) \cdot (x_0)_{n} \\
& = & \Bigg( c_{0} \cdot \overline{C_{1}} \cdot u \left(\frac{1}{2}e_{n}\right) -(\overline{C_{1}} + \overline{C_{2}}) \cdot||f||_{L^{q}(B_{1}^{+})} \Bigg) \cdot (x_0)_{n}. \nonumber
\end{eqnarray}
The proof is finished by taking $D_{2}:=\min\Big\{c_{0}, c_{0}\cdot \overline{C_{1}} \Big\}$ and $D_{3}:=\max \Big\{1,  \overline{C_{1}} + \overline{C_{2}}\Big\}$.
\end{proof}
\begin{center}
{\bf \underline{ Proof of Propostion \ref{bdry-lip-type-estimate-scaled-version}}} 
\end{center}

\begin{proof}
As before, by scaling, it is enough to prove the case $r=1$. We divide the proof in two case: first, suppose $x_0 \in \overline{B}_{1/2}^{+}\cap \lbrace x_n \geq 1/16 \rbrace.$ We have
$$\vert u(x_0) \vert \leq 16 \Big( {\Vert u \Vert}_{L^{\infty}(B_{1}^+(0))} + {\Vert f \Vert}_{L^{q}(B_{1}^+(0))} \Big) \cdot (x_0)_n + \sup_{B'_1} \vert u \vert,$$
proving desired estimate. If $x_0 \in \lbrace x_n < 1/16 \rbrace\cap B_{1/2}^{+}$ we take $y_0$ to be its projection onto the the hyperplane $\{ x_n = - 1/16 \}$, i.e, if $x_{0}=(x_{0}',{(x_{0})}_{n})$ then
\begin{equation}\label{simplify-perception-comparison-FN} 
y_{0}:=(x_{0}', -1/16) \in \Big\{ x_n = - 1/16 \Big\} \Longrightarrow |x_{0}-y_{0}| = |(x_{0})_{n} + 1/16| < 1/8.
\end{equation} 
Clearly, $ \overline{B}_{1/8}(y_{0}) \subset \overline{B}_{3/4}.$ Set 
$$ \mathcal{A}_{0}:={\mathcal{A}}_{\frac{1}{16}, \frac{1}{8}}(y_0) :=\Big\{x\in\mathbb{R}^{n};\ 1/16<|x-y_{0}|<1/8 \Big\}.$$

\noindent Now, we  consider $\Gamma^+$ the (unique) $L^{n}-$strong solution to Dirichlet problem (\ref{pucci+}) (as in Proposition \ref{Pucci-Barriers+})
\begin{equation}\label{pucci+}
\left \{
    \begin{array}{rcll}
{\mathcal{P}}_{\gamma}^{+}[\Gamma^{+}] & = & - \vert \widetilde{f} \vert & \textrm{ in } \mathcal{A}_{0}\\\\
\Gamma^{+} & = & 0 & \textrm{ on } \partial B_{1/16}(y_{0})\\\\
\Gamma^{+} & = & {\Vert u \Vert}_{L^{\infty}(B_1^+)}  & \textrm { on } \partial B_{1/8}(y_{0}),
    \end{array}
    \right.
\end{equation}
\noindent where  $\widetilde{f}$ is the extension of $f$ given by 
\begin{equation}\label{extension of f}
\widetilde{f}(x', x_{n}) :=  \left \{
\begin{array}{lc}
f(x', x_{n}), & \textrm{ if } \ (x',x_{n}) \in B_{1}^{+} \\ \\
0, & \textrm{ if } (x',x_{n}) \in B'_{1}\\ \\
f(x', - x_{n}), & \textrm{ if } \ (x',x_{n}) \in B_{1}^{-}.
\end{array}
\right.
\end{equation}
This way, $||\widetilde{f}||_{L^{q}(B_{1})} = 2||f||_{L^{q}(B_{1}^{+})}$. By the maximum principle (i.e, ABP estimate (Theorem 3.3 in \cite{CCKS})),
$$\Gamma^{+}\geq 0 \ \textnormal { in } \ \overline{\mathcal{A}_{0}}.$$
Moreover, Proposition \ref{Pucci-Barriers+} gives
\begin{equation}\label{estimate-from-above-pucci-barrier+}
\Gamma^{+}(x) \leq A_{0}\Big(||u||_{L^{\infty}(B_{1}^{+})}+ ||f||_{L^{q}(B_{1}^{+})} \Big)dist(x,\partial B_{1/16}(y_{0})) \quad \forall x\in\overline{\mathcal{A}_{0}}.
\end{equation}
\noindent where $A_{0}=A_{0}(n,q,\lambda,\Lambda, \gamma)>0$ is universal. It is easy to verify  that 
$$u, - u \in \underline{S}(\gamma, - \vert {f} \vert) \quad \textnormal { in } \quad B_{\frac{1}{8}}(y_0) \cap \lbrace x_n \geq 0 \rbrace.$$
We observe also that 
$$V:=B_{1/8}(y_{0})\cap \{ x_{n}> 0\} \subset B_{3/4}^{+}, $$ 
$$u, - u \leq \mathcal{B}:=\Gamma^+ + \sup_{B'_1} \vert u \vert \ \ \textrm{ on } \partial (B_{1/8}(y_{0})\cap \{ x_{n} >0  \})=\partial V.$$
By comparison principle with the strong solution $\mathcal{B}$ applied to $\mathcal{P}_{\gamma}^{+}$ (Theorem 2.10 of \cite{CCKS}), we obtain
$$u, - u \leq \mathcal{B} \textrm{ in } \ B_{\frac{1}{8}}(y_0) \cap \lbrace x_n \geq 0 \rbrace.$$
Now, estimate (\ref{estimate-from-above-pucci-barrier+}) gives (once $x_0 \in B_{\frac{1}{8}}(y_0) \cap \lbrace x_n \geq 0 \rbrace$, by (\ref{simplify-perception-comparison-FN}))
\begin{eqnarray} \label{main estimate in the prop}
\vert u(x_0) \vert & \leq & \Gamma^+(x_{0}) + \sup_{B'_1} \vert u \vert \\
& \leq & A_{0}\Big(||u||_{L^{\infty}(B_{1}^{+})}+ ||f||_{L^{q}(B_{1}^{+})} \Big)dist(x,\partial B_{1/16}(y_{0}))  + \sup_{B'_1} \vert u \vert \nonumber \\
& = & A_{0} \left( {\Vert u \Vert}_{L^{\infty}(B_1^+)} + {\Vert f \Vert}_{L^{q}(B_1^+)} \right) \cdot (x_0)_n + \sup_{B'_1} \vert u \vert, \nonumber
\end{eqnarray}
\noindent The proof is completed by taking  ${D}_{1}:=\max\big\{16, A_{0} \big\}$.
\end{proof}

\begin{remark}\label{bdry-lip-type-estimate-scaled-version-extended} It is clear from the proof of Proposition  \ref{bdry-lip-type-estimate-scaled-version} that in the estimate \ref{Lip estimate fully - scaled} we can replace $B_{r/2}^{+}$ by $B_{3r/4}^{+}$ perhaps changing slightly the universal constants. We remark however that in this case the new universal constants will have exactly the same dependence as the old ones.
\end{remark}
\begin{remark} \label{dependence of universal constants of gamma}
By the monotonicity dependence of the universal constants $A_{1}$ and $A_{2}, A_{3}$ in the previous results on the parameter $\gamma_{R_{0}}$ (see Remark \ref{monotonicity-in-gamma}), we conclude that in the particular case where $\gamma \leq  1$ and $R_{0} \leq 1$ the universal constants $D_{2}$, $D_{3}$ in the Proposition \ref{Boundary Behaviour ffb} and $D_{1}$ in Proposition \ref{bdry-lip-type-estimate-scaled-version} do not depend on $\gamma_{_{R_{0}}}$. More precisely, in this case, $D_{1}, D_{2}$ and $D_{3}$ depend only on $n,q,\lambda, \Lambda.$
\end{remark}

\section{Sharpness of Lipschitz regularity up to the boundary with respect to RHS}
\noindent Let us consider the function given by 

$$u(x,y) = \left \{
\begin{array}{ll}\label{function-krylov-counter-example} 
 y\cdot \bigg|\ln\sqrt{(x^2+y^2)}\bigg|^{1/4} & \forall (x,y) \in B_{1/2}^{+}\subset\mathbb{R}^{2}, \\\\
 0 &  \textrm{ on } \big\{y=0\big\}\cap \overline{B}_{1/2}^{+}\subset \mathbb{R}^{2}.
\end{array}
\right.
$$
Using polar coordinates in $\mathbb{R}^{2},$ i.e, $r=\sqrt{(x^2+y^2)}$, we see that for every $(x,y)\in \overline{B}_{1/2}^{+}$
$$0\leq u(x,y) \leq  r\cdot |\ln r|^{1/4} \to 0 \quad \textnormal { as } \quad r\to 0.$$
This way, $u\in C^{0}(\overline{B}_{1/2}^{+})\cap C^{\infty}(B_{1/2}^{+}).$ Let us denote $w(r)=|\ln r|^{1/4}$ for $r>0$. Then,

\begin{eqnarray*}
\Delta u(x,y) & = & \Big(w''(r) + \frac{1}{r}w'(r) \Big)\cdot y +  2\langle w'(r)\cdot\frac{(x,y)}{r}, e_{2}\rangle \\
& = &  \Big(w''(r) + \frac{3}{r}w'(r) \Big)\cdot y=:f(x,y)
\end{eqnarray*}
Now, direct computations shows that for some constant $A_{0}>0$ we have 

$$ (w'(r))^{2} \leq \frac{A_{0}}{r^{2}|\ln r|^{3/2}}  \quad \forall r\in (0,1/2),$$

$$ (w''(r))^{2} \leq \frac{A_{0}}{r^4|\ln r|^{3/2}} + \frac{A_{0}}{r^{4}|\ln r|^{7/2}}  \quad \forall r\in (0,1/2).$$ 

\noindent This way, by using polar coordinates in $\mathbb{R}^{2}$ and the change of variables $s=\ln r$
\begin{eqnarray*}
\int_{B_{1/2}} |f(x,y)|^{2}dxdy &\leq &18\Bigg(\int_{B_{1/2}} (w''(r)r)^{2}dxdy + \int_{B_{1/2}}(w'(r))^{2}dxdy\Bigg)\\\\
& \leq &  36 \pi \Bigg(\int_{0}^{1/2}(w''(r))^{2}r^{3}dr + \int_{0}^{1/2}(w'(r))^{2}rdr \Bigg)\\\\
& \leq &  36 \pi A_{0} \Bigg(2 \int_{0}^{1/2} \frac{dr}{r|\ln r|^{3/2}} + \int_{0}^{1/2} \frac{dr}{r|\ln r|^{7/2}} \Bigg)\\\\
& = & 36 \pi A_{0} \Bigg(2 \int_{-\infty}^{\ln(1/2)} \frac{ds}{|s|^{3/2}} + \int_{-\infty}^{ln(1/2)} \frac{ds}{|s|^{7/2}} \Bigg) < \infty.
\end{eqnarray*}
Thus, $f\in L^{q}(B_{1/2})$ and by the Calderon-Zygmund theory, $u\in W_{loc}^{2,q}(B_{1/2})$ with $q>2$. Hence, $u$ is a $L^{2}-$strong solution and hence a $L^{2}-$viscosity solution to $\Delta u= f $ in $B_{1/2}$ by Theorem 2.1 in \cite {CKSS}. We observe however that Theorem \ref{bdry-lip-type-estimate-scaled-version} does not hold. Indeed, otherwise 

$$\Bigg| \frac{u(0,y)}{y} \Bigg| \leq C \quad \textnormal { for every } \quad y\in B_{1/4}^{+}.$$ 

\noindent However, by the definition of $u$, it is immediate to check that  $u(0,y)/y \to \infty$ as $y\to 0^{+}.$\\

\noindent  In fact, there is a blow-up of the gradient as $(0,y)$ approaches the origin by $y >0$ since 
$$ \frac{\partial u}{\partial y}(x,y) = \langle \nabla (y\cdot w(r)), e_{2}\rangle=  \langle w(r)e_{2} + y w'(r)\frac{(x,y)}{r}, e_{2}\rangle = w(r) + \frac{w'(r)}{r}y^{2},$$
and in particular, 
$$  \frac{\partial u}{\partial y}(0,y) = w(y) + w'(y)y = |\ln y|^{1/4} - \frac{1}{4}|\ln y|^{-3/4} \to \infty \quad \textnormal { as } \quad y\to 0. $$

\section{Boundary gradient estimate with zero boundary data - 
 Lipschitz implies $C^{1,\alpha}$ on the boundary}
 
\begin{remark}[{\bf Perturbation by linear functions}]\label{perturbation-by-linear} In the sequel, we use several times perturbation of functions in the class $S^{*}(\gamma;f)$ by linear functions. In order to simplify the arguments to come, we point out that if $L$ is a linear function and $\gamma \in L_{+}^{n}(\Omega), f\in L^{n}(\Omega)$ \\
 \begin{itemize}
 \item[$i)$] $u\in\underline{S}(\gamma, f) $ in $\Omega  \Longrightarrow u+L \in \underline{S}(\gamma, f-\gamma|\nabla L|)$ in $\Omega$;\vspace{.3cm}
  \item[$ii)$] $u\in\overline{S}(\gamma, f) $ in $\Omega \Longrightarrow u+L \in \overline{S}(\gamma, f+\gamma|\nabla L|)$ in $\Omega$;\vspace{.3cm}
  \end{itemize}
  In particular, $\forall A,B \in\mathbb{R}$
\begin{equation}\label{class-perturbed-by-linear-functions}
u\in S^{*}(\gamma, f) \Longrightarrow v:=A\cdot u+ B\cdot L \in S^{*}\Big(\gamma, |A|\cdot |f| + \gamma\cdot|B|\cdot|\nabla L|\Big)  
\end{equation}
We now prove item $i)$. Item $ii)$ follows similarly and (\ref{class-perturbed-by-linear-functions}) is a simple consequence from $i)$ and $ii)$. Let $\varphi\in W_{loc}^{2,n}(\Omega)$ and suppose $(u+L)-\varphi = u-(\varphi-L)$ has a local maximum at $x_{0}\in\Omega$. Observe that $\bar\varphi=\varphi-L\in W_{loc}^{2,n}(\Omega).$ Furthermore,
\begin{equation}\label{Pucci-operator-passage}
\mathcal{P}_{\gamma}^{+}[\varphi-{L}] \leq \mathcal{P}_{\gamma}^{+}[\varphi] + \gamma(x)|\nabla L| \quad \textnormal { a.e. } x \textnormal { in } \Omega.
\end{equation} 

$$ \mathcal{P}_{\gamma}^{+}[\varphi](x) - f(x) +\gamma(x)|\nabla L|  =  \Big(\mathcal{P}_{\gamma}^{+}[\varphi](x) -\mathcal{P}_{\gamma}^{+}[\varphi - L] + \gamma(x)|\nabla L| \Big)+ \Big(\mathcal{P}_{\gamma}^{+}[\varphi - L]  - f(x)\Big)$$
The expression in the first parenthesis on the RHS is nonnegative almost everywhere in $\Omega$ by (\ref{Pucci-operator-passage}). This way, 
$$ \mathcal{P}_{\gamma}^{+}[\varphi](x) - f(x) +\gamma(x)|\nabla L|  \geq \mathcal{P}_{\gamma}^{+}[\varphi - L]  - f(x) \quad \textnormal {a.e. in } \ \Omega.$$
Thus, 
$$ ess\limsup\limits_{x\to x_{0}}  \Big(\mathcal{P}_{\gamma}^{+}[\varphi](x) - f(x) +\gamma(x)|\nabla L| \Big)\geq   ess\limsup\limits_{x\to x_{0}} \Big(\mathcal{P}_{\gamma}^{+}[\varphi - L]  - f(x) \Big) \geq 0$$ 
where for the second inequality we used that  $u\in\underline{S}(\gamma;f)$ in $\Omega.$ This finishes the proof of $i)$ and thus the Remark.
\end{remark}

\begin{proposition} [{{\bf Key step - Universal closing of the aperture of the wedge}}] \label{key-lemma}
Let $u \in C^{0}(\overline{B}_{1}^{+}) \cap S^*(\gamma, f)$ in $B_{1}^{+}$ and $f\in L^q(B_{1}^{+})$ with $q > n$ such that  $0\leq u(x)\leq x_n$ in $B_{1}^{+}$. Then, there exists a (small) universal constant $\overline{\varepsilon}_{0}>0$ such that if
\begin{equation} \label{smallness of RHS and gradient-constant}
\gamma + {\Vert f \Vert}_{L^q(B_1^{+})} \leq \overline{\varepsilon}_{0}
\end{equation}
we can find constants $L_{0}, U_{0}$ and $\overline{\delta_0} \in (0, 1)\footnote{Clearly, whenever necessary, we can assume that $\overline{\delta_{0}}\in[3/4,1).$}$ 
\begin{equation} \label{improve the cone 0}
\left \{
\begin{array}{lc}
0 \leq L_{0} \leq U_{0} \leq 1 &  \\\\
U_{0} - L_{0} \leq  \overline{\delta_0}  &
\end{array}
\right.
\end{equation}
such that
\begin{equation} \label{close of the cone}
L_{0} \cdot x_n \leq u(x) \leq U_{0} \cdot x_n  \quad \textrm{ for all } \  x\in B_{1/2}^{+}.
\end{equation}
Precisely, $\overline{\varepsilon_0}$, $\overline{\delta_0}, L_{0}$ and $ U_{0}$ depends on $n, q, \lambda, \Lambda$.
\end{proposition}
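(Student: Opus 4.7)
The strategy is to run a dichotomy on the value $u\bigl(\tfrac{1}{2}e_n\bigr)\in[0,\tfrac12]$ and apply the boundary Hopf--Ole\u{\i}nik principle (Proposition \ref{Boundary Behaviour ffb}) to either $u$ itself or to the ``reflected'' function $w(x):=x_n-u(x)$. First, I set up $w$. The trapping $0\le u\le x_n$ forces $w\ge 0$ on $B_1^+$ and (since $x_n=0$ on $B'_1$) $u=w=0$ on $B'_1$. Applying (\ref{class-perturbed-by-linear-functions}) in Remark \ref{perturbation-by-linear} with $A=-1$, $B=1$, $L(x)=x_n$ (so $|\nabla L|=1$) gives
$$w\in S^{*}\bigl(\gamma,\ |f|+\gamma\bigr)\quad\text{in } B_1^+,$$
so that both $u$ and $w$ fit the hypotheses of Proposition \ref{Boundary Behaviour ffb} with $r=1$. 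The $L^q$-norm of the RHS for $w$ is controlled by $\|f\|_{L^q(B_1^+)}+C_n\gamma$ with $C_n:=|B_1^+|^{1/q}$.

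\noindent Next, the dichotomy. If $u\bigl(\tfrac12 e_n\bigr)\ge\tfrac14$, Proposition \ref{Boundary Behaviour ffb} applied directly to $u$ yields, for all $x\in\overline{B}^+_{1/2}$,
$$u(x)\ \ge\ \bigl(\tfrac{D_2}{4}-D_3\|f\|_{L^q(B_1^+)}\bigr)x_n.$$
Otherwise $w\bigl(\tfrac12e_n\bigr)\ge\tfrac14$, and the same principle applied to $w$ gives
$$x_n-u(x)\ =\ w(x)\ \ge\ \Bigl(\tfrac{D_2}{4}-D_3\bigl(\|f\|_{L^q(B_1^+)}+C_n\gamma\bigr)\Bigr)x_n\quad\forall x\in\overline{B}^+_{1/2}.$$
Choose
$$\overline{\varepsilon}_0\ :=\ \frac{D_2}{8\,D_3\,(1+C_n)},$$
which depends only on $n,q,\lambda,\Lambda$; under (\ref{smallness of RHS and gradient-constant}) the error terms above are bounded by $D_2/8$. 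Hence in Case A we set $L_0:=D_2/8$, $U_0:=1$, and in Case B $L_0:=0$, $U_0:=1-D_2/8$. In either case $L_0\cdot x_n\le u(x)\le U_0\cdot x_n$ on $\overline{B}^+_{1/2}$ and
$$U_0-L_0\ =\ 1-\tfrac{D_2}{8}\ =:\ \overline{\delta}_0\ \in(0,1),$$
as required.

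\noindent The one delicate point to flag is the appearance of the extra $\gamma$-contribution to the RHS of the equation for $w$: because the $\gamma$-drift acts nontrivially on the linear function $x_n$, the perturbation formula produces the term $\gamma|\nabla x_n|=\gamma$, which is precisely the reason the smallness hypothesis (\ref{smallness of RHS and gradient-constant}) must bound $\gamma$ in addition to $\|f\|_{L^q}$. Since $\overline{\varepsilon}_0\le 1$ forces $\gamma\le 1$, and we are working with $r=1$ so $R_0=1$, Remark \ref{dependence of universal constants of gamma} removes the $\gamma_{R_0}$-dependence from $D_2, D_3$, so all constants produced ($\overline{\varepsilon}_0,\overline{\delta}_0,L_0,U_0$) depend only on $n,q,\lambda,\Lambda$, matching the claim.
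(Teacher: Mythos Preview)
Your proof is correct and follows essentially the same approach as the paper: a dichotomy on $u(\tfrac12 e_n)$ followed by the boundary Hopf--Ole\u{\i}nik estimate (Proposition \ref{Boundary Behaviour ffb}) applied to $u$ or to $w=x_n-u$, with the same handling of the extra $\gamma$-term arising from the linear perturbation and the same appeal to Remark \ref{dependence of universal constants of gamma} to make the constants depend only on $n,q,\lambda,\Lambda$. The only cosmetic differences are that the paper takes $\overline{\varepsilon}_0 := \min\{D_2/(8D_3N_0),1\}$ with $N_0=1+|B_1^+|^{1/q}$ (the explicit $\min$ with $1$ avoids any appearance of circularity) and records the improvement via an auxiliary $\mu_0$, whereas you use $D_2/8$ directly.
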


\begin{proof} We start by setting
\begin{equation}\label{setting-constants-key-step}
\overline{\varepsilon}_{0}:= \min\Bigg\{\frac{D_{2}}{8\cdot D_{3}\cdot N_{0}}, 1\Bigg\}, \quad \mu_{0}:=\frac{A_{2}}{10} \in(0,1/2), \quad N_{0}:=1+|B_{1}^{+}|^{1/q}.
\end{equation}
where $D_{2}>0$ and $D_{3}>0$ are given in Proposition \ref{Boundary Behaviour ffb}. We observe that since $\gamma\leq \overline{\varepsilon_{0}}\leq 1$, Remark \ref{dependence of universal constants of gamma} ensures that $\mu_{0}$ and $\overline{\varepsilon}_{0}$ depends only on $n, q, \lambda,\Lambda$. From now on, we divide the proof in two cases.\\

\noindent \underline{{\bf Case 1:}} Assume $u\left( \frac{1}{2} e_n \right) \geq \frac{1}{4}$.\\

\noindent By assumption and Proposition \ref{Boundary Behaviour ffb} we have
\begin{equation} \label{first main inequality}
u(x) \geq \Bigg( \frac{D_{2}}{4} - D_{3} \cdot{\Vert f\Vert}_{L^q(B_1^+)} \Bigg) \cdot x_n, \quad \forall x \in B_{1/2}^{+}.
\end{equation}
Then, (\ref{setting-constants-key-step}) implies
\begin{equation} \label{first estimate for epsilon zero}
D_{3} \cdot{\Vert f \Vert}_{L^q(B_1^+)}\leq D_{3} \cdot N_0 \cdot \Big( {\Vert f \Vert}_{L^q(B_1^+)} + \gamma \Big)\leq D_{3} \cdot N_{0} \cdot \overline{\varepsilon_{0}}  \leq \frac{D_{2}}{8}.
\end{equation}
Thus, 
\begin{equation} \label{improve of the cone I}
u(x) \geq \frac{D_{2}}{8} \cdot x_n \geq \mu_{0}\cdot x_{n}, \ \ \ \ \forall \ x \in B_{1/2}^+.
\end{equation}
\noindent We set $L_{0} := \mu_{0}$ \ and \ $U_{0} := 1$. It proves the result in this case for $\overline{\delta_{0}} := 1-\mu_{0} \in(0,1)$.\\

\noindent \underline{{\bf Case 2:}} Assume $u\left( \frac{1}{2} e_n \right) < \frac{1}{4}$.\\

\noindent  Then we define $w(x):= x_n - u(x)$ ~for $x\in B_{1}^{+}$. Remark \ref{perturbation-by-linear} implies
$$w \in S^*(\gamma, |f| + \gamma) \ \ \textrm{in} \ \ B_1^+, \quad w \left(\frac{1}{2} e_n \right) \geq \frac{1}{4} \ \ \textrm{ and } \ \ 0 \leq w(x) \leq x_n, \quad \forall x\in B_{1}^{+}.$$
Once more, by assumption and Proposition \ref{Boundary Behaviour ffb}, we have for every $x \in B_{1/2}^+$
\begin{equation} \label{second main inequality}
w(x) \geq \Bigg( \frac{D_{2}}{4} - D_{3} \cdot {\Vert |f| + \gamma \Vert}_{L^q(B_1^+)} \Bigg) \cdot x_n.
\end{equation}
 Now, (\ref{setting-constants-key-step}) implies 
\begin{equation} \label{second estimate for epsilon zero}
D_{3} \cdot{\Vert |f| +\gamma \Vert}_{L^q(B_1^+)}\leq D_{3} \cdot N_0 \cdot \Big( {\Vert f \Vert}_{L^q(B_1^+)} + \gamma \Big)\leq D_{3}\cdot N_{0}\cdot \overline{\varepsilon_{0}}  \leq \frac{D_{2}}{8}.
\end{equation}
This way, as before, 
\begin{equation} \label{improve of the cone II}
w(x) \geq \mu_0 \cdot x_n, \ \ \ \ \forall \ x \in B_{1/2}^{+}.
\end{equation}
Now from definition of $w$ we obtain
$$ 0 \leq u(x) \leq \big(1-\mu_{0}\big)\cdot x_{n}, \quad \forall \ x \in B_{1/2}^{+}.$$
We define $L_{0} := 0 $ \ and \ $U_{0} :=\big(1-\mu_{0}\big)$. Once again, the results holds for $\overline{\delta_0} = 1 - \mu_{0}$ and this finishes the proof. 
\end{proof}

\begin{proposition}[{{\bf Normalized version of Lipschitz implies $C^{1,\alpha}$ on the boundary}}] \label{iteration-scheme}
Let $u \in C^{0}(\overline{B}_{1}^{+}) \cap S^*(\gamma, f)$ in $B_{1}^+$ and $f\in L^q(B_{1}^+)$ with $q > n$. Assume, $|u(x)|\leq x_{n}$ in $B_{1}^{+}$. Then, there exists $\varepsilon_0\in(0,1)$ universal constant so that if
\begin{equation} \label{smallness of RHS and gradient-constant}
\gamma + {\Vert f \Vert}_{L^q(B_1^+)} \leq \varepsilon_0,
\end{equation}
we can find constants $A_k, B_k$ and a universal $\delta_0 \in (0, 1)$ satisfying
\begin{equation} \label{improve the cone}
\left \{
\begin{array}{lc}
A_{0}:=-1\leq \cdots \leq A_{k - 1} \leq A_k \leq \cdots \leq B_k \leq B_{k - 1} \leq \cdots \leq 1=:B_{0}, &  \\\\
 B_k - A_k \leq {\delta_0}^k \cdot (B_0 - A_0),  \quad \forall k\geq 0.

\end{array}
\right.
\end{equation}
such that
\begin{equation}\label{trapping}
A_k \cdot x_n \leq u(x) \leq B_k \cdot x_n, \ \ \ \ \textrm{ in } \ B_{2^{-k}}^+.
\end{equation}
In particular, for $0<r\leq 1,$
\begin{equation}\label{oscillation-decay}
 \underset{B_{r}^{+}}{osc}~\Bigg(\frac{u}{x_{n}}\Bigg)\leq \overline{E_{0}} \cdot r^{\alpha_{0}} \quad \textnormal { where } \quad \overline{E_{0}}=2\delta_{0}^{-1}\leq 3.
\end{equation}

\noindent This implies that there exist $\Psi_0 \in {\mathbb{R}}$ and $\alpha_0 \in (0, 1)$ such that  
\begin{equation} \label{normal derivative estimate in the zero}
\vert u(x) - \Psi_0 \cdot x_n \vert \leq E_{0} \vert x \vert^{\alpha_0}x_{n} \quad \forall x \in B_{1/2}^{+}, \qquad \vert \Psi_0 \vert \leq 1.
\end{equation}
Precisely, $\alpha_0 \in (0,1) , \varepsilon_{0}, \delta_{0}, E_{0}$ and $\overline{E_{0}}$ depend only on  $n, \lambda, \Lambda$ and $q$.
\end{proposition}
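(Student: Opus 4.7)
The plan is to iterate the aperture-closing step of Proposition~\ref{key-lemma} dyadically, using Remark~\ref{perturbation-by-linear} to subtract a linear function and Remark~\ref{scaling-remark} to rescale to the hypotheses of Proposition~\ref{key-lemma} at every scale. I would first fix the universal constants
\[
\delta_{0}:=\max\{\overline{\delta_{0}},\,2^{(n/q)-1}\}\in(0,1),\qquad \alpha_{0}:=\log_{2}(1/\delta_{0})\in(0,1),
\]
where $\overline{\delta_{0}}$ is produced by Proposition~\ref{key-lemma}, and then pick $\varepsilon_{0}$ small (depending only on $n,q,\lambda,\Lambda$) so the rescaled smallness condition continues to fall under $\overline{\varepsilon_{0}}$ at every step. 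The second term $2^{(n/q)-1}$ is dictated by the $L^{q}$ scaling of the right-hand side; see the last paragraph.

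I would set $A_{0}=-1$, $B_{0}=1$ as base case (true by $|u|\le x_{n}$) and argue by induction. Given $A_{k},B_{k}$ satisfying \eqref{improve the cone}--\eqref{trapping} at scale $2^{-k}$, arrange---by enlarging $[A_{k},B_{k}]$ inside $[A_{k-1},B_{k-1}]$ if necessary, which there is always room to do because $B_{k-1}-A_{k-1}=2\delta_{0}^{k-1}>2\delta_{0}^{k}$---so that $B_{k}-A_{k}=2\delta_{0}^{k}$ exactly. If $B_{k}=A_{k}$ then $u\equiv A_{k}x_{n}$ on $B_{2^{-k}}^{+}$ and the iteration terminates. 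Otherwise, define on $B_{1}^{+}$
\[
v(x):=\frac{u(2^{-k}x)-A_{k}\cdot 2^{-k}x_{n}}{(B_{k}-A_{k})\cdot 2^{-k}},
\]
so that $0\le v\le x_{n}$. By Remark~\ref{perturbation-by-linear} the function $u-A_{k}x_{n}$ lies in $S^{*}(\gamma,|f|+\gamma|A_{k}|)\subseteq S^{*}(\gamma,|f|+\gamma)$, and by Remark~\ref{scaling-remark} the function $v$ lies in $S^{*}(\overline{\gamma}_{k},\overline{f}_{k})$ in $B_{1}^{+}$ with $\overline{\gamma}_{k}=2^{-k}\gamma$ and
\[
\|\overline{f}_{k}\|_{L^{q}(B_{1}^{+})}\le\frac{2^{-k(1-n/q)}}{B_{k}-A_{k}}\bigl(\|f\|_{L^{q}(B_{1}^{+})}+\gamma|B_{1}^{+}|^{1/q}\bigr).
\]
Since $B_{k}-A_{k}=2\delta_{0}^{k}$ and $\delta_{0}\ge 2^{(n/q)-1}$, the ratio $2^{-k(1-n/q)}/(B_{k}-A_{k})$ is bounded by $1/2$ uniformly in $k$, so $\overline{\gamma}_{k}+\|\overline{f}_{k}\|_{L^{q}(B_{1}^{+})}\le\overline{\varepsilon_{0}}$ for $\varepsilon_{0}$ chosen small. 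Applying Proposition~\ref{key-lemma} to $v$ produces $0\le L_{0}\le U_{0}\le 1$ with $U_{0}-L_{0}\le\overline{\delta_{0}}$ and $L_{0}x_{n}\le v\le U_{0}x_{n}$ in $B_{1/2}^{+}$. Translating back via $y=2^{-k}x$ gives $A_{k+1}:=A_{k}+(B_{k}-A_{k})L_{0}$ and $B_{k+1}:=A_{k}+(B_{k}-A_{k})U_{0}$, and one directly reads off the trapping at scale $2^{-(k+1)}$, the monotonicity $A_{k}\le A_{k+1}\le B_{k+1}\le B_{k}$, and the shrinkage $B_{k+1}-A_{k+1}\le\overline{\delta_{0}}(B_{k}-A_{k})\le\delta_{0}(B_{k}-A_{k})$, completing the induction.

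The monotone sequences $\{A_{k}\},\{B_{k}\}\subset[-1,1]$ converge to a common limit $\Psi_{0}$ with $|\Psi_{0}|\le 1$. For $x\in B_{1/2}^{+}\setminus\{0\}$, picking $k\ge 0$ with $2^{-(k+1)}<|x|\le 2^{-k}$ and using $A_{k}x_{n}\le u(x)\le B_{k}x_{n}$ yields
\[
\Big|\tfrac{u(x)}{x_{n}}-\Psi_{0}\Big|\le B_{k}-A_{k}\le 2\delta_{0}^{k}=2\delta_{0}^{-1}\delta_{0}^{k+1}\le 2\delta_{0}^{-1}|x|^{\alpha_{0}},
\]
which gives \eqref{normal derivative estimate in the zero} with $E_{0}=2\delta_{0}^{-1}$; the same dyadic comparison applied to $\operatorname{osc}_{B_{r}^{+}}(u/x_{n})\le B_{k}-A_{k}$ yields \eqref{oscillation-decay} with $\overline{E_{0}}=2\delta_{0}^{-1}\le 3$. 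The main obstacle is not the iteration itself but the interplay between the scaling and the $L^{q}$ norm of the inhomogeneity: the rescaled right-hand side picks up the factor $2^{-k(1-n/q)}/(B_{k}-A_{k})$, so any shrinkage rate faster than $2^{(n/q)-1}$ per dyadic step would cause this to blow up and destroy the smallness hypothesis required by Proposition~\ref{key-lemma}. The mild enlargement forcing $B_{k}-A_{k}=2\delta_{0}^{k}$ with $\delta_{0}\ge 2^{(n/q)-1}$ is precisely the device that keeps the normalized inhomogeneity uniformly small in $k$.
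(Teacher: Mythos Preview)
Your proof is correct and follows essentially the same strategy as the paper: iterate Proposition~\ref{key-lemma} dyadically, subtracting $A_k x_n$ via Remark~\ref{perturbation-by-linear}, rescaling via Remark~\ref{scaling-remark}, and choosing $\delta_0$ large enough (relative to $2^{n/q-1}$) so that the $L^q$ scaling factor $2^{-k(1-n/q)}/(B_k-A_k)$ stays bounded. The only cosmetic differences are that the paper takes $\delta_0=\tfrac12\bigl(\max\{\overline{\delta_0},2^{n/q-1}\}+1\bigr)$ rather than the max itself, and that the paper normalizes directly by the upper bound $2\delta_0^k$ (so $0\le v_k\le x_n$ automatically) instead of first enlarging $[A_k,B_k]$ to gap exactly $2\delta_0^k$; both devices serve the same purpose and your enlargement is legitimate since $2\delta_0^k\le B_{k-1}-A_{k-1}$.
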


\begin{proof} We recall  $\overline{\varepsilon_{0}}$ and $\overline{\delta_{0}}$ from Proposition \ref{key-lemma} and set the following constants
\begin{equation}\label{new-constants-for-clsoing-lemma}
 K_{0}:=3\big(1+|B_{1}^{+}|^{1-n/q}\big), \quad \varepsilon_{0}:= \frac{\overline{\varepsilon}_{0}}{ K_{0}} < \overline{\varepsilon}_{0}, \quad  \delta_{0}:=\frac{\max\Big\{2^{n/q-1},\overline{\delta_{0}}\Big\} +1}{2}. 
 \end{equation}
\noindent This way, $\varepsilon_{0}=\varepsilon_{0}(n,q,\lambda, \Lambda)$  and $\delta_{0}=\delta_{0}(n,q,\lambda, \Lambda)$ and both of them are in $(0,1).$\\

\noindent Now, recall that for $0<a<b \Longrightarrow (a+b)/2 \in (a,b)$. This way, since $\overline{\delta_{0}}\in (0,1)$

$$q>n  \Longrightarrow  \max\Big\{2^{n/q-1},\overline{\delta_{0}}\Big\} < 1 \Longrightarrow  \delta_{0} \in \Bigg(\max\Big\{2^{n/q-1},\overline{\delta_{0}}\Big\}, 1\Bigg).$$
From this, we conclude
\begin{equation}\label{delta-less-delta-bar+crucial-delta-relation-n/q}
 \zeta_{0}:=\frac{2^{n/q-1}}{\delta_{0}} \in(0,1)  \quad \textnormal { and } \quad \overline{\delta_{0}}<\delta_{0}\in (0,1).
\end{equation}

\noindent We argue by induction. For the first step, we define the following renormalized function
$$ v(x) := \frac{u(x)+x_{n}}{2} \quad \textnormal { for } \quad  x\in B_{1}^{+}.$$
It is immediate that $0\leq v(x) \leq x_{n}$ in $B_{1}^{+}$ and Remark \ref{perturbation-by-linear} gives
$$v\in S^{*}\Big(\gamma; \frac{|f|+\gamma}{2}\Big) \:\textnormal{ in }B_{1}^{+}\: \textnormal { with } \ \  \Bigg|\Bigg|\frac{|f|+\gamma}{2} \Bigg|\Bigg|_{L^{q}(B_{1}^{+})} \leq K_{0} \cdot\Big(||f||_{L^{q}(B_{1}^{+})}  + \gamma\Big) \leq \overline{\varepsilon}_{0}.$$
Now,  we can apply directly Proposition \ref{key-lemma} to obtain $L_{0}\cdot x_{n} \leq v(x) \leq U_{0}\cdot x_{n}$ in $B_{1/2}^{+}.$ Setting 
$$ A_{1}:=2L_{0}-1, \quad B_{1}:=2U_{0}-1\quad \textnormal { we have }$$ 
$$ A_{1}\cdot x_{n} \leq u(x)\leq B_{1/2}^{+}\cdot x_{n} \quad \textnormal{ in } \: B_{1}^{+} \:\textnormal { with }\quad  B_{1}-A_{1}=2\cdot (U_{0}-L_{0})\leq 2\cdot\overline{\delta_{0}} \leq \delta_{0}\cdot (B_{0}-A_{0}).  $$

\noindent This proves the first inductive step. Now, we assume that the estimates in (\ref{improve the cone}) and (\ref{trapping}) hold true for all the steps $j\leq k$. We prove the step $j=k + 1$. Indeed, we set
$$u_k (x):= \frac{u(2^{-k} x)}{2^{-k}} \quad \textnormal { for } \quad x\in B_{1}^{+}.$$
Now,  $u_k \in S^*(\gamma_k, f_k)$ in $B_1^+$ where $\gamma_k = 2^{-k} \gamma$ and $f_k(x) = 2^{-k} f(2^{-k} x)$ for $x\in B_{1}^{+}$. Also,
\begin{equation}\label{kth-step-induction-estimate-ball}
A_k \cdot x_n \leq u_k(x) \leq B_k \cdot x_n, \ \ \ \ \ \forall \ x \in B_{1}^{+} \quad \textnormal { and } \quad B_{k}-A_{k}\leq \delta_{0}^{k} \cdot (B_{0}-A_{0}).
\end{equation}
Define
$$ v_{k}(x):=\Bigg(\frac{u_{k}(x)-A_{k}\cdot x_{n}}{2\delta_{0}^{k}}\Bigg) \quad \textnormal { for } \: x\in B_{1}^{+}. $$
Clearly, by (\ref{kth-step-induction-estimate-ball}) $0\leq v_{k}(x)\leq x_{n}$ in $B_{1}^{+}$. Remark \ref{perturbation-by-linear} once more implies
$$v_{k}\in S^{*}\Bigg(\gamma_{k}, \frac{|f_{k}| + \gamma_{k}\cdot |A_{k}|}{2\delta_{0}^{k}}\Bigg) \quad \textnormal { in } \: B_{1}^{+}.$$
This way, since $2^{-k} \leq 2^{-k(1-n/q)}$, we have

\begin{eqnarray*}
\gamma_{k} + \Bigg|\Bigg| \frac{|f_{k}| + \gamma_{k}\cdot |A_{k}|}{2\delta_{0}^{k}} \Bigg|\Bigg|_{L^q(B_1^+)} &\leq& 2^{-k}\gamma +\frac{2^{-k(1 - n/q)}\cdot {\Vert f \Vert}_{L^q(B_1^+)} + 2^{-k}\cdot\gamma\cdot |B_{1}^{+}|^{1-n/q} }{2\delta_{0}^{k}} \nonumber\\ \nonumber 
&\leq & \frac{2^{-k(1 - n/q)}}{2\delta_{0}^{k}}\Bigg( {\Vert f \Vert}_{L^q(B_1^+)}  + \big(2\cdot\delta_{0}^{k} + |B_{1}^{+}|^{1-n/q}\big)\cdot \gamma \Bigg)\nonumber \\ \nonumber \\ \nonumber
& \leq & \frac{2^{k(n/q - 1)}}{2\delta_{0}^{k}}\cdot K_{0} \cdot \Big(||f||_{L^q(B_1^+)} + \gamma \Big) \nonumber \\ \nonumber \\ \nonumber
& \leq & \frac{1}{2} \cdot \zeta_{0}^{k} \cdot {K_{0}\cdot \varepsilon_{0}} \quad  \nonumber \\ \nonumber \\ \nonumber
&\leq & \frac{\overline{\varepsilon}_{0}}{2} \quad (\textnormal {since } \ \zeta_{0}\in(0,1) \ \textnormal { by } \ (\ref{delta-less-delta-bar+crucial-delta-relation-n/q}) \ \textnormal { and definition of } \varepsilon_{0} \ \textnormal { in} \ (\ref{new-constants-for-clsoing-lemma})).
\end{eqnarray*}
Then, by Propositon \ref{key-lemma}, 
$$ L_{0} \cdot x_n \leq v_{k}(x) \leq U_{0} \cdot x_n  \quad \forall\  x\in B_{1/2}^{+}.$$
\noindent This translates to
$$ A_{k+1}\cdot x_{n} \leq u(x)\leq B_{k+1}\cdot x_{n} \quad \textnormal{ in } \: B_{2^{-(k+1)}}^{+} \quad \textnormal { where }$$

$$  A_{k+1}:=2\delta_{0}^{k}L_{0}+A_{k}, \quad B_{k+1}:=2\delta_{0}^{k}U_{0}+A_{k},$$ 
\noindent with 
$$B_{k+1}-A_{k+1} = 2\delta_{0}^{k}\cdot (U_{0}-L_{0}) \leq 2\delta_{0}^{k}\cdot \overline{\delta_{0}} \leq \delta_{0}^{k+1}\cdot (B_{0}-A_{0}), $$
since $\overline{\delta_{0}} \leq \delta_{0}$ by (\ref{delta-less-delta-bar+crucial-delta-relation-n/q}).\\

\noindent This finishes the proof of the inductive process. By monotonicity there exists $\Psi_0 \in {\mathbb{R}}$ such that
$$\lim_{k \rightarrow \infty} A_k = \Psi_0 = \lim_{k \rightarrow \infty} B_k \quad \textnormal { and } \quad \vert \Psi_0 \vert \leq 1.$$
Now set $\alpha_0 := -\log_{2}\delta_{0}>0$. As pointed out before, we can assume that $\overline{\delta_{0}} > 3/4$. This way, by (\ref{delta-less-delta-bar+crucial-delta-relation-n/q}), we have $\delta_{0}>3/4.$ Thus $\alpha_0$ $\in (0,1)$. Consider $x \in B_{1/2}^+$. Then, there exists $k \geq 1$ such that $2^{-(k + 1)} <\vert x \vert \leq 2^{-k}$. This way,
\begin{eqnarray}
u(x) - \Psi_0 \cdot x_n & =& u(x)+\Big(A_{k}-\Psi_{0}\Big)\cdot x_{n} - A_{k}\cdot x_{n} \nonumber\\ \nonumber
&\leq & \Big(B_{k}-A_{k}\Big)\cdot x_{n} \\ \nonumber
& \leq & 2\cdot\delta_{0}^{k}\cdot x_{n} = 2^{\alpha_0 + 1} (2^{-(k+1)})^{\alpha_0}\cdot x_{n} \\ \nonumber \vspace{.3cm}
& \leq & 2^{\alpha_0 + 1}\cdot |x|^{\alpha_0}\cdot x_{n}
\end{eqnarray}
Similarly, we prove for $x\in B_{1/2}^{+}$ that $u(x) -\Psi_{0}\cdot x_{n} \geq -2^{\alpha_0 + 1}\cdot |x|^{\alpha_0}\cdot x_{n}$. Hence, 
$$|u(x) - \Psi_{0}\cdot x_{n}| \leq 2^{\alpha_0 + 1}|x|^{\alpha_0}\cdot x_{n}, \quad \forall x\in B_{1/2}^{+}. $$
We can take $E_{0}:=2^{\alpha_0 + 1}$. Finally, observe that for $0<r<1$ we can find $k\in\mathbb{N}$ so that $2^{-(k+1)} < r \leq 2^{-k}.$ This way, setting 
$$\psi(r)=\underset{B_{r}^{+}}{osc} ~\frac{u}{x_{n}},$$
we have by (\ref{improve the cone})
$$\psi(r) \leq \psi(2^{-k})\leq B_{k}-A_{k}\leq 2\delta_{0}^{k}= 2\delta_{0}^{-1}\delta_{0}^{k+1}\leq 2\delta_{0}^{-1} 2^{-(k+1)\alpha_{0}}\leq \overline{E_{0}}\cdot r^{\alpha_{0}} $$
which proves (\ref{oscillation-decay}). This finishes the proof.
\end{proof}

\begin{remark}\label{extension-of-C-{1,alpha}-inequality-to-B1} It is trivial to see that (\ref{normal derivative estimate in the zero}) in Proposition \ref{iteration-scheme} implies
\begin{equation}\label{estimate-in-whole-B1plus}
\vert u(x) - \Psi_0 \cdot x_n \vert \leq E_{0} \vert x \vert^{1+\alpha_0} \quad \forall x \in B_{1/2}^{+}.
\end{equation}
\noindent We can easily observe, that perhaps changing the constant $E_{0}$, we can make it holds for the whole $B_{1}^{+}.$ Indeed, set $\overline{C}:= 2^{2+\alpha_0}.$ Now, (\ref{estimate-in-whole-B1plus}) holds with $C$ replaced by $\overline{E}_{0}:=\max\big\{E_{0}, \overline{C}\big\}$ for every $x\in B_{1}^{+}.$ To see this, it is enough to check it just outside $B_{1/2}^{+}.$ So, for $x\in B_{1}^{+}\setminus B_{1/2}^{+}$ we have 
$$|u(x) - \Psi_{0}\cdot x_{n}|\leq 2 = 2^{2 + \alpha_0} \cdot 2^{-{(1 + \alpha_0})}  \leq 2^{2+{\alpha_0}}\cdot |x|^{1+\alpha_0}.$$
\end{remark}

\begin{center}
{\bf \underline{Proof of Theorem \ref{boundary krylov thm Lq version}}}
\end{center}

\begin{proof} By scaling, it is enough to prove the result for $r=1$. We make the following \vspace{1mm}

\noindent \underline{\bf Claim:} There exists a constant $G_0 $ such that
\begin{equation}\label{diff-at-zero}
\left\vert u(x) - G_0 \cdot x_n \right\vert \leq \overline{E}_{1} \Big( {\Vert u \Vert}_{L^{\infty}(B_1^+)} + {\Vert f \Vert}_{L^q(B_1^+)} \Big) \vert x \vert^{\alpha_0}x_{n} \quad \forall x\in B_{1/2}^{+}.
\end{equation}
and 
\begin{equation}\label{holder-gradient}
\left\vert G_0 \right\vert \leq \overline{E}_{1} \Big( {\Vert u \Vert}_{L^{\infty}(B_1^+)} + {\Vert f \Vert}_{L^q(B_1^+)} \Big),
\end{equation}
and 
\begin{equation}\label{scaled-oscillation}
 \underset{B_{r}^{+}}{osc}~\Bigg(\frac{u}{x_{n}}\Bigg)\leq  \widehat{E_{0}} \cdot \Big(||u||_{L^{\infty}(B_{1}^{+})} + ||f||_{L^{q}(B_{1}^{+})} \Big) \cdot r^{\alpha_{0}} \quad \textnormal { for } 0<r<\bar{\varepsilon}, 
 \end{equation}
where $\bar{\varepsilon}, \widehat{E_{0}}$ and $\overline{E_{1}}$ are positive universal constants depending only on $n,q,\lambda,\Lambda,\gamma.$\\

\noindent \underline{Proof of the Claim:} Set $K:=D_{1} \Big(||u||_{L^{\infty}(B_{1}^{+})} + ||f||_{L^{q}(B_{1}^{+})} \Big)$. Then, by Proposition \ref{bdry-lip-type-estimate-scaled-version}
\begin{equation}\label{first-cone-trapping}
|u(x)| \leq K\cdot x_{n}, \quad \forall x\in B_{1/2}^{+}. 
\end{equation}
\noindent Now, we set  a universal constant given by
$$ \overline{\varepsilon}: = \min\Bigg\{\frac{1}{4}, ~~\Bigg(\frac{\varepsilon_{0}}{2(\gamma + D_{1})}\Bigg)^{\frac{1}{1-n/q}} \Bigg\}.$$
\noindent Since $1-n/q\in(0,1]$ and $\bar{\varepsilon}\in(0,1)$ we have $\bar{\varepsilon} \leq \bar{\varepsilon}^{1-n/q}$. Thus, 
\begin{equation}\label{normalizing-scaling} \bar{\varepsilon}\gamma + (\bar{\varepsilon})^{1-n/q}\cdot D_{1}^{-1} \leq 2(\bar{\varepsilon})^{1-n/q}(\gamma + D_{1}^{-1}) \leq \varepsilon_{0}.
\end{equation}
\noindent Define,
$$ v(x) :=  \frac{u(\bar{\varepsilon}x)}{\bar{\varepsilon}\cdot K}\quad \textnormal { for } \quad x\in B_{1}^{+}.$$
\noindent Now, since $ \bar{\varepsilon}< 1/2$, by (\ref{first-cone-trapping}) we observe that $|v(x)| \leq x_{n}$ in $B_{1}^{+}$,
$$v\in S^{*}\Big(\bar{\gamma}; \bar{f}\Big) \quad \textnormal { where } \quad \bar{\gamma} = \overline{\varepsilon}\cdot \gamma, \quad \bar{f}(x) = \frac{\bar{\varepsilon}\cdot f(\bar{\varepsilon}x)}{K}\quad \textnormal { for } \quad x\in B_{1}^{+}. $$
Hence, we have by (\ref{normalizing-scaling})
$$ \bar{\gamma} + ||\bar{f}||_{L^{q}(B_{1}^{+})}  =\bar{\varepsilon}\gamma + (\bar{\varepsilon})^{1-n/q}\cdot D_{1}^{-1} \leq \varepsilon_{0}.$$
\noindent Applying Proposition \ref{iteration-scheme} to $v$ and translating back in terms of  $u$ we conclude, 
\begin{equation}\label{differentiability-tiny-scale}
\vert u(x) - G_{0} \cdot x_n \vert \leq E_{0}^{*} \cdot K\cdot \vert x \vert^{\alpha_0}x_{n} \quad \forall x \in B_{\bar{\varepsilon}/2}^{+} \quad \textnormal { with }
\end{equation}
$$G_{0}:= \Psi_{0}\cdot K, \quad E_{0}^{*}=E_{0}\cdot(\bar{\varepsilon})^{-\alpha_0} \quad\textnormal { and thus }\quad  |G_{0}|\leq K. $$

\noindent Setting $C^{*}=2(2/\bar{\varepsilon})^{\alpha_0}$ we have by Proposition \ref{bdry-lip-type-estimate-scaled-version}
that for $x\in B_{1/2}^{+}\setminus B_{\varepsilon/2}^{+}$, 
\begin{equation}\label{differentiability-big-scale}
|u(x) - G_{0}\cdot x_{n}| \leq 2K\cdot x_{n} = C^{*}\Big(\frac{\bar{\varepsilon}}{2}\Big)^{\alpha_0}\cdot K \cdot x_{n} \leq C^{*}K|x|^{\alpha_0}x_{n}.
\end{equation}
Furthermore,
\begin{equation}\label{scaled-oscillation-proof}
 \underset{B_{r}^{+}}{osc}~\Bigg(\frac{u}{x_{n}}\Bigg)\leq \big(\bar{\varepsilon}K \overline{E_{0}}\big) \cdot r^{\alpha_{0}} \quad \textnormal { for } \ 0<r<\bar{\varepsilon} \quad \textnormal { and } \ \overline{E_{0}} \ \textnormal{ as in } (\ref{oscillation-decay}).
\end{equation}
\noindent Thus, setting $\widehat{E_{0}} = \bar{\varepsilon}\overline{E_{0}}D_{1}$ and $\overline{E}_{1}:= D_{1}\max\Big\{E_{0}^{*},C^{*},1 \Big\}\geq D_{1}$ the claim is proven.

\noindent Let $x_{0}\in B'_{1/2}.$ By considering $v_{0}(x):=4u(x_{0}+x/4)$ for $x\in B_{1}^{+}$ we see 
\begin{equation}\label{translation-argument}
v_{0}\in S^{*}(\gamma/4;f_{0})\subset S^{*}(\gamma;f_{0}) \quad \textnormal{ where } \quad f_{0}(x):=4^{-1}f(x_{0}+x/4) \quad \forall x\in B_{1}^{+}.
\end{equation}
\noindent Applying the claim to $v_{0}$ and translating the results back to $u$ we conclude, $\forall x_{0}\in B'_{1/2}, \forall x\in B_{1/8}^{+}(x_{0})$,
\begin{equation}\label{krylov-bdry-zero-final-1}
|u(x) - A(x_{0})\cdot x_{n}|\leq 4^{\alpha_{0}+1}\overline{E}_{1}\Big(||u||_{L^{\infty}(B_{1}^{+})} + ||f||_{L^{q}(B_{1}^{+})} \Big)|x-x_{0}|^{\alpha_0}\cdot x_{n},
\end{equation}
\begin{equation}\label{krylov-bdry-zero-final-2}
|A(x_{0})|\leq 4\overline{E}_{1}\cdot \Big(||u||_{L^{\infty}(B_{1}^{+})} + ||f||_{L^{q}(B_{1}^{+})} \Big),
\end{equation} 
and
\begin{equation}\label{final-estimate-oscillation}
 \underset{B_{r}^{+}(x_{0})}{osc}~\Bigg(\frac{u}{x_{n}}\Bigg)\leq \widehat{E_{0}} \cdot \Big(||u||_{L^{\infty}(B_{1}^{+})} + ||f||_{L^{q}(B_{1}^{+})} \Big) \cdot r^{\alpha_{0}} \quad \textnormal { for } \ 0<r<\frac{\bar{\varepsilon}}{4}.
 \end{equation}
Once more, Lipschitz estimates up to the boundary, Proposition \ref{bdry-lip-type-estimate-scaled-version} and Remark \ref{bdry-lip-type-estimate-scaled-version-extended} gives for $x\in B_{3/4}^{+}\setminus B_{1/8}^{+}$ 
\begin{eqnarray}\label{krylov-bdry-zero-final-3}
|u(x) - A(x_{0})\cdot x_{n}| &\leq&  D_{1}\Big(||u||_{L^{\infty}(B_{1}^{+})} + ||f||_{L^{q}(B_{1}^{+})} \Big)\cdot x_{n}\\ \nonumber
& \leq & 8^{-\alpha_0} \Bigg(\frac{D_{1}}{8^{-\alpha_0}}\Bigg) \Big(||u||_{L^{\infty}(B_{1}^{+})} + ||f||_{L^{q}(B_{1}^{+})} \Big)\cdot x_{n} \\ \nonumber
& \leq & \Bigg(\frac{D_{1}}{8^{-\alpha_0}}\Bigg) \Big(||u||_{L^{\infty}(B_{1}^{+})} + ||f||_{L^{q}(B_{1}^{+})} \Big)|x|^{\alpha_0}x_{n}
\end{eqnarray}
Set 
\begin{equation}\label{E1star}
E_{1}^{*} := \max\Big\{ 4^{\alpha_0+1}\overline{E}_{1}, \frac{D_{1}}{8^{-\alpha_0}} \Big\}\geq 4\overline{E}_{1}. 
\end{equation}
Now, the H\"older (gradient) estimate for $||A||_{C^{0,\alpha_{0}}(B_{1/2}^{+})}$follows directly from (\ref{krylov-bdry-zero-final-1}), (\ref{krylov-bdry-zero-final-2}), (\ref{E1star}) and Lemma \ref{taylor-2} by taking $T=E_{1}^{*}$ and $r_{0}=1/8.$\\

\noindent  Now, we prove the H\"older estimate $||u/x_{n}||_{C^{0,\alpha_{0}}(B_{1/2}^{+})}$. We follow the very nice ideas presented in the proof of Theorem 1.2 in \cite{R-O-Se-1}. We define $Q(x):=u(x)/x_{n}$ for $x\in B_{1}^{+}.$ Let $x\in B_{1/2}^{+}$. Let us denote $d_{x}=dist(x, B'_{1})$. By Krylov-Safonov  (interior) H\"older estimate, there exists a universal $\beta_{0}=\beta_{0}(n,\lambda,\Lambda, \gamma)\in (0,1)$ so that  
\begin{equation}\label{Krylov-Safonov-Interior}
d_{x}^{\beta_{0}}[u]_{C^{\beta_{0}}(B_{d_{x}/2}(x))} \leq C_{0}\Big(||u||_{L^{\infty}(B_{1}^{+})} + ||f||_{L^{q}(B_{1}^{+})}\Big)=:C_{0}M_{0}
\end{equation}
where $C_{0}=C_{0}(n,q, \lambda, \Lambda, \gamma)>0$. It is easy to check that  for any $\alpha\in (0,1)$ 
\begin{equation}\label{holder-x_{n}}
||1/x_{n}||_{L^{\infty}(B_{d_{x}/2}(x))}\leq 2d_{x}^{-1}, \quad [1/x_{n}]_{C^{\alpha}(B_{d_{x}/2}(x))} \leq 2d_{x}^{-(1+\alpha)} 
\end{equation}
Now, using the product estimate for the H\"older semi-norm $($for any $\alpha \in(0,1))$
$$[fg]_{C^{}(\Omega)} \leq  [f]_{C^{\alpha}(\Omega)}||g||_{L^{\infty}(\Omega)} + ||f||_{L^{\infty}(\Omega)}[g]_{C^{\alpha}(\Omega)}.$$
we conclude from (\ref{Krylov-Safonov-Interior}) and (\ref{holder-x_{n}}) that 
\begin{equation}\label{holder-seminorn-interior-quotient}
[Q]_{C^{\beta_{0}}(B_{d/2}(x))} \leq (2+2C_{0})M_{0}d_{x}^{-(1+\beta_{0})} = \overline{C_{0}}M_{0}d_{x}^{-(1+\beta_{0})}
\end{equation}
We observe also that (\ref{taylor-general-krylov}) implies that $Q$ is defined in $B'_{1/2}$ and in fact $Q\equiv A$ in $B'_{1/2}.$ As a matter of fact, from (\ref{taylor-general-krylov}) and (\ref{holder-estimate-krylov}), we have
\begin{equation}\label{vertical-estimate-boundary-point-frozen}
|Q(x) - A(z_{0})| \leq E_{1}M_{0}||x-z_{0}|^{\alpha_{0}} \quad\forall z_{0}\in B'_{1/2}, \forall x\in B_{1/2}^{+}
\end{equation}
\begin{equation}\label{Holder-Q-boundary}
|Q(z_{1})-Q(z_{2})| \leq E_{1}M_{0}|z_{1}-z_{2}|^{\alpha_{0}} \quad \forall z_{1},z_{2} \in B'_{1/2}
\end{equation}
Now, we are ready to prove the estimate. Let $x,y\in B_{1/2}^{+}$. We set  
$$r:=|x-y|, \quad d_{x}=d(x)=|x-x_{0}|, \quad d_{y}=d(y)=|x-y_{0}|, \quad x_{0}, y_{0}\in B'_{1/2} $$ 
We assume without losing generality that $d_{y}\leq d_{x}$. In what follows let $p\geq 1$ to be chosen a posteriori. We then analyze two cases\\

\noindent \underline{\bf Case I:} $r\geq d_{x}^{p}/2$\\

\noindent Now by (\ref{vertical-estimate-boundary-point-frozen}) and (\ref{Holder-Q-boundary}) we estimate
\begin{eqnarray*}
|Q(x)-Q(y)| &\leq&  |Q(x)-Q(x_{0})| + |Q(x_{0})-Q(y_{0})| + |Q(y_{0})-Q(y)| \\\\
&\leq & E_{1}M_{0}(d_{x}^{\alpha_{0}} + |x_{0}-y_{0}|^{\alpha_{0}} + d_{y}^{\alpha_{0}}) \\ \\
&\leq & E_{1}M_{0}(2d_{x}^{\alpha_{0}} + (d_{x} + r + d_{y})^{\alpha_{0}})\\\\
&\leq & E_{1}M_{0}(5d_{x}^{\alpha_{0}}+ r^{\alpha_{0}}) \\ \\
&\leq & 5(2^{\frac{\alpha_{0}}{p}}+1) E_{1}M_{0}\cdot  r^{\frac{\alpha_{0}}{p}}\\ \\
& = & 5(2^{\frac{\alpha_{0}}{p}}+1) E_{1}M_{0}\cdot  |x-y|^{\frac{\alpha_{0}}{p}}
\end{eqnarray*}
\noindent \underline{\bf Case II:} $r< d_{x}^{p}/2$\\

\noindent Set $\xi:=1+\beta_{0}$. Then by (\ref{holder-seminorn-interior-quotient}) 
$$|Q(x)-Q(y)| \leq \overline{C_{0}}M_{0}d_{x}^{-\xi}\cdot r^{\beta_{0}} \leq  2^{1-\frac{\xi}{p}}= \overline{C_{0}}M_{0}d_{x}^{-\xi}\cdot r^{\beta_{0}} =  2^{1-\frac{\xi}{p}}\overline{C_{0}}M_{0} |x-y|^{\beta_{0}-\frac{\xi}{p}} $$
We just choose any $p\geq 1$ for which $\beta_{0}-\xi/p>0$. In fact, $p=\beta_{0}^{-1}+2$ does the job. This way, we proved
$$x,y\in B_{1/2}^{+} \Longrightarrow  |Q(x)-Q(y)| \leq C_{00}M_{0}|x-y|^{\tau_{0}} \quad \textnormal {where } \ \tau_{0}=\min\Bigg\{\frac{\alpha_{0}}{p}, \beta_{0}-\frac{\xi}{p}\Bigg\}$$ 
for $C_{00}>0$ and $\tau_{0}$ depending only on $n,q, \lambda, \Lambda, \gamma$. This finishes the proof.
\end{proof}
\begin{center}\label{proof-PLT}
{\bf \underline{Proof of Remark \ref{PLT}} - Phargm\'en-Lindel\"of type result}
\end{center}
\begin{proof} Let us consider the function $u_{r}(x)=r^{-\beta}u(rx)$ for $x\in \mathbb{R}_{+}^{n}$. It is immediate to check that $u_{r}$ satisfies the same growth condition in (\ref{growth-condition-PL}). Moreover, from Remark \ref{scaling-remark}, $u_{r}\in S(0).$ This way,  by Theorem \ref{boundary krylov thm Lq version}, we obtain for $C=C(n,\lambda, \Lambda)>0$
\begin{eqnarray*}
\Bigg[\frac{u}{x_{n}}\Bigg]_{C^{0,\alpha_{00}}(B_{r}^{+})} &= &r^{-\alpha_{00}} \Bigg[\frac{u(rx)}{rx_{n}}\Bigg]_{C^{0,\alpha_{00}}(B_{1}^{+})}\\
& = & r^{-\alpha_{00}-1} \Bigg[\frac{u(rx)}{x_{n}}\Bigg]_{C^{0,\alpha_{00}}(B_{1}^{+})}\\
& = &  r^{-\alpha_{00}-1+\beta} \Bigg[\frac{u_{r}(x)}{x_{n}}\Bigg]_{C^{0,\alpha_{00}}(B_{1}^{+})}\\
& \leq &  C\cdot   r^{-\alpha_{00}-1+\beta}\cdot ||u_{r}||_{L^{\infty}(B_{2}^{+})}  \quad (\textnormal {by estimate } (\ref{krylov-uraltseva-oscillation-estimate}))    \\\\
& \leq & 2CC_{0}  r^{-\alpha_{00}-1+\beta} \to 0 \quad \textnormal { as } r\to\infty.
\end{eqnarray*}
This way, 
$$ \Bigg[\frac{u}{x_{n}}\Bigg]_{C^{0,\alpha_{00}}(\mathbb{R}_{+}^{n})} =0 \Longrightarrow u(x)=Kx_{n} \ \textnormal { in } \ \mathbb{R}_{+}^{n} \quad \textnormal { for some constant}~~ K\in\mathbb{R}.$$
In the case $u\geq 0$, it follows from the results in \cite{Braga-Moreira-Carleson}, that $u(x)\leq C_{0}|x|$ for $x\in \mathbb{R}_{+}^{n}$ where $C_{0}>0$.  The result follows since in this case we can take $\beta=1 < 1 + \alpha_{00}.$
\end{proof}
\section{Improvement of Flatness}

\noindent Our next goal is to extend the previous Theorem (zero boundary data) to arbitrary $C^{1, Dini}$-boundary data on the flat boundary for equations involving unbounded coefficients. In order to do that, we prove a (new) version of improvement of flatness that contemplates the case whrere $\gamma, f\in L^{q}.$ As pointed out before, because of the low regularity of the coefficients, there is no envelope class for this equation.

\begin{proposition}[{{\bf Improvement of flatness}}] \label{improvement-of-flatness}
Let $u \in C^0(\overline{B}_1^+) \cap S^*(\gamma, f)$ in $B_1^+$ where $\gamma, f \in L^{q}(B_1^+)$ with $q>n$. Let $u_{\mid_{B'_r}} = \varphi$ be the boundary data on the flat boundary and $0 \leq \alpha < \alpha_{00}$. This way, for all $\mu_{*} \in (0, \mu_{\alpha})$ we can find (a small) $\varrho_0 = \varrho_0(\alpha, \mu_{*}) > 0$ such that if
\begin{equation} \label{key small conditions}
{\Vert u \Vert}_{L^{\infty}(B_1^+)} \leq 1 \ \ \ \textrm{ and } \ \ \ \ \ {\Vert \gamma \Vert}_{L^{q}(B_1^+)}+ {\Vert f \Vert}_{L^{q}(B_1^+)} + {\Vert \varphi \Vert}_{L^{\infty}(B'_1)} \leq \varrho_0,
\end{equation}
there exist $G_0 \in [-F_{0},F_{0}]$ such that
\begin{equation} \label{contradiction argument}
\big|\big| u(x) - G_0 \cdot x_n \big|\big|_{L^{\infty}(B_{\mu_{*}}^{+})} \leq \frac{1}{4} \mu_{*}^{1 + \alpha}.
\end{equation}
Here, $F_{0}=F_{0}(n, q, \lambda, \Lambda)>0$  is a universal constant. Moreover, $\mu_{\alpha}$ is the universal constant given by 
\begin{equation} \label{choice mu beta}
\mu_{\alpha} := \min \Bigg\{ \frac{2}{3}, \left( \frac{1}{8F_{0}} \right)^{\frac{1}{\alpha_{00} - \alpha}} \Bigg\} \in (0,1).
\end{equation}
\end{proposition}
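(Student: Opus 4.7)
The plan is a standard compactness-and-contradiction argument that hinges on the sharper $C^{1,\alpha_{00}}$ estimate available for the homogeneous limiting problem. Suppose the conclusion fails for some $\alpha \in [0,\alpha_{00})$ and $\mu_{*} \in (0,\mu_\alpha)$. Then one can produce sequences $u_k \in C^0(\overline{B}_1^+) \cap S^*(\gamma_k, f_k)$ with $\|u_k\|_{L^\infty(B_1^+)} \leq 1$ and boundary data $\varphi_k = u_k\vert_{B'_1}$ satisfying
\[
\|\gamma_k\|_{L^q(B_1^+)} + \|f_k\|_{L^q(B_1^+)} + \|\varphi_k\|_{L^\infty(B'_1)} \longrightarrow 0,
\]
for which no $G \in [-F_0, F_0]$ satisfies the sought inequality (\ref{contradiction argument}).

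First, I will establish compactness of $\{u_k\}$. The Lipschitz-type boundary estimate in Proposition \ref{bdry-lip-type-estimate-scaled-version} (together with its extension to $\gamma \in L^q$ noted in the surrounding remarks, exploiting that $\|\varphi_k\|_{L^\infty(B'_1)} \to 0$) combined with the Krylov--Safonov interior H\"older estimate for the class $S^*(\gamma_k, f_k)$ with $L^q$ right-hand side produce uniform equicontinuity of $\{u_k\}$ on every compact subset of $\overline{B}_{3/4}^+$. An Arzel\`a--Ascoli plus diagonalization argument extracts a subsequence converging locally uniformly to some $u_\infty$. Standard stability for $L^n$-viscosity solutions under uniform convergence (exploiting $\gamma_k, f_k \to 0$ in $L^q$) gives $u_\infty \in S^*(0, 0)$ in $B_1^+$, while $\varphi_k \to 0$ uniformly forces $u_\infty = 0$ on $B'_1$, and $\|u_\infty\|_{L^\infty(B_1^+)} \leq 1$.

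Next, I apply the sharp zero-boundary-data estimate, Theorem \ref{boundary krylov thm Lq version} (with $f \equiv 0$, hence exponent $\alpha_{00}$ as in Definition \ref{definition-alpha00}), to $u_\infty$ at the origin. This produces a number $\Psi_0$ with $|\Psi_0| \leq E_1$ and
\[
|u_\infty(x) - \Psi_0 \, x_n| \leq E_1 |x|^{\alpha_{00}} x_n \leq E_1 |x|^{1 + \alpha_{00}}, \qquad x \in B_{1/2}^+.
\]
Setting $F_0 := E_1$, the choice of $\mu_\alpha$ in (\ref{choice mu beta}) yields $F_0 \, \mu_{*}^{\alpha_{00} - \alpha} \leq 1/8$, hence for every $x \in B_{\mu_{*}}^+$,
\[
|u_\infty(x) - \Psi_0 \, x_n| \leq F_0 \, \mu_{*}^{1 + \alpha_{00}} = F_0 \, \mu_{*}^{\alpha_{00} - \alpha} \, \mu_{*}^{1 + \alpha} \leq \tfrac{1}{8} \, \mu_{*}^{1 + \alpha}.
\]
Uniform convergence $u_k \to u_\infty$ on the compact $\overline{B}_{\mu_{*}}^+$ then yields $\|u_k - \Psi_0 \, x_n\|_{L^\infty(B_{\mu_{*}}^+)} < \tfrac{1}{4} \mu_{*}^{1 + \alpha}$ for all $k$ large, with $\Psi_0 \in [-F_0, F_0]$; this contradicts the standing assumption.

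The main obstacle is the compactness step: one must verify equicontinuity uniformly up to the flat boundary, not merely in the interior. This requires the Lipschitz-type boundary estimate (which is sensitive to $\gamma$ being only an $L^q$ function) coupled with a careful application of the stability theorem for $L^n$-viscosity solutions when the coefficients vanish only in norm. Because, as emphasized in the introduction, the class $S^*(\gamma_k, f_k)$ admits no envelope, one cannot reduce the problem to uniformly bounded coefficients; instead the passage to the limit must be carried out viscosity-theoretically through the definition, ensuring that the limiting equation is the purely homogeneous Pucci inequality for which $\alpha_{00}$ governs.
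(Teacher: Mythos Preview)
Your overall compactness-and-contradiction strategy matches the paper's exactly: the application of Theorem~\ref{boundary krylov thm Lq version} (with $f\equiv 0$, hence exponent $\alpha_{00}$) to the limit $u_\infty$, the identification of $F_0$ with the constant appearing there, the $\tfrac18$-to-$\tfrac14$ gap coming from the definition of $\mu_\alpha$, and the final contradiction are all as in the paper.

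The gap is in the compactness step. You invoke Proposition~\ref{bdry-lip-type-estimate-scaled-version} ``together with its extension to $\gamma\in L^q$ noted in the surrounding remarks.'' But that extension (the remark immediately following Remark~\ref{Lipschitz-C^{1,1}}) (i) requires the boundary data to \emph{vanish}, not merely to be small, and more importantly (ii) is stated in the paper as a \emph{consequence} of Theorem~\ref{general-pointwise-boundary-krylov-general-boundary-data} and Corollary~\ref{scaled-general-pointwise-krylov}, results which are themselves proved \emph{using} Proposition~\ref{improvement-of-flatness}. So the route you propose to equicontinuity up to the boundary is circular. The paper sidesteps this by citing an external boundary H\"older estimate for the class $S^*(\gamma,f)$ with unbounded $\gamma,f\in L^q$ (Theorem~2 in \cite{BS}); that single estimate yields equicontinuity on $\overline{B}_{3/4}^+$ directly, with no need to glue a boundary Lipschitz bound to interior Krylov--Safonov and no dependence on the later theorems.

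A smaller point: the stability step is not entirely ``standard'' here, because $\gamma_k$ multiplies $|\nabla\phi|$ for test functions $\phi\in W^{2,n}_{\mathrm{loc}}$, and one only has $\|\gamma_k\|_{L^q}\to 0$. The paper checks explicitly that $\|\gamma_k\,|\nabla\phi|\|_{L^n}\to 0$ by H\"older's inequality together with the Rellich--Kondrachov embedding $W^{1,n}\hookrightarrow L^p$ for all finite $p$, and then invokes the $L^n$-viscosity stability theorem of \cite{Koike-Swiech-WHUnbounded} to conclude $u_\infty\in S(0)$.
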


\begin{proof} We recall from Theorem \ref{boundary krylov thm Lq version} the following: 
\begin{equation}\label{proximal-scale-estimate-for-zero-case}
\left\{ \begin{array}{ll}
\forall v \in S(0) \ \textnormal { in } \ B_{2/3}^{+},\\\\
{\Vert v \Vert}_{L^{\infty}(B_{2/3}^+)} \leq 1, \\\\
v=0 \ \textnormal { on } \: B'_{2/3},
\end{array} \right. \ \Rightarrow \ \left\{ \begin{array}{ll}
\exists ~~A_{v}(0)\in\mathbb{R} \quad \textnormal { so that} \\\\
\left\vert v(x) - A_v(0) \cdot x_n \right\vert \leq F_{0} \vert x \vert^{1 + \alpha_{00}} \ \ \textrm{ for all } \ x \in B_1^+,\\\\
\left\vert A_v(0) \right\vert \leq F_{0} \quad \textnormal { where } \ F_{0}=F_{0}(n,\lambda, \Lambda)>0.
\end{array} \right. 
\end{equation} 
 By the choice of $\mu_{\alpha}$ done in  (\ref{choice mu beta}), we have
 
 \begin{equation}\label{app estimate}
\left\{ \begin{array}{ll}
\forall v \in S(0) \ \textnormal { in } \ B_{2/3}^{+},\\\\
{\Vert v \Vert}_{L^{\infty}(B_{2/3}^+)} \leq 1, \\\\
v=0 \ \textnormal { on } \: B'_{2/3},
\end{array} \right. \ \Rightarrow \ \left\{ \begin{array}{ll}
\exists ~~A_{v}(0)\in\mathbb{R} \quad \textnormal { so that} \\\\
\big|\big| v - A_{v}(0) \cdot x_n \big|\big|_{L^{\infty}(B_{\mu}^{+})} \leq \frac{1}{8} \mu^{1 + \alpha}  \textrm{ for all } \mu \in (0, \mu_{\alpha}),\\\\
\left\vert A_v(0) \right\vert \leq F_{0} \quad \textnormal { where } \ \ F_{0}=F_{0}(n, q, \lambda, \Lambda)>0.
\end{array} \right. 
\end{equation} 
 
 \noindent We now proceed to prove the Proposition \ref{improvement-of-flatness} by contradiction. So, let us suppose the statement of the Proposition is not true. This way, there exist $\mu_{*} \in (0, \mu_{\alpha})$ and a sequence $u_k \in C(\overline{B}_1^+) \cap S^*(\gamma_k, f_k)$ in $B_1^+$ and $\varrho_{k} \to 0$ with
$${\Vert u_k \Vert}_{L^{\infty}(B_1^+)} \leq 1 \ \ \ \textrm{ and } \ \ \ {\Vert \gamma_k \Vert}_{L^{q}(B_1^+)} + {\Vert f_k \Vert}_{L^{q}(B_1^+)} + {\Vert \varphi_k \Vert}_{L^{\infty}(B'_1)} \leq \varrho_k,$$
such that, for each constant $G \in [-F_{0}, F_{0}]$,
\begin{equation} \label{contradiction arg}
\big|\big| u_{k}- G \cdot x_n \big|\big|_{L^{\infty}(B_{\mu_{*}}^{+})} > \frac{1}{4} \mu_{*}^{1 + \alpha} \quad \textnormal { for every } k\geq 1.
\end{equation}
Now, by a Krylov-Safonov H\"older estimate up to the boundary type estimate (Theorem 2 in \cite{BS})\footnote{This is the same argument used in \cite{SS} Lemma 3.4 that also works for our case. As observed there, although Theorem 2 in \cite {BS} is stated for solutions, it in fact holds for the class $S^{*}(\gamma;f)$ we consider here. See Remark done in page 603 of \cite{BS}. For the precise argument (for equations of type (3) in \cite{BS}) see the proofs in page 604 of \cite{BS}.}  we obtain the equicontinuity of $(u_{k})$ in $B_{3/4}^{+}.$ Then, by the Arzela-Ascoli Theorem, we can extract a subsequence of $u_k$ which converges uniformly in $B_{3/4}^+$. Let $u_{\infty}$ be the limit of this subsequence. Now, observe that if $B_{r}\subset B_{1}^{+}$ and  $\phi\in W_{loc}^{2,n}(B_r)$ we have 

$$ \Big\|\mathcal{P}_{\gamma_{k}}^{\pm}(D^2\phi, \nabla\phi) - \mathcal{M}_{\lambda,\Lambda}^{\pm}(D^2\phi)\Big\|_{L^{n}(B_{r})}  + ||f_{k}||_{L^{n}(B_{r})} \\
 = ||(\gamma_{k}\cdot |\nabla \phi|)||_{L^{n}(B_{r})} + ||f_{k}||_{L^{n}(B_{r})} $$
The RHS above is less equal than
\begin{equation}\label{stability-going-to-zero}
 ||\gamma_{k}||_{L^{q}(B_{r})}^{n}\cdot || |\nabla \phi| ||_{L^{n\tau}(B_{r})}^{n} + |B_{r}|^{\frac{1}{n}-\frac{1}{q}}\cdot ||f_{k}||_{L^{q}(B_{r})}
 \end{equation}
where $\tau = q/(q-n)$ is the conjugate exponent of $q/n>1$.  Since, $\phi\in W_{loc}^{2,n}(B_{1}^{+})$ then by Rellich-Kondrachov embedding Theorem (Theorem 9.16 in \cite{Brezis-book}) we have $|\nabla \varphi|\in W^{1,n}(B_{r})  \hookrightarrow L^{p}(B_{r})$ for every $p\in[n,\infty)$. In particular, since the expression in  (\ref{stability-going-to-zero}) goes to zero as $k\to \infty$, we conclude that

$$ \Big\|\mathcal{P}_{\gamma_{k}}^{\pm}(D^2\phi, \nabla\phi) - \mathcal{M}_{\lambda,\Lambda}^{\pm}(D^2\phi)\Big\|_{L^{n}(B_{r})}  + ||f_{k}||_{L^{n}(B_{r})} \to 0 \quad \textnormal { as } k\to \infty.$$

\noindent  By the stability properties of $L^n$-viscosity solutions in this context (Theorem 9.4 of \cite{Koike-Swiech-WHUnbounded}) we conclude 

\begin{equation}
\left\{ \begin{array}{ll}
 u_{\infty} \in S(0) \ \textnormal { in } \ B_{2/3}^{+},\\\\
{\Vert u_{\infty} \Vert}_{L^{\infty}(B_{2/3}^+)} \leq 1, \\\\
u_{\infty}=0 \ \textnormal { on } \: B'_{2/3}.
\end{array} \right. 
\end{equation} 
 
\noindent Therefore, the RHS of the implication in (\ref{app estimate}) holds for $u_{\infty}$, i.e., there exists $A_{u_{\infty}}(0)$ such that 
\begin{equation}\label{improvement-inequality-1} 
\big|\big| u_{\infty}- A_{u_{\infty}}(0) \cdot x_n \big|\big|_{L^{\infty}(B_{\mu_{*}}^{+})} \leq \frac{1}{8} \mu_{*}^{1 + \alpha}.
\end{equation}
and 
$$  \vert A_{u_{\infty}}(0) \vert \leq F_{0}.$$ 
But $u_k \to u_{\infty}$ uniformly in $B_{2/3}^+ \supseteq B_{\mu_{*}}^+$. In particular, for $k$ sufficiently large,
\begin{equation}\label{improvement-inequality-2}
\big|\big|u_{k}-u_{\infty}\big|\big|_{L^{\infty}(B_{2/3}^{+})} \leq \frac{1}{8} {\mu_{*}}^{1 + \alpha}. 
\end{equation}
Thus, combining (\ref{improvement-inequality-1}) and (\ref{improvement-inequality-2}), we arrive at
$$|| u_{k}- A_{u_{\infty}}(0) \cdot x_n ||_{L^{\infty}(B_{\mu_{*}}^{+})} \leq \frac{1}{4} \mu_{*}^{1 + \alpha},$$
which is a contradiction to (\ref{contradiction arg}).  This finishes the proof of the Proposition.
\end{proof}

\section{ Proof of a particular case of Theorem  \ref{general-pointwise-boundary-krylov-general-boundary-data} - Zero tangent plane case} 

\begin{theorem}[{{\bf Pointwise gradient type estimate - zero tangent plane case}}] \label{pointwise-boundary-krylov-general-boundary-data}
Let $u \in C^0(\overline{B}_1^+) \cap S^*(\gamma, f)$ in $B_1^+$ where $\gamma, f \in L^{q}(B_1^+)$ with $q > n$ and $\beta_{*}:=\min\{1-n/q, \alpha_{00}^{-} \}$. Assume the boundary data $\varphi=u_{\mid_{B'_1}} \in C^{1,\omega}(0)$ with zero Taylor's polynomial at the origin where  $\omega\in\mathcal{DMC}(Q,\beta_{*})$ and $\delta_{\omega}=1$.
Let 
$$\vartheta(t):= t^{\beta_{*}}+\int_{0}^{t}\frac{\omega(s)}{s}ds \ \textnormal { for } \ t\in[0,1].  $$

 \noindent Then, there exists a unique $\Psi_0 \in {\mathbb{R}}$ such that for all $x \in B_1^+$,
\begin{equation}\label{eq-1-krylov-general-0-tangent-plane}
\left\vert u(x) - \Psi_0 \cdot x_n \right\vert \leq F_{1}\left( {\Vert u \Vert}_{L^{\infty}(B_1^+)} +[\varphi]_{C^{1, \omega}}(0) + {\Vert f \Vert}_{L^{q}(B_1^+)} \right)  {\vert x \vert}{\vartheta}(|x|),
\end{equation}
\begin{equation}\label{eq-2-krylov-general-0-tangent-plane}
\left\vert \Psi_0 \right\vert \leq F_{1} \left( {\Vert u \Vert}_{L^{\infty}(B_1^+)} +[\varphi]_{C^{1, \omega}}(0) + {\Vert f \Vert}_{L^{q}(B_1^+)} \right).
\end{equation}
Here, the universal constant 
$$F_{1}=F_{1}(n,q, \lambda, \Lambda, ||\gamma||_{L^{q}(B_{1}^{+})}, \alpha_{00}, \delta_{\omega}^{*}, \beta_{*},Q,  \int_{0}^{1}\omega(s)s^{-1}ds).$$
\end{theorem}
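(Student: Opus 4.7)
The plan is to iterate the Improvement of Flatness (Proposition \ref{improvement-of-flatness}) at dyadic scales $\mu_*^k$, extracting at each step a slope correction $\eta_kG_kx_n$, and to exploit the $\beta_*$-compatibility of $\omega$ to close the induction; Dini summability (Lemma \ref{properties-modulus-of-continuity}(ii)) will then give the convergence of the resulting slopes. After normalizing so that $M := \|u\|_{L^\infty(B_1^+)}+[\varphi]_{C^{1,\omega}(0)}+\|f\|_{L^q(B_1^+)} \leq 1$, I would fix $\alpha := \beta_* < \alpha_{00}$ and choose a dyadic scale $\mu_* \in (0,\mu_\alpha) \cap (0,\delta_\omega^*) \cap (0,1/e)$. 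With $\varrho_0 = \varrho_0(\alpha,\mu_*)$ from Proposition \ref{improvement-of-flatness}, I would start the iteration only at a level $k_0 = k_0(\|\gamma\|_{L^q(B_1^+)},\varrho_0,\ldots)$ chosen large enough that $\mu_*^{k_0(1-n/q)}\|\gamma\|_{L^q(B_1^+)}$ is small in the sense made precise below (this is also where the dependence of $T_0$ on $\|\gamma\|_{L^q(B_1^+)}$ enters). Define the target error sequence
$$\eta_k := K\,\bigl[\mu_*^{k\beta_*}+\omega(\mu_*^k)\bigr],\qquad k\geq k_0,$$
for a large constant $K$ to be pinned down in terms of $D_1$ from Proposition \ref{bdry-lip-type-estimate-scaled-version} and of $\mu_*^{-k_0\beta_*}$.

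The central claim, proved by induction on $k\geq k_0$, is the existence of $\Psi_k\in\mathbb{R}$ with $\Psi_{k_0}:=0$ satisfying
$$\|u-\Psi_k x_n\|_{L^\infty(B_{\mu_*^k}^+)}\leq\mu_*^k\eta_k\quad\text{and}\quad|\Psi_{k+1}-\Psi_k|\leq F_0\eta_k.$$
The base case $k=k_0$ follows from Proposition \ref{bdry-lip-type-estimate-scaled-version} (since $\varphi(0)=\nabla\varphi(0)=0$ forces $\sup_{B'_r}|\varphi|$ to be controlled by $[\varphi]_{C^{1,\omega}(0)}\,r\,\omega(r)$). For the inductive step, rescale $v(x):=[u(\mu_*^k x)-\Psi_k\mu_*^k x_n]/(\mu_*^k\eta_k)$ on $B_1^+$, so $\|v\|_\infty\leq 1$; Remarks \ref{perturbation-by-linear} and \ref{scaling-remark} put $v \in S^*(\tilde\gamma_k,\tilde f_k)$ with boundary data $\tilde\varphi_k$, and
$$\|\tilde\gamma_k\|_{L^q(B_1^+)}\leq\mu_*^{k(1-n/q)}\|\gamma\|_{L^q},\quad \|\tilde f_k\|_{L^q(B_1^+)}\leq\frac{\mu_*^{k(1-n/q)}(1+|\Psi_k|\,\|\gamma\|_{L^q})}{\eta_k},\quad \|\tilde\varphi_k\|_\infty\leq\frac{\omega(\mu_*^k)}{\eta_k}.$$
Thanks to $\beta_*\leq 1-n/q$ (so $\mu_*^{k(1-n/q)}\leq\mu_*^{k\beta_*}\leq\eta_k/K$), to the uniform bound $|\Psi_k|\leq F_0\sum_{j\geq k_0}\eta_j$ (finite by Lemma \ref{properties-modulus-of-continuity}(ii) and the geometric series), and to the starting level $k_0$, each of the three smallness quantities can be made $\leq\varrho_0/3$. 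Proposition \ref{improvement-of-flatness} then delivers $G_k$ with $|G_k|\leq F_0$ and $\|v-G_k x_n\|_{L^\infty(B_{\mu_*}^+)}\leq\tfrac14\mu_*^{1+\beta_*}$; setting $\Psi_{k+1}:=\Psi_k+\eta_k G_k$ and scaling back yields a post-iteration error of $\tfrac14\mu_*^{\beta_*}\eta_k$ on $B_{\mu_*^{k+1}}^+$. The closing inequality $\tfrac14\mu_*^{\beta_*}\eta_k\leq\eta_{k+1}$ reduces, in its $\omega$-part, to the $\beta_*$-compatibility $\mu_*^{\beta_*}\omega(\mu_*^k)\leq\omega(\mu_*^{k+1})$.

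For the conclusion, $\Psi_k\to\Psi_0\in\mathbb{R}$ with $|\Psi_0|\leq F_0\sum_j\eta_j\leq F_1$. Given $x \in B_{\mu_*^{k_0}}^+$, pick $k\geq k_0$ with $\mu_*^{k+1}<|x|\leq\mu_*^k$, and write
$$|u(x)-\Psi_0 x_n|\leq\mu_*^k\eta_k + F_0\Bigl(\sum_{j\geq k}\eta_j\Bigr)x_n;$$
Lemma \ref{properties-modulus-of-continuity}(i) converts $\omega(\mu_*^k)$ and $\int_0^{\mu_*^k}\omega(s)/s\,ds$ into multiples of $\int_0^{|x|}\omega(s)/s\,ds$, and $\mu_*^{k\beta_*}$ into a multiple of $|x|^{\beta_*}$, producing the desired $F_1\,|x|\vartheta(|x|)$ bound. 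For $x\in B_1^+\setminus B_{\mu_*^{k_0}}^+$, the Lipschitz estimate $|u(x)|\leq D_1 M|x|$ and $|\Psi_0|\leq F_1$, combined with $|x|\vartheta(|x|)\geq\mu_*^{k_0(1+\beta_*)}$, give the same inequality after enlarging $F_1$ by the factor $\mu_*^{-k_0(1+\beta_*)}$ (this produces the explicit shape of $T_0$ indicated in Remark \ref{dependence-T0}).

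The main technical obstacle is the fact that $\|\gamma\|_{L^q(B_1^+)}$ is \emph{not} assumed to be small: perturbation by the linear function $\Psi_k x_n$ (Remark \ref{perturbation-by-linear}) inflates the effective RHS by $\gamma|\Psi_k|$, and since $|\Psi_k|$ may be as large as $F_0 K \sum_j\eta_j$, one cannot simply increase $K$ to gain smallness of $\|\tilde f_k\|_{L^q}$. The only route is a preliminary dyadic reduction, equivalently starting the induction at the level $k_0$ above, which is precisely why the universal constant $T_0$ is forced to depend on $\|\gamma\|_{L^q(B_1^+)}$. The $\beta_*$-compatibility of $\omega$ is the second indispensable ingredient, as it is exactly the condition that lets the $\omega$-part of $\eta_k$ propagate cleanly from scale $k$ to scale $k+1$ once Proposition \ref{improvement-of-flatness} has been applied.
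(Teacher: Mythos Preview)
Your proof is correct and follows essentially the same line as the paper: both iterate Proposition~\ref{improvement-of-flatness} with the error sequence $\omega_0(\mu_*^k)=\mu_*^{k\beta_*}+\omega(\mu_*^k)$, close the induction via the $\beta_*$-compatibility of $\omega_0$, and sum the slope increments using the Dini estimate of Lemma~\ref{properties-modulus-of-continuity}(ii). The only packaging difference is that the paper splits into a Part~I (assuming $\|\gamma\|_{L^q}\le\gamma_0$ and iterating from $k=0$) and a Part~II (scaling by $r_0=(\gamma_0/\|\gamma\|_{L^q})^{1/(1-n/q)}$ to reduce to Part~I), whereas you fold this reduction into the iteration by starting at level $k_0$; the two are equivalent with $\mu_*^{k_0}$ playing the role of $r_0$.

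One small caveat: Proposition~\ref{bdry-lip-type-estimate-scaled-version}, which you invoke for the base case $k=k_0$ and again for $x\in B_1^+\setminus B_{\mu_*^{k_0}}^+$, is stated only for $\gamma$ a nonnegative \emph{constant}, not for $\gamma\in L^q$ (and the $L^q$ extension in Remark~4.5 requires $u\equiv0$ on the flat boundary, which fails here). This is not a genuine gap, however, since in both places the trivial bound $|u(x)|\le M\le1$ suffices (exactly as the paper does for its base case $k=0$ and in the outer-region estimate~(\ref{trick-to-extend-outside})); you simply absorb the extra factor $\mu_*^{-k_0}$ into $K$, which only affects the explicit exponent of $\|\gamma\|_{L^q}$ in the final constant.
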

\begin{remark} In the previous Theorem, in the case $q=n$, we have that $\vartheta(t)=1+\int_{0}^{t}\omega(s)/sds$ which is not a modulus of continuity since $\vartheta(0)\geq1.$ In particular, this would not even imply the differentiability of $u$ at the origin on the boundary.
\end{remark}
\begin{remark}\label{specific-form-F1} In fact, in the Theorem \ref{pointwise-boundary-krylov-general-boundary-data} above, $F_{1}$ can be taken as
$$F_{1}=J_{1}\cdot\bigg(1+||\gamma||_{L^{q}(B_{1}^{+})}^{\frac{2+\beta_{*}}{1-{n}/{q}}}\bigg), \quad J_{1}=J_{1}(n,q, \lambda, \Lambda, \alpha_{00}, \delta_{\omega}^{*}, \beta_{*},Q,  \int_{0}^{1}\omega(s)s^{-1}ds).$$
It follows from the proof below that the dependence of $J_{1}$ (and thus of $F_{1}$) on $\int_{0}^{1}\omega(s)s^{-1}ds$ is monotonically increasing.
\end{remark}
\begin{proof} Let us start by the uniqueness of $\Psi_{0}$. Suppose $\overline{\Psi}_{0}$ is another number satisfying (\ref{eq-1-krylov-general-0-tangent-plane}). This way, for all $x\in B_{1}^{+}$
$$ \left\vert \Psi_0 \cdot x_n - \overline{\Psi}_{0}\cdot x_{n} \right\vert \leq 2F_{1}\left( {\Vert u \Vert}_{L^{\infty}(B_1^+)} +[\varphi]_{C^{1, \omega}}(0) + {\Vert f \Vert}_{L^{q}(B_1^+)} \right)  {\vert x \vert}{\vartheta}(|x|),
$$
Taking $x=te_{n}$ for $t>0$  in the inequality above and letting $t\to 0^{+}$ we readily obtain $\Psi_{0}=\overline{\Psi}_{0}.$\\

\noindent We observe from the statement that $0<\delta_{\omega}^{*}\leq \delta_{\omega}=1.$ Also, since $n>q$ we have $\beta_{*}>0$.\\

\noindent  We divide the proof in two parts. \\

\noindent \underline{\bf Part I:} Here we do the following claim.\\

\noindent \underline{\bf Claim:} There exists a universal constant $\gamma_{0}>0$ such that Theorem \ref{pointwise-boundary-krylov-general-boundary-data} holds if  $||\gamma||_{L^{q}(B_{1}^{+})} \leq \gamma_{0}$. Precisely, 
 $$F_{1}=F_{1}(n,q, \lambda, \Lambda, \alpha_{00}, Q, \beta_{*}, \delta_{\omega}^{*}, \int_{0}^{1}\omega(s)s^{-1}ds).$$ 

\noindent\underline{Proof of the Claim:}\
\noindent  According to Proposition \ref{improvement-of-flatness}, once $0<\beta_{*}<\alpha_{00}$ is fixed  we choose 
\begin{equation}\label{choice-of-scale}
\mu_{*} := \min \Bigg\{\delta_{\omega}^{*}, \frac{1}{e},  \left( \frac{1}{8F_{0}} \right)^{\frac{1}{\alpha_{00} - \beta_{*}}} \Bigg\}<  \min \Bigg\{\frac{2}{3},   \left( \frac{1}{8F_{0}} \right)^{\frac{1}{\alpha_{00} - \beta_{*}}} \Bigg\} = \mu_{\beta_{*}},
\end{equation}
and set 
\begin{equation}\label{N_{0}}
N_{0}:=2Q\bigg(\frac{1}{\beta_{*}} + \int_{0}^{1}\frac{\omega(t)}{t}dt\bigg).
\end{equation}
Here 
\begin{equation}\label{E1-dependent-gamma-zero}
F_{0}= F_{0}(n, q, \lambda, \Lambda) \quad \textnormal { given in the statement of Proposition \ref{improvement-of-flatness}}.
 \end{equation}
Observe that $\beta_{*}\in(0,1).$ After the choice of $\mu_{*}, $ the number $\varrho_{0}=\varrho_{0}(\beta_{*}, \mu_{*}) > 0$ given in Proposition \ref{improvement-of-flatness} is completely determined and universal.  Now, we set
\begin{equation}\label{definition-of-n}
N := \bigg({\Vert u \Vert}_{L^{\infty}(B_1^+)} + \frac{1}{\varrho_{0}} \Big( [\varphi]_{C^{1, \omega}}(0) + 2{\Vert f \Vert}_{L^{q}(B_1^+)} \Big)\bigg)
\end{equation}
and introduce
\begin{equation}\label{auxillary-modulus}
\omega_{0}(t):=t^{\beta_{*}} + \omega(t), \quad\forall t\in[0,1]. 
\end{equation}
Taking into consideration Lemma \ref{properties-modulus-of-continuity} item $i)$ (recall $Q\geq 1$), direct computation shows that
\begin{equation}\label{omega-zero-DMC-Q-beta_{*}}
\omega_{0}\in\mathcal{DMC}(Q,\beta^{*}) \quad \textnormal { with } \quad \delta_{\omega_{0}}^{*}=\delta_{\omega}^{*}.
\end{equation}
\begin{equation}\label{vartheta-dominates-integral-omega-zero}
\int_{0}^{t}\frac{\omega_{0}(s)}{s}ds\leq \bigg(1+\frac{1}{\beta_{*}}\bigg)\vartheta(t) \quad \forall t\in [0,1],
\end{equation}
\begin{equation}\label{omega-zero-dominated-vartheta}
\omega_{0}(t) \leq t^{\beta_{*}} + Q\int_{0}^{t}\frac{\omega(s)}{s}ds \leq Q\vartheta(t) \quad \forall t\in [0,1].
\end{equation}
 We can assume without lost of generality that $k_{0}=0$ in the definition of $\mathcal{DMC}(Q,\beta_{*})$. Our goal is to prove, by an inductive process,  that there exist a sequence of real numbers $\{A_k\}_{k \geq 0}$ such that for all $k\geq 0$ we have
\begin{equation}\label{osc seq real numbers}
\big|\big| u- A_{k} \cdot x_n \big|\big|_{L^{\infty}(B_{\mu_{*}^{k}}^{+})} \leq N\mu_{*}^{k}\omega_{0}(\mu_{*}^{k}),
\end{equation}
\begin{equation}\label{modulus real numbers}
\vert A_{k + 1} - A_{k} \vert \leq 2 F_{0} N \omega_{0}(\mu_{*}^{k}) \quad \textnormal {where }\:\: F_{0} \:\: \textnormal { is given by (\ref{E1-dependent-gamma-zero}).}
\end{equation}
Putting $A_0 = 0$ we conclude that (\ref{osc seq real numbers}) holds for $k = 0$. Now, we assume that for all $0 \leq j \leq k$ the conditions (\ref{osc seq real numbers}) and (\ref{modulus real numbers}) hold. We need to show that there exists a real number $A_{k + 1}$ such that (\ref{osc seq real numbers}) and (\ref{modulus real numbers}) hold for $j=k + 1$. In order to do that we define
\begin{equation} \label{main sequence}
u_k (x) := \frac{1}{2 N \mu_{*}^{k}\omega_{0}(\mu_{*}^{k})} \Big( u(\mu_{*}^{k} x) - A_k \mu_{*}^{k} \cdot x_n \Big) \quad \textnormal { for } x\in B_{1}^{+}.
\end{equation}
Remark \ref{perturbation-by-linear} together with Remark \ref{scaling-remark} imply that $u_k \in S^*(\gamma_{k}, f_{k})$ in  $B_1^{+}$ where
$$\gamma_{k}(x):=\mu_{*}^{k}\gamma(\mu_{*}^{k}x) \quad \textnormal { for } x\in B_{1}^{+}, $$
and 
$$f_k(x) := \frac{\mu_{*}^{k}}{2 N \omega_{0}(\mu_{*}^{k})}\Big(\vert f(\mu_{*}^{k} x) \vert + \gamma(\mu_{*}^{k}x) \vert A_k \vert \Big) \quad \textnormal { for } x\in B_{1}^{+}.$$
Also, 
$$ \varphi_{k}(x) = \frac{\varphi(\mu_{*}^{k} x)}{2 N \mu_{*}^{k}\omega_{0}(\mu_{*}^{k})} \quad \textnormal { for } \quad x\in B'_{1}.$$ 
From the hypothesis of induction in (\ref{osc seq real numbers}), we obtain that  ${\Vert u_k \Vert}_{L^{\infty}(B_{1}^{+})} \leq 1$.\\

\noindent Now, in order to apply the improvement of flatness, we need to estimate the following quantity $$M_k := ||\gamma_{k}||_{L^{q}(B_{1}^{+})} + {\Vert f_k \Vert}_{L^{q}(B_{1}^{+})} + {\Vert \varphi_k \Vert}_{L^{\infty}(B'_1)}.$$ 

\noindent From the definition of $N$ and $\omega_{0}$ in (\ref{definition-of-n}) and (\ref{auxillary-modulus}), we have for all $x\in B'_{1}$
\begin{eqnarray*}
 |\varphi_{k}(x)| = \frac{|\varphi(\mu_{*}^{k} x)|}{2 N \mu_{*}^{k}\omega_{0}(\mu_{*}^{k})}  &\leq &  \frac{1}{2N} \frac{|\varphi(\mu_{*}^{k} x)|}{\mu_{*}^{k}\omega(\mu_{*}^{k})} \\
 & \leq & \frac{1}{2N} \frac{|\varphi(\mu_{*}^{k} x)||x|}{(\mu_{*}^{k}|x|)\omega(\mu_{*}^{k}|x|)} \quad( \omega \textnormal { is non-decreasing})\\
 & \leq & \frac{1}{2N}\Big([\varphi]_{C^{1, \omega}}(0)\Big) |x| \\
 & \leq & \frac{[\varphi]_{C^{1, \omega}}(0)}{2N}\\
 &\leq& \frac{\varrho_{0}}{2}.
\end{eqnarray*}
Thus, 
\begin{equation}\label{control-phi-k}
{\Vert \varphi_k \Vert}_{L^{\infty}(B'_1)} \leq \varrho_{0}/2.
\end{equation}
 Also, since $q>n$

\begin{equation}\label{control-gamma-k}
||\gamma_{k}||_{L^{q}(B_{1}^{+})} \leq  (\mu_{*}^{k})^{1-\frac{n}{q}}||\gamma||_{L^{q}(B_{1}^{+})} \leq ||\gamma||_{L^{q}(B_{1}^{+})}.
\end{equation}

\noindent Moreover, since $\omega_{0}(t)\geq t^{\beta_{*}}$ for all $t\in [0,1]$,  we have 

$$ |f_{k}(x) | \leq \frac{\mu_{*}^{k}}{2 N (\mu_{*}^{k})^{\beta_{*}}}\Big(\vert f(\mu_{*}^{k} x) \vert + \gamma(\mu_{*}^{k}x) \vert A_k \vert \Big) \quad \textnormal { for } x\in B_{1}^{+}.$$ 
Also, we observe that  by Lemma \ref{properties-modulus-of-continuity} item $ii)$ and by the choice made in (\ref{choice-of-scale}) $(\mu_{*}\in (0,\delta_{\omega}^{*}))$
\begin{equation}\label{N0-control-dini}
N_{0} = 2Q\int_{0}^{1}\frac{\omega_{0}(s)}{s}ds\geq  \sum_{j = 0}^{\infty} \omega_{0}(\mu_{*}^{j}).
\end{equation}
Since $\beta_{*}\leq 1 - n/q $ ~then $(\mu_{*}^{k})^{1-\beta_{*}-\frac{n}{q}} \leq 1$  for any $k\geq 0$ and thus we find 
\begin{eqnarray}
||f_{k}||_{L^{q}(B_{1}^{+})} & \leq & (\mu_{*}^{k})^{1 - \beta_{*} - \frac{n}{q}}\Bigg(\frac{{\Vert f \Vert}_{L^{q}(B_{1}^{+})}}{2N}   +  \frac{||\gamma||_{L^{q}(B_{1}^{+})}}{2N}\cdot |A_{k}|\Bigg) \nonumber\\
& \leq & \frac{\varrho_{0}}{4} + \frac{||\gamma||_{L^{q}(B_{1}^{+})}}{2 N} \cdot \sum_{j = 0}^{k - 1} \vert A_{j + 1} - A_j \vert \nonumber\\
&\leq & \frac{\varrho_{0}}{4} +   \Big( F_{0} \cdot||\gamma||_{L^{q}(B_{1}^{+})}\Big) \cdot \sum_{j = 0}^{\infty} \omega_{0}(\mu_{*}^{j})  \ (\textnormal {by the hypothesis of induction in} \ (\ref{modulus real numbers})) \nonumber\\
& \leq & \frac{\varrho_{0}}{4} + N_{0}F_{0} ||\gamma||_{L^{q}(B_{1}^{+})}\label{control-f-k}
\end{eqnarray}
Adding up (\ref{control-phi-k}), (\ref{control-gamma-k}) and (\ref{control-f-k}) we obtain 
\begin{equation}
M_k \leq \frac{3}{4} \varrho_{0} + \Big(1 +N_{0}F_{0}\Big) \cdot ||\gamma||_{L^{q}(B_{1}^{+})}
\end{equation}
Setting 
$${\gamma_0} := \frac{\varrho_{0}}{4(N_{0}F_{0} + 1)},$$ 
\noindent we arrive at 
$$  ||\gamma||_{L^{q}(B_{1}^{+})}\leq\gamma_{0} \Longrightarrow M_k \leq \varrho_{0}.$$
\noindent Now by the improvement of flatness, Proposition \ref{improvement-of-flatness}, there exists a constant $G_{0}$ such that $\vert G_{0} \vert \leq F_{0}$ and
\begin{equation} \label{modulus-ineq}
\big|\big| u_{k} - G_0 \cdot x_n \big|\big|_{L^{\infty}(B_{\mu_{*}}^{+})} \leq \frac{1}{4} \mu_{*}^{1 + \beta_{*}}.
\end{equation}
We define, 
\begin{equation} \label{Ak+1 definition}
A_{k + 1} := A_k + 2N\omega_{0}(\mu_{*}^{k}) G_{0}.
\end{equation}
Clearly, (\ref{modulus real numbers}) holds for $A_{k + 1}.$  By (\ref{Ak+1 definition}) and (\ref{main sequence}),  we have the following identities for $x\in B_{\mu_{*}^{k}},$
$$u(x) =  2 N \mu_{*}^{k}\omega_{0}(\mu_{*}^{k})u_k(x/\mu_{*}^{k}) +A_k \cdot x_n  $$

\noindent So, 
\begin{eqnarray*}
u(x) -A_{k+1}\cdot x_{n}  & = & 2 N \mu_{*}^{k}\omega_{0}(\mu_{*}^{k})u_k(x/\mu_{*}^{k}) - 2N\omega_{0}(\mu_{*}^{k})G_{0}\cdot x_{n} \\\\
&=& 2 N \mu_{*}^{k}\omega_{0}(\mu_{*}^{k})u_k(x/\mu_{*}^{k})- 2N\mu_{*}^{k}\omega_{0}(\mu_{*}^{k})G_{0}\cdot \bigg(\frac{x_{n}}{\mu_{*}^{k}}\bigg)\\\\
& = & 2 N \mu_{*}^{k}\omega_{0}(\mu_{*}^{k})\Bigg( u_k(x/\mu_{*}^{k}) -G_{0}\cdot \bigg(\frac{x_{n}}{\mu_{*}^{k}}\bigg)\Bigg)
\end{eqnarray*}
\noindent Thus,  by (\ref{modulus-ineq}) and  since $\omega_{0}$ satisfies the $\beta_{*}$ compatibility condition by (\ref{omega-zero-DMC-Q-beta_{*}}), we find
\begin{eqnarray*}
 \big|\big| u - A_{k+1} \cdot x_n \big|\big|_{L^{\infty}(B_{\mu_{*}^{k+1}}^{+})} & \leq & 2 N \mu_{*}^{k}\omega_{0}(\mu_{*}^{k}) \Big(4^{-1} \mu_{*}^{1 + \beta_{*}}\Big)  \\
 &= &  \frac{N}{2}\mu_{*}^{k+1}\omega_{0}(\mu_{*}^{k})\mu_{*}^{\beta} \\
 & \leq & N \mu_{*}^{k+1}\omega_{0}(\mu_{*}^{k+1}) \ (\textnormal {since by the choice done in } (\ref{choice-of-scale}) ~~\textnormal { we have } \ \mu_{*}\in (0,\delta_{\omega}^{*})).
\end{eqnarray*} 
This finishes the inductive construction and (\ref{osc seq real numbers}) and (\ref{modulus real numbers}) hold for all $k\in\mathbb{N}.$\\

\noindent Now,  
\begin{eqnarray}
|A_{m+k}  - A_{k}| &\leq& \sum _{j=k}^{m+k-1} |A_{j+1}-A_{j}| \label{convergence-of-As}\\
& \leq & 2F_{0}N\sum_{j = k}^{\infty} \omega_{0}(\mu_{*}^{j}) \quad (\textnormal {by } (\ref{modulus real numbers})) \nonumber\\
&\leq & 4QF_{0}N \int_{0}^{\mu_{*}^{k}}\frac{\omega_{0}(t)}{t}dt \quad (\textnormal {by Lemma} \  \ref{properties-modulus-of-continuity}\ \textnormal  {item}  \ ii))\nonumber\\
& \leq & 4QF_{0}N\bigg(1+\frac{1}{\beta_{*}}\bigg)\vartheta(\mu_{*}^{k})\quad \textnormal { by } (\ref{vartheta-dominates-integral-omega-zero}). \nonumber
\end{eqnarray}
\noindent By the chain of inequalities above, we see that the Dini continuity of $\omega_{0}$ implies that $\{A_{k}\}_{k\geq 0}$ is a Cauchy sequence. So, let $\widehat{\Psi_0} := \lim_{k \rightarrow \infty} A_k$. Thus, from (\ref{osc seq real numbers}) and also from the second and the last inequality in the chain (\ref{convergence-of-As}) (passing the limit as $m\to\infty$) we obtain
\begin{eqnarray*}
\big|\big| u - \widehat{\Psi_0} \cdot x_n \big|\big|_{L^{\infty}(B_{\mu_{*}^{k}}^{+})}& \leq & \big|\big| u - A_{k} \cdot x_n \big|\big|_{L^{\infty}(B_{\mu_{*}^{k}}^{+})} + {\mu}_{*}^k \vert A_k - \widehat{\Psi_{0}} \vert \\
& \leq &   N\mu_{*}^{k}\omega_{0}(\mu_{*}^{k}) + {\mu}_{*}^k\Big|\sum\limits_{j=k}^{\infty}(A_{j+1}-A_{j})\Big| \\
& \leq &QN {\mu}_{*}^{k}\vartheta(\mu_{*}^{k}) +4QF_{0}N\bigg(1+\frac{1}{\beta_{*}}\bigg)\mu_{*}^{k}\vartheta(\mu_{k}^{*}) \quad \textnormal { by } (\ref{omega-zero-dominated-vartheta})\\
& \leq & M_{0}N{\mu}_{*}^{k}\vartheta(\mu_{*}^{k}), \quad M_{0}:=Q+4QF_{0}\bigg(1+\frac{1}{\beta_{*}}\bigg).
\end{eqnarray*}

\noindent We also observe from the the chain of inequalities in (\ref{convergence-of-As}) by letting $m\to\infty$ and taking $k=0$ that
$$|\widehat{\Psi_{0}}| = \Big|\sum\limits_{j=0}^{\infty}(A_{j+1}-A_{j})\Big| \leq  4QF_{0}N\bigg(1+\frac{1}{\beta_{*}}\bigg)\vartheta(1) = 4QF_{0}\bigg(1+\frac{1}{\beta_{*}}\bigg)\Bigg(1 + \int_{0}^{1}\frac{\omega(s)}{s}ds\Bigg)N.$$
Since $\vartheta(t)=\int_{0}^{t} {\omega^{*}(s)}{s}^{-1}ds$ where $\omega^{*}(s)=\beta^{*}s^{\beta_{*}} + \omega(s)$ and $\omega^{*}$ satisfies the $Q-$decreasing quotient property in Definition \ref{definition-MC} $($recall $\beta_{*}\in(0,1))$, we have by  Lemma \ref{properties-modulus-of-continuity} item $i)$ applied to $\omega^{*}$  that
\begin{equation}\label{final-solving-passing-scale}
\vartheta(\mu_{*}^{k}) \leq \frac{2Q^{2}}{\mu_{*}}\vartheta(\mu_{*}^{k+1}) \quad \forall k\in\mathbb{N}.
\end{equation}
\noindent Now, for $x\in B_{1}^{+}$, we can choose $k\in \mathbb{N}$ so that $\mu_{*}^{k+1}\leq |x| <\mu_{*}^{k}.$ Thus,

\begin{eqnarray*}
\big|u(x) - \widehat{\Psi_{0}}\cdot x_{n} \big| &\leq& \big|\big| u - \widehat{\Psi_0} \cdot x_n \big|\big|_{L^{\infty}(B_{\mu_{*}^{k}}^{+})}\\
& \leq & M_{0}N{\mu}_{*}^{k}\vartheta(\mu_{*}^{k})\\
& = &M_{0}N{\mu}_{*}^{k+1}\frac{2Q^{2}}{\mu_{*}^{2}}\vartheta(\mu_{*}^{k+1}) \quad \textnormal { by } ~~(\ref{final-solving-passing-scale})\\
&\leq&\Bigg(\frac{2M_{0}\cdot Q^{2}}{\mu_{*}^{2}}\Bigg)\cdot N\cdot |x|\vartheta(|x|)
\end{eqnarray*}

\noindent This way,  since 

$$ N\leq \bigg(1 + 2\varrho_{0}^{-1} \bigg)\bigg({\Vert u \Vert}_{L^{\infty}(B_1^+)}  +[\varphi]_{C^{1, \omega}}(0) +{\Vert f \Vert}_{L^{q}(B_1^+)}\bigg)$$ 
the claim is now proven with 
\begin{equation}\label{F1-star}
F_{1}^{*}:= \bigg(1 + 2\varrho_{0}^{-1}\bigg)\Bigg(4QF_{0}\bigg(1+\frac{1}{\beta_{*}}\bigg)\Bigg(1 + \int_{0}^{1}\frac{\omega(s)}{s}ds\Bigg)+\Bigg(\frac{2M_{0}\cdot Q^{2}}{\mu_{*}^{2}}\Bigg)\Bigg),
\end{equation}
replacing $F_{1}$ in the statement of the Theorem. This concludes the proof if the claim and Part I of the proof. \\

\noindent \underline{\bf Part II:} Suppose $||\gamma |_{L^{q}(B_{1}^{+})}>{\gamma_{0}}.$\\

\noindent In this case, we use a scaling argument. Let us set  
\begin{equation}\label{gamma-is-big}
r_{0}:=\Bigg(\frac{\gamma_{0}}{||\gamma||_{L^{q}(B_{1}^{+})}}\Bigg)^{\frac{1}{1-{n}/{q}}} <1.
\end{equation}
\noindent We define 
$$v(x) := u \left(r_{0} x \right), \ \ \ \ x \in B_{1}^+.$$
We denote $\varphi_{v}:=v_{\mid_{B'_1}}.$ In this case, by Remark \ref{scaling-remark},  $v \in S^* \left( \gamma_{00}, f_{00} \right)$ in $B_1^+$, where 
$$\gamma_{00}(x):= r_{0}\gamma(r_{0}x), \quad f_{0}(x) := r_{0}^{2} f \left(r_{0} x \right) \quad \textnormal {  for  } \quad x\in B_{1}^{+}.$$

\noindent  Also, by Remark \ref{scaling-pointwise-dini}, $\varphi_{v}$ is $C^{1,\omega}$ at $0$ and 
$$[\varphi_{v}]_{C^{1,\omega}}(0)\leq [\varphi]_{C^{1,\omega}}(0).$$
Moreover, 
$$||\gamma_{00}||_{L^{q}(B_{1}^{+})} \leq r_{0}^{1-n/q}||\gamma||_{L^{q}(B_{1}^{+})} \leq \gamma_{0},$$
$${\Vert v \Vert}_{L^{\infty}(B_1^+)} \leq {\Vert u \Vert}_{L^{\infty}(B_{1}^+)}, \quad {\Vert f_{00} \Vert}_{L^{q}(B_1^+)}\leq r_{0}^{2-n/q}{\Vert f \Vert}_{L^{q}(B_1^+)}\leq {\Vert f \Vert}_{L^{q}(B_1^+)}.  $$
By the previous claim,  there exists $\widehat{\Psi_{0}}$ such that  for all $x \in B_1^+$,
\begin{equation}
\left\vert v(x) - \widehat{\Psi_0} \cdot x_n \right\vert \leq F_{1}^{*} \left( {\Vert v \Vert}_{L^{\infty}(B_1^+)} +[\varphi_{v}]_{C^{1, \omega}}(0) + {\Vert f_{00} \Vert}_{L^{q}(B_1^+)} \right)  {\vert x \vert}\vartheta(|x|),
\end{equation}
\begin{equation}
\vert \widehat{\Psi_0} \vert \leq F_{1}^{*} \left( {\Vert v \Vert}_{L^{\infty}(B_1^+)} + [\varphi_{v}]_{C^{1, \omega}}(0) + {\Vert f_{00} \Vert}_{L^{q}(B_1^+)} \right),
\end{equation}
where $F_{1}^{*}=F_{1}^{*}(n,q, \lambda, \Lambda, \alpha_{00}, \delta_{\omega}^{*}, \beta_{*}, Q, \int_{0}^{1}\omega(s)s^{-1}ds).$ Translating this back in terms of $u$ we find 

\begin{equation}\label{taylor-pointwise-krylov}
 \left\vert u(x) - \widetilde{\Psi_0} \cdot x_n \right\vert \leq \widetilde{F_{1}} \left( {\Vert u \Vert}_{L^{\infty}(B_1^+)} + [\varphi]_{C^{1, \omega}}(0)+ {\Vert f \Vert}_{L^{q}(B_1^+)} \right)  {\vert x \vert}\cdot\vartheta \Big(r_{0}^{-1}|x|\Big) \quad \textnormal { in } \quad B_{r_{0}}^{+}
\end{equation}
where 
\begin{equation}\label{new-gradient-estimate}
|\widetilde{\Psi_{0}}| \leq \widetilde{F_{1}}\left( {\Vert u \Vert}_{L^{\infty}(B_1^+)} +[\varphi]_{C^{1, \omega}}(0) + {\Vert f \Vert}_{L^{q}(B_1^+)} \right) , \quad \widetilde{F_{1}}:=r_{0}^{-1}F_{1}^{*}.
\end{equation}

\noindent Clearly, from  Lemma \ref{properties-modulus-of-continuity} item $i)$ since $\Theta=r_{0}^{-1}>1$ and again $\vartheta(t)=\int_{0}^{t} \frac{\omega^{*}(s)}{s}ds$ where  $\omega^{*}(s)=\beta^{*}s^{\beta_{*}} + \omega(s)$ and $\omega^{*}$ satisfies the $Q-$decreasing quotient property in Definition \ref{definition-MC}, we have

$$ \vartheta\Big(r_{0}^{-1}|x|\Big) \leq 2Q^{2}r_{0}^{-1}\vartheta(|x|) \quad \textnormal { for all } \ |x|\leq r_{0}.$$

\noindent So, setting $\widehat{F_{1}}=2Q^{2}r_{0}^{-1}\widetilde{F_{1}}=2Q^{2}r_{0}^{-2}F_{1}^{*},$ (\ref{taylor-pointwise-krylov}) becomes, 
\begin{equation}\label{taylor-pointwise-krylov-new}
 \left\vert u(x) - \widetilde{\Psi_0} \cdot x_n \right\vert \leq \widehat{F_{1}} \left( {\Vert u \Vert}_{L^{\infty}(B_1^+)} + [\varphi]_{C^{1, \omega}}(0)+ {\Vert f \Vert}_{L^{q}(B_1^+)} \right)  {\vert x \vert}\cdot\vartheta (|x|) \quad \textnormal { in } \quad B_{r_{0}}^{+}.
 \end{equation}

\noindent Finally to treat points  $x$ in $B_{1}^+ \setminus B_{r_{0}}^+$, we set 

$$M:= \left( {\Vert u \Vert}_{L^{\infty}(B_1^+)} +[\varphi]_{C^{1, \omega}}(0) + {\Vert f \Vert}_{L^{q}(B_1^+)} \right).$$

\noindent This way, we estimate (since $\vartheta(t) \geq t^{\beta_{*}} \ \forall t\in[0,1])$
\begin{eqnarray}
\left\vert u(x) - \widetilde{\Psi_0} \cdot x_n \right\vert &\leq & \left( {\Vert u \Vert}_{L^{\infty}(B_1^+)}  + | \widetilde{\Psi_0}|\right) \nonumber\\
&\leq& (\widetilde{F_{1}}+1)\cdot M\nonumber\\
&= & (\widetilde{F_{1}}+1)\cdot r_{0}\cdot r_{0}^{-1} \cdot \vartheta\big(r_{0}\big)\cdot \big(\vartheta\big(r_{0}\big)\big)^{-1}\cdot M\nonumber\\
&\leq & (\widetilde{F_{1}}+1) \cdot  r_{0}^{-1}\cdot \big(\vartheta\big(r_{0}\big)\big)^{-1}\cdot M\cdot |x|\vartheta (|x|) \nonumber\\
&\leq &  (\widetilde{F_{1}}+1) r_{0}^{-(1+\beta_{*})}.\label{trick-to-extend-outside}
\end{eqnarray}
\noindent Now, adding up the estimates (\ref{F1-star}), (\ref{new-gradient-estimate}), (\ref{taylor-pointwise-krylov-new}) and (\ref{trick-to-extend-outside}), the Theorem is proven for 
\begin{eqnarray*}
F_{1}:=F_{1}^{*}+\widetilde{F_{1}}+\widehat{F_{1}} + (\widetilde{F_{1}}+1)r_{0}^{-(1+\beta_{*})} &= &F_{1}^{*} + r_{0}^{-(2+\beta_{*})}\Big( F_{1}^{*} (2 + 2Q^{2})+1\Big) \\
&  = &  F_{1}^{**}\Bigg(1+ \Bigg(\frac{||\gamma||_{L^{q}(B_{1}^{+})}}{\gamma_{0}}\Bigg)^{\frac{2+\beta_{*}}{1-{n}/{q}}}\Bigg) \\
& \leq & F_{1}^{**}\Bigg( \Big(1+ (\gamma_{0})^{\frac{-(2+\beta_{*})}{1-{n}/{q}}}\Big)\bigg(1+||\gamma||_{L^{q}(B_{1}^{+})}^{\frac{2+\beta_{*}}{1-{n}/{q}}}\bigg)\Bigg)\\
& = & F_{1}^{***}\bigg(1+||\gamma||_{L^{q}(B_{1}^{+})}^{\frac{2+\beta_{*}}{1-{n}/{q}}}\bigg)
\end{eqnarray*}
where $F_{1}^{**}:= F_{1}^{*} (2 + 2Q^{2})+1$ and $F_{1}^{***}:=F_{1}^{**}\Big(1+ (\gamma_{0})^{\frac{-(2+\beta_{*})}{1-{n}/{q}}}\Big).$
\end{proof}

\section{Proof of Theorem \ref{general-pointwise-boundary-krylov-general-boundary-data}} \label{proof-main-result}

\begin{proof} Uniqueness can be proven as in the previous Theorem. In order to make the proof more transparent, we divide it in two steps. First, we assume $\delta_{\omega}=1.$ We then consider the function $v(x):=u(x) -L(x',0)$ for $x\in B_{1}^{+}.$ Let $\varphi_{v}=v_{\mid_{B'_1}}.$ Clearly, $[\varphi_{v}]_{C^{1,\omega}}(0) \leq [\varphi]_{C^{1,\omega}}(0).$ Moreover, $v\in S^{*}(\gamma; \overline{f})$ in $B_{1}^{+}$ where $\overline{f} = |f| + \gamma\cdot |\nabla\varphi(0)|$ by Remark \ref{perturbation-by-linear}.\\

\noindent This way, by applying Theorem \ref{pointwise-boundary-krylov-general-boundary-data} to $v$, we obtain for all $|x|\leq 1$
\begin{equation}\label{eq-1-krylov-general-0-tangent-plane-XX}
\left\vert v(x) - \Psi_0 \cdot x_n \right\vert \leq F_{1}\left( {\Vert v \Vert}_{L^{\infty}(B_1^+)} +[\varphi_{v}]_{C^{1, \omega}}(0) + {\Vert \overline{f} \Vert}_{L^{q}(B_1^+)} \right)  {\vert x \vert}\vartheta(|x|),
\end{equation}
\begin{equation}\label{eq-2-krylov-general-0-tangent-plane-XX}
\left\vert \Psi_0 \right\vert \leq F_{1} \left( {\Vert v \Vert}_{L^{\infty}(B_1^+)} +[\varphi_{v}]_{C^{1, \omega}}(0) + {\Vert \overline{f} \Vert}_{L^{q}(B_1^+)} \right).
\end{equation}
where $F_{1}$ as in Theorem \ref{pointwise-boundary-krylov-general-boundary-data} above. Translating the estimates above back to $u$, the Theorem is proven with 
$$\overline{T_{0}}=\Big(1+||\gamma||_{L^{q}(B_{1}^{+})}\Big)\cdot F_{1}$$
 in the place of $T_{0}$. Now, we treat the general case $\delta_{\omega}\in[0,1]$ proceeding by scaling. Consider $v_{0}(x)=\delta_{\omega}^{-1}u(\delta_{\omega}x)$ for $x\in B_{1}^{+}.$ Now, $v_{0}\in C^{0}(\overline{B}_{1})\cap S^{*}(\gamma_{00}, f_{00})$ where $\gamma_{00}(x):=\delta_{\omega}\gamma(\delta_{\omega}x)$ and $f_{00}(x)=\delta_{\omega}f({\delta_{\omega}}x)$ for $x\in B_{1}^{+}.$ Setting $\overline{L}(x):=\delta_{\omega}^{-1}L(\delta_{\omega}x)$ for $x\in B'_{1}$ and  $\varphi_{0}:={v_{0}}_{\mid_{B'_1}}$ as the new boundary data, we have by Remark \ref{restriction-renormalization-MC} that $\varphi_{0}\in C^{1,\overline{\omega}}(0)$ and $[\varphi_{0}]_{C^{1,\overline{\omega}}}(0)\leq [\varphi]_{C^{1,{\omega}}}(0)$  where $\overline{\omega}(t)=\omega(\delta_{\omega}t)$ for $t\in[0,1].$  We recall that $\overline{\omega}\in\mathcal{DMC}(Q,\beta_{*})$ with $ \delta_{\overline{\omega}}=1, \delta_{\overline{\omega}}^{*}=\delta_{\omega}^{*}\in(0,1]$ and 
 \begin{equation}\label{preserving-dependence-dininess}
 \int_{0}^{\delta_{\omega}}\frac{w(s)}{s}ds=\int_{0}^{1}\frac{\overline{w}(s)}{s}ds.
 \end{equation}
 \noindent Furthermore, 
 $$||\gamma_{00}||_{L^{q}(B_{1}^{+})} \leq \delta_{\omega}^{1-n/q}||\gamma||_{L^{q}(B_{1}^{+})}, \quad ||f_{00}||_{L^{q}(B_{1}^{+})} \leq \delta_{\omega}^{1-n/q}||f||_{L^{q}(B_{1}^{+})}, \quad ||v_{0}||_{L^{\infty}(B_{1}^{+})} \leq \delta_{\omega}^{-1}||u||_{L^{\infty}(B_{1}^{+})}.$$
 We set
\begin{equation}\label{defining-vartheta-bar}
\overline{\vartheta}(t):= t^{\beta_{*}}+\int_{0}^{t}\frac{\overline{\omega}(s)}{s}ds = t^{\beta_{*}} + \int_{0}^{\delta_{\omega}t}\frac{\omega(s)}{s}ds \quad \textnormal { for  }\ t\in[0,1].
\end{equation} 

\noindent In particular since $\delta_{\omega}\leq 1,$

\begin{equation}\label{vartheta-bar-vartheta}
\overline{\vartheta}(\delta_{\omega}^{-1}t) = \delta_{\omega}^{-\beta_{*}} t^{\beta_{*}} + \int_{0}^{t}\frac{\omega(s)}{s}ds \leq \delta_{\omega}^{-\beta_{*}}\vartheta(t) \quad \forall t\in [0,\delta_{\omega}].
\end{equation}

 \noindent We can now apply the previous case to $v_{0}$ $($since $\delta_{\overline{\omega}}=1)$ to obtain $($once $\delta_{\omega}\in(0,1])$ for $|x|\leq \delta_{\omega}$
 \begin{eqnarray*}
\left\vert u(x) - L(x',0) - \Psi_0 \cdot x_n \right\vert &\leq& \overline{T_{0}}\delta_{\omega}^{-1}\left( {\Vert u \Vert}_{L^{\infty}(B_1^+)} +||\varphi||_{C^{1, \omega}}(0) +{\Vert {f} \Vert}_{L^{q}(B_1^+)} \right)  {\vert x \vert}\overline{\vartheta}(|x|\delta_{\omega}^{-1})\\\\
& \leq & \overline{T_{0}}\delta_{\omega}^{-1-\beta_{*}}\left( {\Vert u \Vert}_{L^{\infty}(B_1^+)} +||\varphi||_{C^{1, \omega}}(0) +{\Vert {f} \Vert}_{L^{q}(B_1^+)} \right)  {\vert x \vert}{\vartheta}(|x|) \end{eqnarray*}
{by } (\ref{vartheta-bar-vartheta}) and 

$$\left\vert \Psi_0 \right\vert \leq \delta_{\omega}^{-1}\overline{T_{0}} \left( {\Vert u \Vert}_{L^{\infty}(B_1^+)} +||\varphi||_{C^{1, \omega}}(0) + {\Vert {f} \Vert}_{L^{q}(B_1^+)} \right).$$
The proof is finished by observing (\ref{preserving-dependence-dininess}) and by taking 
$$T_{0}:=\delta_{\omega}^{-(1+\beta_{*})}\overline{T_{0}}=\delta_{\omega}^{-(1+\beta_{*})}\Big(1+||\gamma||_{L^{q}(B_{1}^{+})}\Big)\cdot F_{1}.$$
  \end{proof}

\section{ Proof of Corollary \ref{smooth-bdry-data}} \label{proof-corollary-smooth-data}
\begin{proof} Let us consider a generic point $x_{0}\in B'_{1/2}$. Define $\delta_{*}:=\min\{\delta_{\omega}, 1/2\}$ and $v(x):=\delta_{*}^{-1}\big(u(x_{0}+\delta_{*} x)\big)$ for $x\in B_{1}^{+}.$ Clearly, $v\in S^{*}(\overline{\gamma}; \overline{f})$ in $B_{1}^{+}$ where $\overline{\gamma}(x)=\delta_{*}\gamma(x_{0}+\delta_{*} x)$and $\overline{f}(x)=\delta_{*} f(x_{0}+\delta_{*} x)$  for $x\in B_{1}^{+}.$ Moreover, for $y\in H_{n-1}$, let 
$${L}_{x_{0}}(y)=\varphi(x_{0}) + \nabla\varphi(x_{0})\cdot (y-x_{0}) \quad (\textnormal {in our notation here} \quad \nabla\varphi(x',0) \in \mathbb{R}^{n-1}\times\{0\}) \ \textnormal {see Remark} \ \ref{taylor-on-the-fiber})$$ 
be the (classical) Taylor's polynomial of $\varphi$ at $x_{0}$ and $\varphi_{v}=v_{\mid_{B'_{1}}}.$ Setting, for $y\in H_{n-1}$
$$ \overline{L}_{0}(y):=\delta_{*}^{-1}{L}_{x_{0}}(x_{0}+\delta_{*} y),$$
we have for any $x\in B'_{1}$ by Remark \ref{classical-dini-taylor}
$$|\varphi_{v}(x) - \overline{L}_{0}(x)| = \delta_{*}^{-1}|\varphi(x_{0}+\delta_{*} x) - {L}_{x_{0}}(x_{0}+\delta_{*} x)| \leq[\nabla\varphi]_{C^{0,\omega}(B_{1})}|x|\overline{\omega}(|x|)$$
\noindent where $\overline{\omega}(t):=\omega(\delta_{*} t)$ for $t\in[0,1]$. From, the estimate above, we obtain 
$$[\varphi_{v}]_{C^{1,\overline{\omega}}}(0)\leq [\nabla\varphi]_{C^{0,\omega}(B'_{1})}.$$
Again, we observe by Rermark \ref{restriction-renormalization-MC}  that $\overline{\omega}\in\mathcal{DMC}(Q,\beta_{*})$ with $ \delta_{\overline{\omega}}=1, \delta_{\overline{\omega}}^{*}=\delta_{\omega}^{*}\in(0,1].$ Moreover, 

\begin{equation}\label{dininess-monotonicity}
\int_{0}^{1}\frac{\overline{\omega}(s)}{s}ds = \int_{0}^{\delta_{*}}\frac{\omega(s)}{s}ds \leq  \int_{0}^{\delta_{\omega}}\frac{\omega(s)}{s}ds 
\end{equation}
  \noindent  This way, applying Theorem \ref{general-pointwise-boundary-krylov-general-boundary-data} and recalling definition of $\overline{\vartheta}$ in (\ref{defining-vartheta-bar})\footnote{with $\delta_{\omega}$ replaced by $\delta_{*}=\min\{\delta_{\omega}, 1/2\}.$},  we obtain for $\Psi_{0}=\Psi_{0}(x_{0})$
 
 \begin{equation}\label{taylor-shifted}
  \left\vert v(x) - \overline{L}_{0}(x',0) - \Psi_0(x_{0}) \cdot x_n \right\vert \leq {T_{0}}\overline{M} \cdot {\vert x \vert}\overline{\vartheta}(|x|) \quad \forall x\in B_{1}^{+}, 
\end{equation}
$$  \left\vert \Psi_0(x_{0}) \right\vert \leq  T_{0}\overline{M},
$$ 
$$ \overline{M} := \delta_{*}^{-1}\left( {\Vert u\Vert}_{L^{\infty}(B_{1}^+)} + {\Vert {f} \Vert}_{L^{q}(B_{1}^+)} + ||\varphi||_{C^{1, \omega}(B'_{1})}  \right) =: \delta_{*}^{-1}\cdot N.$$
as before.\\

\noindent Setting $$A(x_{0}):= \nabla\varphi(x_{0})+ \Psi_{0}(x_{0})\cdot e_{n} \quad \textnormal { and } $$ 
\begin{equation}\label{taylor-to-match-appendix}
P_{x_{0}}(x)= \varphi(x_{0}) +  A(x_{0})\cdot (x-x_{0}).
\end{equation}
 \noindent  Rewriting (\ref{taylor-shifted}) in terms of $u$ and taking (\ref{vartheta-bar-vartheta}) into account, we obtain
\begin{equation}\label{final-taylox-each-point}
 \left\vert u(x) - P_{x_{0}}(x) \right\vert \leq T_{0}\delta_{*}^{-(1+\beta_{*})}N|x-x_{0}|\vartheta(|x-x_{0}|)\quad \forall x\in B_{\delta_{*}}^{+}(x_{0}), 
\end{equation}
Also, $\forall (x',0)\in B_{\delta_{*}}^{+}(x_{0}),$ we trivially have
\begin{equation}\label{tangent-part}
\left\vert u(x',0) - P_{x_{0}}(x',0) \right\vert = \left\vert \varphi(x',0) -L_{x_{0}}(x',0) \right\vert \\
 \leq  (T_{0}\delta_{*}^{-(1+\beta_{*})}+1)N|x-x_{0}|\vartheta(|x-x_{0}|).
 \end{equation}
 Moreover, 
\begin{equation}\label{gradient-estimate-final-taylor-each-point}
 |A(x_{0})| \leq |\Psi_{0}(x_{0})| + |\nabla \varphi(x_{0})| \leq  (T_{0}\delta_{*}^{-1}+1)N.
 \end{equation}
 Observe here that we can replace $\int_{0}^{1}{\overline{\omega}(s)}{s}^{-1}ds$ that appears embedded in $T_{0}$ by $\int_{0}^{\delta_{\omega}}{\omega(s)}{s}^{-1}ds$ due to (\ref{dininess-monotonicity}) and the monotonically increasing dependence of $T_{0}$ in that variable (Remark \ref{specific-form-F1} and Remark \ref{dependence-T0}). Now, we are exactly in the conditions of Lemma \ref{taylor-2} if we take 
 $$T:=(T_{0}\delta_{*}^{-(1+\beta_{*})} + T_{0}\delta_{*}^{-1}+2)N=T_{*}\cdot N \quad \textnormal { and }  \quad  r_{0}=\delta_{*}.$$
 
\noindent Inequalities (\ref{gradient-estimate-final-taylor-each-point}) and (\ref{holder-semi-norm-mixed-bdry}) imply

\begin{equation}\label{using-appendix-to-norm}
[A]_{C^{0,\omega}(\overline{B}'_{1/2})} \leq F\bigg(T +\frac{T}{\omega(\delta_{*}/4)}\bigg) = F\bigg(T_{*} +\frac{T_{*}}{\omega(\delta_{*}/4)}\bigg)\cdot N,
\end{equation}
where $F$ is a dimensional constant. The Theorem is proven by adding the constants in expressions (\ref{using-appendix-to-norm}) and (\ref{final-taylox-each-point})
$$ T_{1}:= F\bigg(T_{*} +\frac{T_{*}}{\omega(\delta_{*}/4)}\bigg) + T_{*}\leq T_{*}(F+1)\bigg(1+\frac{1}{\omega(\delta_{*}/4)}\bigg)=2(T_{0}\delta_{*}^{-(1+\beta_{*})} +1)(F+1)\bigg(1+\frac{1}{\omega(\delta_{*}/4)}\bigg).$$
\end{proof}

\section{Appendix: Uniform control on Taylor's expansion versus interior and boundary regularity}
In this Appendix, we present some estimates relating pointwise $C^{1,\omega}$ behavior and classical $C^{1,\omega}$ regularity in the interior and boundary case. These estimates are known (specially in the $C^{1,\alpha}$ case). However,  it is not so easy to find a reference for their proofs specially on the generality discussed here. We present the proofs in full details for completeness.

\begin{lemma}[{\bf $C^{1,\omega}-$ interior regularity by uniform control on Taylor's expansion}]\label{taylor-1} Let $u$ be defined in $B_{r}$ and  $\omega:[0,\delta_{\omega}]\to[0,\infty)$ a modulus of continuity. Moreover, let $r_{0}\leq \min\{r/2, \delta_{\omega} \}.$
Assume that for every $x_{0}\in \overline{B}_{r/2}$ there exists an affine function $P_{x_{0}}$ such that
\begin{equation}\label{uniform-polynomial-control} 
|u(x) - P_{x_{0}}(x)| \leq T |x-x_{0}|\omega(|x-x_{0}|) \quad\forall x\in B_{r} \ \ \textnormal {such that } \ |x-x_{0}|\leq r_{0}. 
\end{equation}
Then, $u\in C^{1,\omega}(\overline{B}_{r/2})$ with the following estimates 
\begin{equation}\label{holder-semi-norm-mixed}
[\nabla u]_{C^{0,\omega}(\overline{B}_{r/2})} \leq E\bigg(T + \frac{||\nabla u||_{L^{\infty}(\overline{B}_{r/2})}}{\omega(r_{0}/4)}\bigg).
\end{equation}
\begin{equation}\label{holder-semi-norm-pure}
[\nabla u]_{C^{0,\omega}(\overline{B}_{r_{0}/8})} \leq ET.
\end{equation}
where $E>0$ is a dimensional constant.
\end{lemma}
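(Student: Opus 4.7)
The plan is to exploit that the affine approximants $P_{x_0}$ and $P_{y_0}$ at two nearby base points must be close on the overlap region where both Taylor-type estimates apply; since their difference $L := P_{x_0} - P_{y_0}$ is affine with gradient exactly $\nabla u(x_0) - \nabla u(y_0)$, a quantitative bound for $|L|$ on a small ball converts at once into the desired H\"older semi-norm bound for $\nabla u$.

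First I would observe that the hypothesis (\ref{uniform-polynomial-control}), evaluated at $x = x_0$, forces $P_{x_0}(x_0) = u(x_0)$; dividing the inequality by $|x-x_0|$ and letting $x \to x_0$ (using $\omega(t)\to 0$) shows that $u$ is classically differentiable at every $x_0 \in \overline{B}_{r/2}$ with $\nabla u(x_0) = \nabla P_{x_0}$, so that $P_{x_0}(x) = u(x_0) + \nabla u(x_0)\cdot (x - x_0)$.

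The main step is the following. Fix $x_0, y_0 \in \overline{B}_{r/2}$ with $\rho := |x_0-y_0| \leq r_0$, set $m := (x_0+y_0)/2$, and let $D := \overline{B}_{\rho/2}(m)$. By the triangle inequality, every $x\in D$ satisfies $|x-x_0|, |x-y_0| \leq \rho \leq r_0$, and moreover $D \subset B_r$ since $m\in \overline{B}_{r/2}$ and $\rho/2 \leq r_0/2 \leq r/4$. Writing $L := P_{x_0}-P_{y_0}$ (an affine function with $\nabla L = \nabla u(x_0)-\nabla u(y_0)$) and splitting $L(x) = \bigl(u(x)-P_{x_0}(x)\bigr) - \bigl(u(x)-P_{y_0}(x)\bigr)$, the hypothesis and monotonicity of $\omega$ give
\[
|L(x)| \leq T|x-x_0|\omega(|x-x_0|) + T|x-y_0|\omega(|x-y_0|) \leq 2T\rho\,\omega(\rho), \qquad x\in D.
\]
Evaluating the affine $L$ at the antipodal pair $x^\pm := m \pm (\rho/2)\,\nabla L / |\nabla L| \in \partial D$ yields $\rho|\nabla L| = |L(x^+)-L(x^-)| \leq 2\sup_D |L|$, hence
\[
|\nabla u(x_0)-\nabla u(y_0)| \leq 4T\,\omega(\rho), \qquad \text{whenever } \rho \leq r_0.
\]

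Finally I would deduce the two displayed estimates. For (\ref{holder-semi-norm-pure}), any $x_0, y_0 \in \overline{B}_{r_0/8}$ satisfy $\rho \leq r_0/4 \leq r_0$, so the main step applies verbatim and gives $[\nabla u]_{C^{0,\omega}(\overline{B}_{r_0/8})} \leq 4T$. For (\ref{holder-semi-norm-mixed}) I would split: if $\rho \leq r_0/4$, the main step gives $|\nabla u(x_0)-\nabla u(y_0)| \leq 4T\,\omega(\rho)$; if $r_0/4 < \rho \leq \delta_\omega$, the trivial bound $|\nabla u(x_0)-\nabla u(y_0)| \leq 2\|\nabla u\|_{L^\infty(\overline{B}_{r/2})}$ together with monotonicity $\omega(\rho)\geq \omega(r_0/4)$ gives $|\nabla u(x_0)-\nabla u(y_0)| \leq \bigl(2\|\nabla u\|_{L^\infty(\overline{B}_{r/2})}/\omega(r_0/4)\bigr)\,\omega(\rho)$. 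Adding the two contributions yields (\ref{holder-semi-norm-mixed}) with a purely dimensional $E$. There is no real obstacle here; the only point requiring care is to verify at each stage that the test set $D$ lies inside $B_r$ and that all pairwise distances remain $\leq r_0 \leq \delta_\omega$, so that (\ref{uniform-polynomial-control}) is legitimately applicable.
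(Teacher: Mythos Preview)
Your proof is correct and follows essentially the same strategy as the paper's: both set up the midpoint ball $\overline{B}_{\rho/2}(m)$, bound the affine difference $P_{x_0}-P_{y_0}$ there via the two Taylor estimates, extract the gradient, and then split into the near/far cases with threshold $r_0/4$. The only minor variation is in how you read off $|\nabla L|$ from $\sup_D |L|$: the paper invokes the interior gradient estimate for harmonic functions (with an unspecified dimensional constant $\overline{C}$), whereas you evaluate $L$ at the antipodal pair $m\pm(\rho/2)\nabla L/|\nabla L|$, which is more elementary and yields the explicit constant $4$.
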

\begin{proof} Clearly, (\ref{uniform-polynomial-control}) implies that $u$ is differentiable at any point in $\overline{B}_{r/2}$. Suppose $x_{0},y_{0}\in \overline{B}_{r/2}$ are such that $d_{0}:=|x_{0}-y_{0}|> r_{0}/4.$ Then, 
\begin{equation}\label{holder-away}
 \frac{|\nabla u(x_{0})-\nabla u(y_{0})|}{\omega(|x_{0}-y_{0}|)}\leq \Bigg(\frac{2}{\omega(r_{0}/4)}\Bigg)||\nabla u||_{L^{\infty}(\overline{B}_{r/2})}.
 \end{equation}
Now, assume $x_{0},y_{0}\in \overline{B}_{r/2}$ with $2d_{0}=2|x_{0}-y_{0}|\leq r_{0}/2.$ Set $z_{0}:=(x_{0}+y_{0})/2 \in \overline{B}_{r/2}.$ Clearly, $|z_{0}-x_{0}|=|z_{0}-y_{0}|=d_{0}/2$. This way, $\overline{B}_{d_{0}/2}(z_{0})\subset \overline{B}_{d_{0}}(x_{0})\cap \overline{B}_{d_{0}}(y_{0})\subset B_{r}.$ By assumption (\ref{uniform-polynomial-control}) we have,
\begin{equation}\label{control-in-x_{0}}
||u - P_{x_{0}}||_{L^{\infty}(\overline{B}_{\frac{d_{0}}{2}}(z_{0}))} \leq  ||u - P_{x_{0}}||_{L^{\infty}(\overline{B}_{d_{0}}(x_{0}))}\leq T d_{0} \omega(d_{0}) 
\end{equation}
\begin{equation}\label{control-in-y_{0}}
||u - P_{y_{0}}||_{L^{\infty}(\overline{B}_{\frac{d_{0}}{2}}(z_{0}))} \leq  ||u - P_{y_{0}}||_{L^{\infty}(\overline{B}_{d_{0}}(y_{0}))}\leq T d_{0} \omega(d_{0})
\end{equation}
Now, recall that the affine function $P_{x_{0}}-P_{y_{0}}$ is harmonic. So, using gradient estimates, we have for a dimensional constant $\overline{C}>0$ that
\begin{eqnarray}\label{estimating-gradient-close-points}
\Big| \nabla{u}(x_{0}) - \nabla{u}(y_{0}) \Big| &\leq &\frac{2\overline{C}}{d_{0}}\Big|\Big|P_{x_{0}} - P_{y_{0}} \Big|\Big|_{L^{\infty}(\overline{B}_{d_{0}/2}(z_{0}))} \nonumber \\
 &\leq & \frac{2\overline{C}}{d_{0}} \Bigg( \Big|\Big| u - P_{x_{0}}\Big|\Big|_{L^{\infty}(\overline{B}_{d_{0}}(x_{0}))} +  \Big|\Big| u - P_{x_{0}}\Big|\Big|_{L^{\infty}(\overline{B}_{d_{0}}(y_{0}))} \Bigg) \nonumber \\
 & \leq & 4\overline{C}T\omega(d_{0}) =  4\overline{C}T\omega(|x_{0}-y_{0}|),
\end{eqnarray}
\noindent where we added (\ref{control-in-x_{0}}) and (\ref{control-in-y_{0}}) in the last inequality. The first estimate in the Lemma follows by adding (\ref{holder-away}) and (\ref{estimating-gradient-close-points}) by taking $E=4\overline{C}+2.$ The second estimate is a immediate consequence of the second part of the proof since for $x_{0},y_{0}\in \overline{B}_{r_{0}/8}$ we have $2d_{0}:=2|x_{0}-y_{0}|\leq r_{0}/2.$
\end{proof}

\begin{lemma}[{\bf $C^{1,\omega}-$boundary regularity by uniform control on Taylor's expansion}]\label{taylor-2}  Let $u$ be defined in $B_{r}$ and  $\omega:[0,\delta_{\omega}]\to[0,\infty)$ a modulus of continuity. Moreover, let $r_{0}\leq \min\{r/2, \delta_{\omega} \}.$  Suppose that for every $x_{0}\in \overline{B}'_{r/2}$ there exists an affine function $P_{x_{0}}$ such that for $x=(x',x_{n})$
\begin{equation}\label{uniform-polynomial-control-bdry} 
|u(x) - P_{x_{0}}(x)| \leq T |x-x_{0}|\omega(|x-x_{0}|) \quad \forall x\in B_{r}^{+} \ \textnormal { such that } \  |x-x_{0}|\leq r_{0}. 
\end{equation}
\begin{equation}\label{tangential-uniform-polynomial-control-bdry} 
|u(x',0) - P_{x_{0}}(x',0)| \leq T |x'-x'_{0}|\omega(|x'-x'_{0}|) \quad \forall (x',0)\in B'_{r} \ \textnormal { such that } \  |x'-x'_{0}|\leq r_{0}.
\end{equation}
Then, there is a unique function $A:B'_{r/2}\to\mathbb{R}^{n}$ such that 
\begin{equation}\label{expression-for-Px0}
P_{x_{0}} (x):= A(x_{0})(x-x_{0}) + u(x_{0}) \quad \forall x\in \mathbb{R}^{n}.
\end{equation}
Furthermore,  $A\in C^{0,\omega}(\overline{B}'_{r/2})$ and 
\begin{equation}\label{holder-semi-norm-mixed-bdry}
[A]_{C^{0,\omega}(\overline{B}'_{r/2})} \leq F\bigg(T +\frac{||A||_{L^{\infty}(\overline{B}'_{r/2})}}{\omega(r_{0}/4)}\bigg).
\end{equation}
\begin{equation}\label{holder-semi-norm-pure-bdry}
[A]_{C^{0,\omega}(\overline{B}'_{r_{0}/8})} \leq FT.
\end{equation}
where $F>0$ is a dimensional constant. The vector field $A$ can be thought as the gradient of $u$ along $B'_{r}$.
\end{lemma}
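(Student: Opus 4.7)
The plan is to follow the template of Lemma \ref{taylor-1} with one adjustment tailored to the boundary setting: rather than work in a concentric half-ball, I would lift a boundary midpoint slightly into the half-space to obtain a genuine interior ball on which the gradient of an affine function can be controlled by its $L^{\infty}$-norm in the standard way.

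First, evaluating the hypothesis (\ref{uniform-polynomial-control-bdry}) at $x = x_0$ forces $P_{x_0}(x_0) = u(x_0)$, since $\omega(0) = 0$. Because $P_{x_0}$ is affine, this yields the representation $P_{x_0}(x) = A(x_0)(x-x_0) + u(x_0)$ with $A(x_0) := \nabla P_{x_0} \in \mathbb{R}^n$ uniquely determined, which is exactly (\ref{expression-for-Px0}).

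For the $\omega$-continuity estimate, fix $x_0, y_0 \in \overline{B}'_{r/2}$, set $d_0 = |x_0 - y_0|$, and split into two cases. If $d_0 > r_0/4$, then by monotonicity of $\omega$ one has trivially
\[
\frac{|A(x_0) - A(y_0)|}{\omega(d_0)} \leq \frac{2\,\|A\|_{L^{\infty}(\overline{B}'_{r/2})}}{\omega(r_0/4)}.
\]
If $d_0 \leq r_0/4$, I would set $w_0 := \bigl((x'_0 + y'_0)/2, \, d_0/4\bigr)$. A short computation gives $|w_0 - x_0| = |w_0 - y_0| = d_0\sqrt{5}/4$, so $B_{d_0/4}(w_0) \subset B_{d_0}^+(x_0) \cap B_{d_0}^+(y_0) \subset B_r^+$, and this ball lies strictly inside the open upper half-space (its minimum $x_n$-coordinate is $0$). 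Applying (\ref{uniform-polynomial-control-bdry}) centered at both $x_0$ and $y_0$ on $B_{d_0/4}(w_0)$ and summing yields
\[
\|P_{x_0} - P_{y_0}\|_{L^{\infty}(B_{d_0/4}(w_0))} \leq 2 T\, d_0\, \omega(d_0).
\]
Since $P_{x_0} - P_{y_0}$ is affine with constant gradient $A(x_0) - A(y_0)$, the elementary estimate $|\nabla L| \leq \rho^{-1}\|L\|_{L^{\infty}(B_\rho(w))}$ for affine $L$ delivers $|A(x_0) - A(y_0)| \leq 8\, T\, \omega(d_0)$.

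Combining both cases and choosing $F$ to be a dimensional constant ($F = 8$ suffices) gives (\ref{holder-semi-norm-mixed-bdry}). The sharper estimate (\ref{holder-semi-norm-pure-bdry}) is then automatic: any two points in $\overline{B}'_{r_0/8}$ satisfy $d_0 \leq r_0/4$, so only the second case occurs and the $\|A\|_{L^{\infty}}/\omega(r_0/4)$ term drops out. The sole real obstacle is the geometric construction of the interior ball $B_{d_0/4}(w_0)$: lifting the boundary midpoint by exactly $d_0/4$ is the correct choice, as smaller lifts fail to clear the boundary while larger lifts fail to be contained in $B_{d_0}(x_0) \cap B_{d_0}(y_0)$. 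The tangential hypothesis (\ref{tangential-uniform-polynomial-control-bdry}) is in fact redundant for the argument above (it is a special case of (\ref{uniform-polynomial-control-bdry}) obtained by restricting to $\{x_n = 0\}$), but it appears naturally in applications such as Corollary \ref{smooth-bdry-data}, where the interior and tangential controls are produced separately and one ultimately takes the larger of the two constants for $T$.
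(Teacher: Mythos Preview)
Your argument is correct and is in fact cleaner than the paper's. The key difference is geometric: the paper stays on the boundary, taking the midpoint $z_0=(x_0+y_0)/2\in\overline{B}'_{r/2}$ and working with the \emph{half}-ball $\overline{B}^+_{d_0/2}(z_0)$. Because the gradient of a general affine function is not controlled by its sup-norm on a half-ball in the same way as on a full ball, the paper is forced to split $A=(A_T,A_n)$: the normal component $A_n$ is handled via the symmetry $\|\,\mu x_n\,\|_{L^\infty(B^+_\rho(a,0))}=\|\,\mu x_n\,\|_{L^\infty(B_\rho(a,0))}$, while the tangential component $A_T$ is obtained by restricting to $\{x_n=0\}$ and invoking the interior Lemma~\ref{taylor-1} in $\mathbb{R}^{n-1}$ (this is precisely where hypothesis (\ref{tangential-uniform-polynomial-control-bdry}) enters the paper's proof). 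Your lifted center $w_0=((x'_0+y'_0)/2,\,d_0/4)$ sidesteps this entirely: $B_{d_0/4}(w_0)$ is a genuine full ball inside $B^+_{d_0}(x_0)\cap B^+_{d_0}(y_0)$, so the elementary bound $|\nabla L|\le \rho^{-1}\|L\|_{L^\infty(B_\rho)}$ for affine $L$ applies directly to the whole vector $A(x_0)-A(y_0)$, with no splitting and no appeal to Lemma~\ref{taylor-1}. Your remark that (\ref{tangential-uniform-polynomial-control-bdry}) is redundant (being the restriction of (\ref{uniform-polynomial-control-bdry}) to $\{x_n=0\}$, recalling that $B_r^+$ includes the flat boundary in the paper's convention) is also correct; the paper keeps it as a separate hypothesis only because that is how the two estimates are produced in the application (see (\ref{final-taylox-each-point}) and (\ref{tangent-part})). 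What the paper's route buys is a slight structural separation of the tangential and normal regularity, but for the bare statement of the lemma your approach is shorter and yields the explicit constant $F=8$.
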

\begin{proof} For each $x_{0}\in B'_{r/2}$, we can write 
$$P_{x_{0}} (x):= A(x_{0})(x-x_{0}) + u(x_{0}) \quad \forall x\in \mathbb{R}^{n}.$$
We observe that by denoting $A(x_{0}) = (A_{T}(x_{0}), A_{n}(x_{0}))\in \mathbb{R}^{n-1}\times\mathbb{R}$ then for all $x\in\mathbb{R}^{n}$
$$ P_{x_{0}}(x) = A_{T}(x_{0})(x'-x'_{0}) + A_{n}(x_{0})x_{n} +u(x_{0})= P_{x_{0}}(x',0) + A_{n}(x_{0})x_{n}$$
\begin{equation}\label{relating-gradients-taylor-original-function}
 P_{x_{0}}(x',0) = A_{T}(x_{0})(x'-x'_{0}) + u(x_{0})
 \end{equation}
Now, by setting $v(x):=u(x)-P_{x_{0}}(x',0)$ we can rewrite expression (\ref{uniform-polynomial-control-bdry}) and (\ref{tangential-uniform-polynomial-control-bdry}) as:\\

 \noindent $\forall x_{0}\in \overline{B}'_{r/2}$  we have:
\begin{equation}\label{uniform-polynomial-control-bdry-rw} 
|v(x) - A_{n}(x_{0})x_{n}| \leq T |x-x_{0}|\omega(|x-x_{0}) \quad \forall x\in B_{r}^{+} \ \textnormal { such that } \  |x-x_{0}|\leq r_{0}. 
\end{equation}
\begin{equation}\label{tangential-uniform-polynomial-control-bdry-rw} 
|v(x',0)| \leq T |x'-x'_{0}|\omega(|x'-x'_{0})\quad \forall (x',0) \in B'_{r} \ \textnormal { such that } \  |x'-x'_{0}|\leq r_{0}. 
\end{equation}
Suppose $x_{0},y_{0}\in \overline{B}'_{r/2}$ are such that $d_{0}:=|x_{0}-y_{0}|> r_{0}/4.$ Then, 
\begin{equation}\label{holder-away-bdry}
 \frac{|A_{n}(x_{0})-A_{n}(y_{0})|}{\omega(|x_{0}-y_{0}|)}\leq 2(\omega(r_{0}/4))^{-1}||A||_{L^{\infty}(\overline{B}_{r/2})}.
 \end{equation}
Now, assume $x_{0},y_{0}\in \overline{B}'_{r/2}$ with $2d_{0}=2|x_{0}-y_{0}|\leq r_{0}/2.$ Set $z_{0}:=(x_{0}+y_{0})/2 \in \overline{B}'_{r/2}.$ Clearly, $|z_{0}-x_{0}|=|z_{0}-y_{0}|=d_{0}/2$. This way, $\overline{B}_{d_{0}/2}^{+}(z_{0})\subset \overline{B}_{d_{0}}^{+}(x_{0})\cap \overline{B}_{d_{0}}^{+}(y_{0})\subset B_{r}^{+}.$ By assumption (\ref{uniform-polynomial-control-bdry-rw}) we have,
\begin{equation}\label{control-in-x_{0}-bdry}
||v- A_{n}({x_{0}})x_{n}||_{L^{\infty}(\overline{B}_{\frac{d_{0}}{2}}^{+}(z_{0}))} \leq  ||v - A_{n}({x_{0}})x_{n}||_{L^{\infty}(\overline{B}_{d_{0}}^{+}(x_{0}))}\leq T d_{0} \omega(d_{0}). 
\end{equation}
\begin{equation}\label{control-in-y_{0}-bdry}
||v - A_{n}({y_{0}})x_{n}||_{L^{\infty}(\overline{B}_{\frac{d_{0}}{2}}^{+}(z_{0}))} \leq  ||v - A_{n}({y_{0}})x_{n}||_{L^{\infty}(\overline{B}_{d_{0}}^{+}(y_{0}))}\leq T d_{0} \omega(d_{0}).
\end{equation}
Now, recall that for a function of the type $z_{\mu}(x)=\mu\cdot x_{n} \ (\mu\in \mathbb{R})$  we always have for any $a\in\mathbb{R}^{n-1}$ that 
$$||z_{\mu}||_{L^{\infty}(B_{r}(a,0))}= ||z_{\mu}||_{L^{\infty}(B_{r}^{+}(a,0))}, \quad  |\mu| \leq \overline{C}\cdot \frac{||z_{\mu}||_{L^{\infty}(B_{r}^{+}(a,0))}}{r},$$
\noindent by gradient estimates since $z_{\mu}$ is a harmonic function.  Here, $\overline{C}=\overline{C}(n).$ \\

\noindent In particular, by (\ref{control-in-x_{0}-bdry}) and (\ref{control-in-y_{0}-bdry}), we arrive at 
\begin{eqnarray}\label{estimating-gradient-close-points-bdry}
| A_{n}(x_{0})-A_{n}(y_{0})|&\leq &\frac{2{\overline{C}}}{d_{0}}\big\|(A_{n}(x_{0})-A_{n}(y_{0}))x_{n}\big\|_{L^{\infty}(\overline{B}_{d_{0}/2}^{+}(z_{0}))} \nonumber \\
 &\leq & \frac{2{\overline{C}}}{d_{0}} \Big( \big\| v - A_{n}(x_{0})x_{n}\big\|_{L^{\infty}(\overline{B}_{d_{0}}(x_{0}))} +  \big\| v - A_{n}(y_{0})x_{n}\big\|_{L^{\infty}(\overline{B}_{d_{0}}(y_{0}))} \Big) \nonumber \\
 & \leq & 4\overline{C}T\omega(d_{0}) =  4\overline{C}T\omega(|x_{0}-y_{0}|).
\end{eqnarray}
So, by (\ref{holder-away-bdry}) and (\ref{estimating-gradient-close-points-bdry})
\begin{equation}\label{seminorm-A_{n}}
 [A_{n}]_{C^{0,\omega}(\overline{B}'_{r/2})} \leq 4\overline{C}T + 2(\omega(r_{0}/4))^{-1}||A||_{L^{\infty}(\overline{B}_{r/2})}.
\end{equation}
Now, if we denote $\varphi(x')=u(x',0)$ and $\overline{P}_{x_{0}}(x')=P_{x_{0}}(x',0)$  for $x'\in B_{r}^{n-1}(0)$ where
$$B_{r}^{n-1}(0):=\big\{x'\in\mathbb{R}^{n-1}; (x',0)\in B'_{r}\big\},$$

\noindent We can actually rewrite  (\ref{tangential-uniform-polynomial-control-bdry-rw}) as
$$|\varphi(x') - \overline{P}_{x_{0}}(x')| \leq T |x'-x'_{0}|\omega(|x'-x'_{0})\quad \forall x'\in B_{r}^{n-1}(0) \ \textnormal { such that } \  |x'-x'_{0}|\leq r_{0}. 
 $$
 By Lemma \ref{taylor-1}, we have $\varphi\in C^{1,\omega}(\overline{B}_{r/2})$. From  (\ref{relating-gradients-taylor-original-function}) we arrive to
 $$\nabla \varphi(x_{0})= \nabla \overline{P}_{x_{0}}(x_{0})=A_{T}(x_{0}).$$
 This way, from the estimates (\ref{holder-semi-norm-mixed}), (\ref{holder-semi-norm-pure}) and (\ref{seminorm-A_{n}})
  \begin{eqnarray*}
[A]_{C^{0,\omega}(\overline{B}'_{r/2})} &=& [A_{T}+A_{n}\cdot e_{n}]_{C^{0,\omega}(\overline{B}'_{r/2})}\\\\
 &\leq& [A_{T}]_{C^{0,\omega}(\overline{B}'_{r/2})}  + [A_{n}]_{C^{0,\omega}(\overline{B}'_{r/2})}  \\\\
 &\leq& [\nabla \varphi]_{C^{0,\omega}(\overline{B}'_{r/2})} +4\overline{C}T + 2(\omega(r_{0}/4))^{-1}||A||_{L^{\infty}(\overline{B}_{r/2})}\\\\
 &\leq &   E(T + \omega(r_{0}/4)^{-1}||A_{T}||_{L^{\infty}(\overline{B}_{r/2})}) + 4\overline{C}T + 2(\omega(r_{0}/4))^{-1}||A||_{L^{\infty}(\overline{B}_{r/2})}\\\\
 &\leq & (E+4\overline{C})T + 3\omega(r_{0}/4)^{-1}||A||_{L^{\infty}(\overline{B}_{r/2})}.
\end{eqnarray*}
Futhremore, by (\ref{holder-semi-norm-mixed}) and (\ref{estimating-gradient-close-points-bdry}), we obtain
\begin{eqnarray*}
[A]_{C^{\alpha}(\overline{B}'_{r_{0}/8})} &=& [A_{T}]_{C^{\alpha}(\overline{B}'_{r_{0}/8})} +[A_{n}]_{C^{\alpha}(\overline{B}'_{r_{0}/8})} \\\\
&\leq &[\nabla\varphi]_{C^{\alpha}(\overline{B}'_{r_{0}/8})} + 4\overline{C}T\\\\
&\leq & ET +4\overline{C}T.
\end{eqnarray*}
Uniqueness of $A$ follows easily.
\end{proof}
\section{Acknowledgements}
\noindent The authors are thankful to Luis Caffarelli for sharing thoughts, ideas and comments about the topics treated in this paper.  The authors are grateful to O. Savin for nice suggestions and for bringing Lemma \ref{lemma-control-by-the-distance} to their attention. The authors would like to thank B. Sirakov for kindly pointing out to them how his results in \cite{Boyan-1, Boyan-2} relate to IHOL discussed here and in \cite{BM-IHOL-PartI}. The first author was supported by CAPES (Brazil) by grant PNPD-1412370.  The second author was supported by CNPq (Brazil) by grants  PQ-310986/2013-3 and Universal-447536/2014-1. The third author was partially supported by NSFC (China) by grant 11371249.

\end{document}